\newcommand{\normmm}[1]{{\left\vert\kern-0.25ex\left\vert\kern-0.25ex\left\vert #1
    \right\vert\kern-0.25ex\right\vert\kern-0.25ex\right\vert}}
\begin{document}
	\newtheorem{thm}{Theorem}
	\newtheorem{lem}[thm]{Lemma}
    \title{Robust superconvergence analysis of physics-preserving RMAC scheme for the Stokes and Navier--Stokes equations on non-uniform grids at high Reynolds numbers
\thanks{This work is supported in part by the National Natural Science Foundation of China (Grant Nos.12271302, 12131014) and Shandong Provincial Natural Science Foundation for Outstanding Youth Scholar (Grant No. ZR2024JQ030). Authors are sorted alphabetically.}}

    \author{ Binghong Li \thanks{School of Mathematics, Shandong University, Jinan, Shandong, 250100, P.R. China. Email: binghongsdu@163.com}.
        \and Xiaoli Li \thanks{Corresponding Author. School of Mathematics, Shandong University, Jinan, Shandong, 250100, P.R. China. Email: xiaolimath@sdu.edu.cn}.
        \and Xu Li \thanks{Eastern Institute for Advanced Study, Eastern Institute of Technology,
		  Ningbo, Zhejiang, 315200, P. R. China. Email: xulisdu@126.com}.
        \and Hongxing Rui \thanks{ School of Mathematics, Shandong University, Jinan, Shandong, 250100, P.R. China. Email: hxrui@sdu.edu.cn}.
}
\graphicspath{{figures/},}
\maketitle

\begin{abstract}
The velocity errors of the classical marker and cell (MAC) scheme are dependent 
on the pressure approximation errors, which is non-pressure-robust and will cause the accuracy of the velocity approximation to deteriorate 
when the pressure approximation is poor.
In this paper, we first propose the reconstructed MAC scheme (RMAC) based on the finite volume method to obtain the 
pressure-robustness for the time-dependent Stokes equations
and then construct the $\mu$-robust and physics-preserving RMAC 
scheme on non-uniform grids for the Navier--Stokes equations, where $\mu$-robustness means that the velocity errors do not blow up for small viscosity $\mu$ when the true velocity is sufficiently smooth. 
Compared with the original MAC scheme, which was analyzed in \textit{[SIAM J. Numer. Anal. 55 (2017): 1135-1158]}, 
the RMAC scheme is different only on the right-hand side for Stokes equations. 
It can also be proved that the constructed scheme satisfies the local mass conservation law, the discrete unconditional energy dissipation law, 
the momentum conservation, and the angular momentum conservation for the Stokes and Navier--Stokes equations. Furthermore, 
by constructing the new auxiliary function depending on the velocity and using the high-order consistency analysis, 
we can obtain the pressure-robust and $\mu$-robust error estimates for the velocity and derive the second-order superconvergence  for the velocity and pressure in the discrete $l^{\infty}(l^2)$ norm 
on non-uniform grids and the discrete $l^{\infty}(l^{\infty})$ norm on uniform grids. 
Finally, numerical experiments using the constructed schemes are demonstrated to show the robustness for our constructed schemes.

\end{abstract}

 \begin{keywords}
MAC scheme, Stokes equations, pressure-robust, physics-preserving, superconvergence
 \end{keywords}
   \begin{AMS}
65N06, 65N08, 65N12, 65N15, 62G35
    \end{AMS}

\section{Introduction}
The Stokes and Navier--Stokes equations describe the motion of incompressible viscous flow and constitute cornerstone models in continuum mechanics. Given the inherent challenges in obtaining analytical solutions, numerical methods are often used to solve the Stokes and Navier--Stokes equations (cf. \cite{han2023analysis, qiu2024h}). There are many methods in this field, including the finite difference method, the finite volume method, and the finite element method (see, e.g., \cite{girault1979finite, glowinski2003finite, li2022finite, li2022new}
and the references therein). Among these methods, one of the best and simplest is the marker-and-cell (MAC) method, which was proposed in Girault and Lopez \cite{girault1996finite}. Han and Wu have also acknowledged in \cite{han1998new} that the MAC scheme is one of the simplest and most effective numerical schemes for solving the Stokes equations and the Navier--Stokes equations. The MAC scheme enforces the incompressibility constraint of the velocity field at the discrete level while ensuring exact local conservation of mass, momentum, and kinetic energy.

There have been many theoretical error analyses of MAC schemes for the Stokes and Navier--Stokes equations. It was proved in \cite{girault1996finite,han1998new,kanschat2008divergence,Nicolaides1992Analysis} that the MAC scheme achieved first-order convergence for both velocity in the $H^1$ norm and pressure in the $L^2$ norm on uniform grids. However, numerical examples in \cite{Nicolaides1992Analysis} demonstrate that both velocity and pressure exhibit second-order accuracy. In 2015, Li and Sun \cite{li2015superconvergence} investigated the second-order convergence of the velocity in $L^2$ norm for stationary Stokes equations on non-uniform grids under the unproven assumption that the pressure exhibits second order convergence. To fill this gap,  Li and Rui \cite{Rui2016} established the second-order superconvergence of both velocity and pressure for stationary Stokes equations on non-uniform grids by innovative construction of an auxiliary function depending on the velocity and discretizing parameters, with subsequent extensions to time-independent Stokes equations in \cite{LI20181499}. As a prominent spatial discretization method, the MAC scheme has been widely adopted across diverse fluid models, particularly demonstrating high efficiency in complex Navier-Stokes systems. In
\cite{li2018superconvergence,li2020error,li2022superconvergence}, MAC schemes for Navier--Stokes equations were developed, achieving the second-order superconvergence in the $L^2$ norm for both velocity and pressure. In addition, Li and Shen \cite{li2020sav} established second-order error estimates using MAC scheme in spatial discretization for the Cahn--Hilliard--Navier--Stokes phase-field model. Furthermore, Dong and Ying \cite{dong2023second} proposed the modified MAC scheme for Stokes interface problems, achieving second-order accuracy on uniform grids. 

We would like to remark that in all aforementioned proofs,  the velocity error estimation remains dependent on the exact pressure solution, which makes the results not robust to the pressure.
The pressure-robustness in mixed finite elements is thoroughly discussed in \cite{John2,PL1}, with emphasis on its critical importance for discrete methods. 
It is pointed out in, e.g., \cite{PL1,LM:2016} that an improved discretization of the right-hand side of the momentum equation can restore pressure-robustness for the classical non-pressure-robust mixed finite element methods such as
Taylor--Hood elements and Bernardi--Raugel elements.
Besides, it is proved that the MAC scheme can be naturally transformed into a finite volume method and a mixed finite element method (cf. \cite{han1998new,1996analysis}).This motivates our investigation into whether there is such an improved discretization for the MAC scheme. 

As we all know, finite element methods, which preserve exact enforcement of divergence-free condition, are pressure-robust(see, e.g., \cite{John2}). In contrast, while the MAC scheme rigorously enforces discrete divergence-free conditions for numerical velocity (matching the continuous divergence-free constraint of exact solutions), the core analytical challenge stems from the inherent non-equivalence between discrete divergence-free conditions and their continuous counterparts. This discrepancy induces pressure non-robustness and leads to accuracy degradation in the error estimates for the $\mu$-robustness (cf. \cite{garcia2021convergence, ahmed2018really}). Consequently, the pressure-robustness theories developed for finite elements cannot be directly generalized to MAC schemes, as the latter's inherent non-pressure-robust property stems from this discrete-continuous operator mismatch.
In addition, the introduction of a velocity-dependent auxiliary function (as demonstrated in \cite{Rui2016,li2018superconvergence}) becomes essential to attain superconvergence properties on non-uniform grids. However, such auxiliary functions inherently introduce fundamental challenges to the proof of $\mu$-robustness. The key innovation of this work lies in addressing pressure-robust and $\mu$-robust error estimates while preserving second-order superconvergence  for the velocity and pressure on non-uniform grids. Our main contributions in this paper include the following parts:
\begin{itemize}
	\item Based on the finite volume method, we propose the reconstructed MAC scheme (RMAC) for the Stokes and Navier–Stokes equations and prove that the constructed scheme preserves an important invariance property of the continuous model: the velocity solution remains unchanged when an arbitrary gradient field is added to the right-hand side of the momentum equation. Inspired by this result, we construct the new splitting method to decompose the original model into two sub-problems to obtain the pressure-robust estimate for the velocity.
	\item The constructed RMAC scheme can satisfy local mass conservation law, the discrete unconditional energy dissipation law, momentum conservation, and angular momentum conservation for the Stokes and Navier--Stokes equations. 
	\item  We can obtain the pressure-robust and $\mu$-robust error estimate for the velocity and derive the second-order superconvergence for the velocity and pressure in the discrete $l^{\infty}(l^2)$ norm on non-uniform grids and the discrete $l^{\infty}(l^{\infty})$ norm on uniform grids by constructing the new auxiliary function depending on the velocity and using the high-order consistency analysis.
	
\end{itemize}

The paper is organized as follows. In Section 2 we give the model along with some preliminaries. In Section 3 we propose the RMAC scheme for the time-dependent Stokes equations to achieve pressure-robustness, establishing its stability properties. In Section 4, we demonstrate rigorous error analysis of both vilocity and pressure for $\mu$-robustness and the second-order supercovergence. In Section 5 we show some discrete conservation properties for the Stokes and Navier--Stokes equations. In Section 6 we perform numerical experiments comparing RMAC with classical MAC schemes, validating pressure-robustness and $\mu$-robustness of velocity $\mathbf{u}$ along with pressure $p$ approximations. The convergence rates are in agreement with the theoretical analysis. Technical details supporting the error estimates are provided in the Appendix.

Throughout the paper we use $C$, with or without subscript, to denote a positive constant, which could have different values at different appearances.
\section{The Problem and Some Preliminaries}
In this section, we first consider the time-dependent Stokes equations for an incompressible fluid in a two-dimensional spatial domain, subject to homogeneous boundary conditions and a homogeneous initial condition. For non-zero initial data scenarios, the problem can be reformulated by incorporating the initial data into the source term, thereby achieving a homogeneous initial condition.

Find the pressure $p$ and the velocity vector $\mathbf{u}=(u^x,u^y)$ such that
\begin{equation}\label{e1}
\left\{
\begin{array}{l}
\displaystyle\frac{\partial \mathbf{u}}{\partial t}-\mu \Delta \mathbf{u}+\nabla p=\mathbf{g},~~~~(x,y,t) \in \Omega\times J,\\
\displaystyle\nabla\cdot \mathbf{u}=0,~~~~~~~~~~~~~~~~~~(x,y,t) \in\Omega\times J,\\
\displaystyle\mathbf{u}=0,~~~~~~~~~~~~~~~~~~~~~~(x,y,t) \in \partial \Omega\times J,\\
\mathbf{u}(x,y,0) = 0,~~~~~~(x,y) \in \Omega.
\end{array}
\right.
\end{equation}
Here $\mu>0$, represents the viscosity coefficient of the fluid. $\mathbf{g}=(g^x,g^y)\in (L^2(\Omega))^2$, represents the source term. For simplicity, we take $\Omega=(0,L_x)\times(0,L_y)$ as a rectangular domain.  $J=(0,T]$, and T denotes the final time.

Firstly, we give the partitions and notations as follows. For $n\in \mathbb{N}\bigcup \{0\}$, we set $\Delta t = T/N$ and define $t^n = n\Delta t$. The two dimensional domain $\Omega$ is partitioned by $\delta_{x}\times\delta_{y}$, where
\begin{equation*}
	\aligned
	\delta_{x}:0=x_{0}<x_{1}<\cdot\cdot\cdot<x_{N_{x}-1}<x_{N_{x}}=L_x,~~\delta_{y}:0=y_{0}<y_{1}<\cdot\cdot\cdot<y_{N_{y}-1}<y_{N_{y}}=L_y,
	\endaligned
\end{equation*}
and any coordinates out of the physical domain are treated as their neighboring values.

For $i=1,\cdot\cdot\cdot,N_x$ and $j=1,\cdot\cdot\cdot,N_y$,
define
\begin{equation*}
\aligned
&x_{i-1/2}=\frac{x_{i}+x_{i-1}}{2},~h_{i-1/2}=x_{i}-x_{i-1},~ h=\max\limits_{i}{h_{i-1/2}},\\
&y_{j-1/2}=\frac{y_{j}+y_{j-1}}{2},~k_{j-1/2}=y_{j}-y_{j-1},~ k=\max\limits_{j}{k_{j-1/2}},\\
&h_{i}=\frac{h_{i+1/2}+h_{i-1/2}}{2},~k_{j}=\frac{k_{j+1/2}+k_{j-1/2}}{2},\\
&\Omega_{i+1/2,j+1/2}=(x_i,x_{i+1})\times (y_j,y_{j+1}).
\endaligned
\end{equation*}
It is clear that
$$h_0=\frac{h_{1/2}}{2},~h_{N_x}=\frac{h_{N_x-1/2}}{2},~k_0=\frac{k_{1/2}}{2},~k_{N_y}=\frac{k_{N_y-1/2}}{2}.$$
We suppose the partition is regular, which {\color{black}means that} there is a positive constant $C_0$ such that
\begin{equation}\label{e2}
\min\limits_{i,j}\{h_{i+1/2},k_{j+1/2}\}\geq C_0 \max\limits_{i,j}\{h_{i+1/2},k_{j+1/2}\}.
\end{equation}

For a function $\phi(x,y,t)$, let $\phi_{l,m}^n$ denote $\phi(x_l,y_m,t^n)$, where $l$ may take values $i,~i+1/2$ for integer $i$, and $m$ may take values $j,~j+1/2$ for integer $j$. For discrete functions, define
\begin{equation}\label{e3}
\left\{
\begin{array}{lll}
[d_x\phi]_{i+1/2,m}^n=\displaystyle\frac{\phi_{i+1,m}^n-\phi_{i,m}^n}{h_{i+1/2}},
\quad[D_y\phi]_{l,j+1}^n=\displaystyle\frac{\phi_{l,j+3/2}^n-\phi_{l,j+1/2}^n}{k_{j+1}},\\

[d_y\phi]_{l,j+1/2}^n=\displaystyle\frac{\phi_{l,j+1}^n-\phi_{l,j}^n}{k_{j+1/2}},~~~
\quad[D_x\phi]_{i+1,m}^n=\displaystyle\frac{\phi_{i+3/2,m}^n-\phi_{i+1/2,m}^n}{h_{i+1}},\\

[d_{t}\phi]_{l,m}^n=\displaystyle\frac{\phi_{l,m}^{n}-\phi_{l,m}^{n-1}}{\Delta t}.
\end{array}
\right.
\end{equation}

For functions $f(x,y),g(x,y),$ define $L^{2}$ inner products and norms

$$(f,g)=\int_{\Omega}f(x,y)g(x,y)dxdy,\quad||f||_{L^2}=\sqrt{(f,f)}.$$

Define the discrete $l^2$ inner products and norms as follows,\\
\begin{equation*}
\aligned
&(f,g)_{l^2,M}=\sum\limits_{i=0}^{N_{x}-1}\sum\limits_{j=0}^{N_{y}-1}h_{i+1/2}k_{j+1/2}f_{i+1/2,j+1/2}g_{i+1/2,j+1/2},\\
&(f,g)_{l^2,T_x}=\sum\limits_{i=0}^{N_{x}}\sum\limits_{j=1}^{N_{y}-1}h_{i}k_{j}f_{i,j}g_{i,j},~(f,g)_{l^2,T_y}=\sum\limits_{i=1}^{N_{x}-1}\sum\limits_{j=0}^{N_{y}}h_{i}k_{j}f_{i,j}g_{i,j},\\
&~~~~~~~||f||_{l^2,\xi}=(f,f)_{l^2,\xi},\quad \xi=M, T_x, T_y.
\endaligned
\end{equation*}
Moreover, we define
\begin{equation*}
\aligned
&(f,g)_{l^2,T,M}=\sum\limits_{i=1}^{N_{x}-1}\sum\limits_{j=0}^{N_{y}-1}h_{i}k_{j+1/2}f_{i,j+1/2}g_{i,j+1/2},\\
&(f,g)_{l^2,M,T}=\sum\limits_{i=0}^{N_{x}-1}\sum\limits_{j=1}^{N_{y}-1}h_{i+1/2}k_{j}f_{i+1/2,j}g_{i+1/2,j},\\
&||f||_{l^2,T,M}=(f,f)_{l^2,T,M},\quad ||f||_{l^2,M,T}=(f,f)_{l^2,M,T}.
\endaligned
\end{equation*}
Finally define the discrete $H^1$ norm and discrete $l^2$ norm of a vectored-valued function $\textbf{u}$,
\begin{align}
 \|D\mathbf{u}\|_{l^2}^2&=\|d_xu^x\|_{l^2,M}^2+\|D_yu^x\|_{l^2,T_y}^2+\|D_xu^y\|_{l^2,T_x}^2+\|d_yu^y\|_{l^2,M}^2.
\label{e4}\\
\|\mathbf{u}\|_{l^2}^2&=\|u^x\|_{l^2,T,M}^2+\|u^y\|_{l^2,M,T}^2.\label{e5}
\end{align}

\section{Pressure-Robust Reformulation and the Stability}
In this section, we formally present the concept of pressure robustness and $\mu$-robustness and reformulate the original model into an equivalent weak formulation, essential to ensure pressure-robustness error estimates. Subsequently, we establish the RMAC scheme and prove its energy stability through rigorous analysis.
\subsection{Pressure Robustness and {$\mu$-robustness}}
 The velocity $\mathbf{u}$ of the time-independent Stokes equation has an important invariance related to the gradient field: Under the premise that the boundary conditions do not involve pressure, adding a gradient field force $\nabla \phi \ (\phi \in H^1(\Omega))$ to the right side of the first equation of (\ref{e1}) will only cause changes in pressure rather than velocity. This property can be succinctly described by the following equation:
\begin{equation}\label{e24}
\mathbf{g} \to \mathbf{g} + \nabla \phi \Rightarrow (\mathbf{u},p) \to (\mathbf{u},p+\phi).
\end{equation}
If the discrete method can preserve this invariance well, it can be proved that the velocity approximation error solved by this method does not depend on pressure, which is called pressure robustness \cite{John2}.

However, the result we have obtained in \cite{LI20181499} is:
\begin{equation}\label{e23}
\aligned
\displaystyle
\|\mathbf{W}^{m}-\mathbf{u}^{m}\|_{l^2}\leq C(\frac{1}{\mu},\mu)(h^2 + k^2 + \Delta t)(\|\mathbf{u}\|_{L^{\infty}(J;W^{4,\infty}(\Omega))} + \|p\|_{L^{\infty}(J;W^{3,\infty}(\Omega))}),
\endaligned
\end{equation}
for any $m \geq 1$. Our analysis reveals that the velocity error estimates exhibit dependence on the exact pressure solution. Furthermore, these estimates demonstrate direct proportionality to $1/\mu$, resulting in significant error magnification for $\mu \ll 1$. For the Navier--Stokes equations, eliminating the $1/\mu$ error scaling yields enhanced accuracy in high-Reynolds-number regimes (see \cite{GJN21} and the references therein), a property alternatively termed $Re$-semi-robustness or convection-robustness in contemporary literature.
Therefore, the error estimate of velocity that does not depend on pressure and directly on $1/\mu$, is the goal of this work.
\subsection{The RMAC scheme}
The model (\ref{e1}) can be transformed into the following classical one directly:
\begin{equation}\label{e20}
\left\{
\begin{array}{l}
\displaystyle\frac{\partial {u}^x}{\partial t}-\mu \frac{\partial^2{u}^x}{\partial x^2}-
\mu \frac{\partial^2{u}^x}{\partial y^2}+\frac{\partial p}{\partial x}=g^x,~~~~(x,y,t) \in \Omega\times J,\\
\displaystyle\frac{\partial {u}^y}{\partial t}-\mu \frac{\partial^2{u}^y}{\partial x^2}-
\mu \frac{\partial^2{u}^y}{\partial y^2}+\frac{\partial p}{\partial y}=g^y,~~~~(x,y,t) \in \Omega\times J,\\
\displaystyle \frac{\partial {u}^x}{\partial x}+\frac{\partial {u}^y}{\partial y}=0,~~~~~~~~~~~~~~~~~~~~~~~~~~(x,y,t) \in \Omega\times J,\\
\displaystyle{\mathbf{u}}=0,~~~~~~~~~~~~~~~~~~~~~~~~~~~~~~~~~~~~(x,y,t) \in \partial \Omega\times J,\\
\displaystyle{\mathbf{u}}(x,y,0) = 0,~~~~~~~~~~~~~~~~~~~~~(x,y,t) \in \Omega\times J.
\end{array}
\right.
\end{equation}

In order to eliminate the error in the discrete equation, we define the following average operators $\mathscr{A}^x$ and $ \mathscr{A}^y$ based on the finite volume method, such that
\begin{equation*}
\aligned
&\mathscr{A}^x = \frac{1}{h_i} \int_{x_{i-1/2}}^{x_{i+1/2}} dx, \  \ 
\mathscr{A}^y = \frac{1}{k_j} \int_{y_{j-1/2}}^{y_{j+1/2}} dy. \\
\endaligned
\end{equation*}
By applying the average operators $\mathscr{A}^x$ and $\mathscr{A}^y$  to the first two equations in (\ref{e20}) at $y = y_{j+1/2}$ and $x = x_{i+1/2}$, respectively, we can get the reformulation as follows:
\begin{align}
&\frac{1}{h_i} \int_{x_{i-1/2}}^{x_{i+1/2}} \frac{\partial u^x}{\partial t}(x,y_{j+1/2},t^n) dx - \frac{\mu}{h_i} \int_{x_{i-1/2}}^{x_{i+1/2}} \left(\frac{\partial^2 u^x}{\partial x^2}(x,y_{j+1/2},t^n) + \frac{\partial^2 u^x}{\partial y^2}(x,y_{j+1/2},t^n)\right) dx\notag\\
&~~~~~~~~~~~~~~~~~~~~~~~~~~~~~+D_x p^n_{i,j+1/2} = f^{x,n}_{i,j+1/2}, \ \ 1\leq i\leq N_x-1,0\leq j\leq N_y-1,\label{e21} \\
&\frac{1}{k_i} \int_{y_{j-1/2}}^{y_{j+1/2}} \frac{\partial u^y}{\partial t}(x_{i+1/2},y,t^n) dy - \frac{\mu}{k_i} \int_{y_{j-1/2}}^{y_{j+1/2}} \left(\frac{\partial^2 u^y}{\partial x^2}(x_{i+1/2},y,t^n) + \frac{\partial^2 u^y}{\partial y^2}(x_{i+1/2},y,t^n)\right)dy\notag\\
&~~~~~~~~~~~~~~~~~~~~~~~~~~~~~+D_y p^n_{i+1/2,j} = f^{y,n}_{i+1/2,j}, \ \ 0\leq i\leq N_x-1,1\leq j\leq N_y-1,\label{e22} 
\end{align}
where 
\begin{equation*}
\aligned
&f_{i,j+\frac{1}{2}}^{x,n} = \frac{1}{h_i} \int_{x_{i-1/2}}^{x_{i+1/2}} g^x(x,y_{j+1/2},t^n)dx,~f_{i+1/2,j}^{y,n} = \frac{1}{k_j} \int_{y_{j-1/2}}^{y_{j+1/2}} g^y(x_{i+1/2},y,t^n) dy.
\endaligned
\end{equation*}

Let $\{W^{x,n}_{i,j+1/2}\},~\{W^{y,n}_{i+1/2,j}\}$ and $\{Z^{n}_{i+1/2,j+1/2}\}$ denote the discrete approximations of $\{{u}^{x,n}_{i,j+1/2}\}, ~\{{u}^{y,n}_{i+1/2,j}\}$ and $\{p^{n}_{i+1/2,j+1/2}\}$, respectively, and set the boundary and initial value as follows:
\begin{equation}\label{e6}
\left\{
\begin{array}{l}
\displaystyle W_{0,j+1/2}^{x,n}=W_{N_x,j+1/2}^{x,n}=0,\quad 0\leq j\leq N_y-1,\\
\displaystyle W_{i,0}^{x,n}=W_{i,N_y}^{x,n}=0,~~~~~~~~~\quad 0\leq i\leq N_x,\\
\displaystyle W_{0,j}^{y,n}=W_{N_x,j}^{y,n}=0,~~~~~~~~~\quad 0\leq j\leq N_y,\\
\displaystyle W_{i+1/2,0}^{y,n}=W_{i+1/2,N_y}^{y,n}=0,~\quad 0\leq i\leq N_x-1,\\
W^{x,0}_{i,j+1/2} = 0,\quad 1\leq i\leq N_x-1,\quad 0\leq j\leq N_y - 1,\\
W^{y,0}_{i+1/2,j} = 0,\quad 0\leq i\leq N_x-1,\quad 1\leq j\leq N_y - 1.
\end{array}
\right.
\end{equation}

We find $\{W^{x,n}_{i,j+1/2}\},~\{W^{y,n}_{i+1/2,j}\}$ and $\{Z^{n}_{i+1/2,j+1/2}\}$ for $n \geq 1$ such that, 
\begin{align}
&d_{t}{W}^{x,n}_{i,j+1/2}-\mu \frac{d_xW^{x,n}_{i+1/2,j+1/2}-d_xW^{x,n}_{i-1/2,j+1/2}}{h_i}
-\mu \frac{D_yW^{x,n}_{i,j+1}-D_yW^{x,n}_{i,j}}{k_{j+1/2}}\notag\\
&~~~~~+D_xZ_{i,j+1/2}^n=f_{i,j+1/2}^{x,n},\ \ 1\leq i\leq N_x-1,0\leq j\leq N_y-1,\label{e7}\\
&d_{t}{W}^{y,n}_{i+1/2,j}-\mu \frac{D_xW^{y,n}_{i+1,j}-D_xW^{y,n}_{i,j}}{h_{i+1/2}}
-\mu \frac{d_yW^{y,n}_{i+1/2,j+1/2}-d_yW^{y,n}_{i+1/2,j-1/2}}{k_{j}}\notag\\
&~~~~~+D_yZ_{i+1/2,j}^n=f_{i+1/2,j}^{y,n},\ \ 0\leq i\leq N_x-1,1\leq j\leq N_y-1,\label{e8}\\
&d_xW^{x,n}_{i+1/2,j+1/2}+d_yW^{y,n}_{i+1/2,j+1/2}\notag\\
&~~~~~=0,\ \ 0\leq i\leq N_x-1,0\leq j\leq N_y-1.\label{e9}
\end{align}

Note that $D_x$ and $D_y$ above are exactly the difference operators acting on the pressure in the RMAC scheme. 
This inspires us to use $f_{i,j+\frac{1}{2}}^{x,n}$ and $f_{i+1/2,j}^{y,n}$ as discretizations of the right-hand side (see \eqref{e7} and \eqref{e8}).

Define 
\begin{align*}
\textbf{V}=H^1_0(\Omega)\times H^1_0(\Omega),  \qquad W=\left\{q\in L^2(\Omega): \int_\Omega qdx=0\right\}.
\end{align*}
Corresponding to the partition $\delta_{x} \times \delta_{y}$, we define $W_h$ as $\mathcal{P}_0$ finite element space with discrete zero-mean and $\mathbf{V_h}$ as $\mathcal{P}_1$ finite element space with zero-boundary, see \cite{Rui2016} for details. Define the operator $P_h: W \rightarrow W_h$ as
\begin{align*} 
	(P_h q)_{i+1/2,j+1/2} = q_{i+1/2,j+1/2} - (q, 1)_{l^2,M}/|\Omega|.
\end{align*}
We now prove that scheme \eqref{e7}-\eqref{e9} satisfies the following discrete invariance property.
\medskip
\begin{lem}\label{lem:discrete_invariance}
	Let $\{W^{x,n}_{i,j+1/2}\},~\{W^{y,n}_{i+1/2,j}\}$ and $\{Z^{n}_{i+1/2,j+1/2}\}$ be the solutions of \eqref{e7}-\eqref{e9}. The discrete velocity 
	satisfies the following invariance property:
	\begin{equation}\label{eq:discrte_invariance}
		\begin{aligned}
		&\mathbf{g} \to \mathbf{g} + \nabla \phi \Rightarrow \\
		&(\{W^{x,n}_{i,j+1/2}\},\{W^{y,n}_{i+1/2,j}\},\{Z^{n}_{i+1/2,j+1/2}\}) \to (\{W^{x,n}_{i,j+1/2}\},\{W^{y,n}_{i+1/2,j}\},\{(Z+P_h \phi)_{i+1/2,j+1/2}^n\}),
		\end{aligned}
	\end{equation}
	where $\mathbf{g} = (f_{i,j+\frac{1}{2}}^{x,n}, f_{i+1/2,j}^{y,n} ) =  (\frac{1}{h_i} \int_{x_{i-1/2}}^{x_{i+1/2}} g^x(x,y_{j+1/2},t^n)dx,  \frac{1}{k_j} \int_{y_{j-1/2}}^{y_{j+1/2}} g^y(x_{i+1/2},y,t^n) dy)$.
\end{lem}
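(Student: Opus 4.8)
The plan is to show that replacing $\mathbf{g}$ by $\mathbf{g}+\nabla\phi$ perturbs the discrete right-hand sides by exactly a discrete gradient of grid values of $\phi$, absorb that perturbation into the discrete pressure via $P_h\phi^n$, observe that the divergence constraint and the boundary/initial data only involve the velocity, and finally close the argument with the uniqueness of the discrete solution. In other words, \eqref{eq:discrte_invariance} is the faithful discrete shadow of the continuous invariance \eqref{e24}.

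First I would compute the perturbation of the right-hand sides. When $\mathbf{g}\to\mathbf{g}+\nabla\phi$, the data $f^{x,n}_{i,j+1/2}$ in \eqref{e7} is replaced by $f^{x,n}_{i,j+1/2}+\frac{1}{h_i}\int_{x_{i-1/2}}^{x_{i+1/2}}\partial_x\phi(x,y_{j+1/2},t^n)\,dx$, and $f^{y,n}_{i+1/2,j}$ in \eqref{e8} by $f^{y,n}_{i+1/2,j}+\frac{1}{k_j}\int_{y_{j-1/2}}^{y_{j+1/2}}\partial_y\phi(x_{i+1/2},y,t^n)\,dy$. By the fundamental theorem of calculus (and since $x_{i+1/2}-x_{i-1/2}=h_i$), these finite-volume averages telescope:
$$\frac{1}{h_i}\int_{x_{i-1/2}}^{x_{i+1/2}}\partial_x\phi(x,y_{j+1/2},t^n)\,dx=\frac{\phi^n_{i+1/2,j+1/2}-\phi^n_{i-1/2,j+1/2}}{h_i}=[D_x\phi]^n_{i,j+1/2},$$
and similarly the $y$-perturbation equals $[D_y\phi]^n_{i+1/2,j}$. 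This is precisely why the momentum equations are averaged with $\mathscr{A}^x,\mathscr{A}^y$ in \eqref{e21}--\eqref{e22}: the perturbation has exactly the structural form of the discrete pressure-gradient terms $D_xZ^n$ and $D_yZ^n$.

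Next I would absorb the perturbation into the pressure. Set $\widetilde{Z}^n_{i+1/2,j+1/2}:=Z^n_{i+1/2,j+1/2}+(P_h\phi^n)_{i+1/2,j+1/2}$. Since $P_h\phi^n$ differs from the grid values $\phi^n_{i+1/2,j+1/2}$ only by the additive constant $(\phi^n,1)_{l^2,M}/|\Omega|$, that constant cancels under the difference operators, so $D_x(P_h\phi^n)_{i,j+1/2}=[D_x\phi]^n_{i,j+1/2}$ and $D_y(P_h\phi^n)_{i+1/2,j}=[D_y\phi]^n_{i+1/2,j}$. Substituting the unchanged velocity $(\{W^{x,n}_{i,j+1/2}\},\{W^{y,n}_{i+1/2,j}\})$ together with $\{\widetilde{Z}^n\}$ into \eqref{e7}--\eqref{e8} then reproduces the perturbed equations term by term, because $D_x\widetilde{Z}^n=D_xZ^n+[D_x\phi]^n$ and $D_y\widetilde{Z}^n=D_yZ^n+[D_y\phi]^n$. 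The discrete divergence equation \eqref{e9} and the boundary/initial data \eqref{e6} involve only the velocity and thus hold verbatim, and $\widetilde{Z}^n\in W_h$ since $(P_h\phi^n,1)_{l^2,M}=0$ by construction and $Z^n\in W_h$ by the zero-mean normalization of the pressure. Hence $(\{W^{x,n}_{i,j+1/2}\},\{W^{y,n}_{i+1/2,j}\},\{\widetilde{Z}^n\})$ solves \eqref{e7}--\eqref{e9} with source $\mathbf{g}+\nabla\phi$.

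Finally I would invoke uniqueness: at each time level $n$, the scheme \eqref{e7}--\eqref{e9} is a square linear discrete saddle-point system whose solution is unique in the velocity and unique up to an additive constant in the pressure, the latter pinned down by requiring the pressure in $W_h$; this well-posedness follows from the discrete stability established in Section~3 (or from the classical MAC solvability argument). Consequently the solution of the perturbed scheme must coincide with the triple constructed above, which is exactly \eqref{eq:discrte_invariance}. I do not expect a substantial obstacle here: the only points needing care are this uniqueness statement and the bookkeeping that the zero-mean constant in $P_h\phi^n$ is annihilated by $D_x$ and $D_y$, so that the shift $Z^n\to Z^n+P_h\phi^n$ reproduces precisely the data perturbation; the rest is the telescoping identity above.
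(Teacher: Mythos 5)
Your proposal is correct and follows essentially the same route as the paper's proof: the same telescoping identity showing the finite-volume average of $\partial_x\phi$ (resp. $\partial_y\phi$) equals $D_x(P_h\phi)^n$ (resp. $D_y(P_h\phi)^n$) because the mean-value constant cancels under the difference operators, followed by verifying that the shifted pressure $Z^n+P_h\phi^n$ with the unchanged velocity solves the perturbed scheme, and concluding by the unique solvability of the MAC system. The extra bookkeeping you supply (that $\widetilde{Z}^n\in W_h$ and that the divergence and boundary conditions are untouched) is consistent with, and slightly more explicit than, the paper's argument.
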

\begin{proof}
	Let 
	\begin{align*} 
		c=(\phi(\cdot.\cdot, t^n),1)_{l^2,M}/|\Omega|
	\end{align*}
	be the discrete mean of $\phi(\cdot.\cdot, t^n)$, which may not be zero. Simple calculations yield
	\begin{align*}
		 \frac{1}{h_i} \int_{x_{i-1/2}}^{x_{i+1/2}} \frac{\partial \phi}{\partial x}(x,y_{j+1/2},t^n)dx = \frac{(\phi_{i+1/2,j+1/2}^n-c)-(\phi_{i-1/2,j+1/2}^n-c)}{h_i}=D_x(P_h\phi)_{i,j+1/2}^n,\\
		 \frac{1}{k_j} \int_{y_{j-1/2}}^{y_{j+1/2}} \frac{\partial \phi}{\partial y}(x_{i+1/2},y,t^n) dy = \frac{(\phi_{i+1/2,j+1/2}^n-c)-(\phi_{i+1/2,j-1/2}^n-c)}{k_j}=D_y(P_h\phi)_{{i+1/2},j}^n,
	\end{align*}
	which implies that $(\{W^{x,n}_{i,j+1/2}\},\{W^{y,n}_{i+1/2,j}\},\{(Z+P_h \phi)_{i+1/2,j+1/2}^n\})$ solves 
	\eqref{e7}-\eqref{e9} with $\mathbf{g}$ replaced by $\mathbf{g} + \nabla \phi$. 
	Then \eqref{eq:discrte_invariance} follows immediately from the unique solvability of the MAC scheme (see \cite{LI20181499,Rui2016}).  
	This completes the proof.
\end{proof}
\color{black}

\subsection{Discrete Pressure-Robustness}

Let $\mathbf{\beta} = \mathbf{g} - \nabla p$. Properties \eqref{e24} and \eqref{eq:discrte_invariance} demonstrate that $\mathbf{g}$ and $\mathbf{\beta}$ correspond to the same velocity solution in the continuous and discrete cases.
In addition, in order to solve the problem that the true solution of the pressure does not belong to the discrete pressure space and inspired by the discrete invariance property in Lemma \ref{lem:discrete_invariance}, we construct the new splitting method to decompose the problem \eqref{e1} as
\begin{equation}\label{superposition1}
\left\{
\begin{array}{l}
\displaystyle\frac{\partial \mathbf{u^{(1)}}}{\partial t}-\mu \Delta \mathbf{u^{(1)}}+\nabla p^{(1)}=\mathbf{\beta},~~~~(x,y,t) \in \Omega\times J,\\
\displaystyle\nabla\cdot \mathbf{u^{(1)}}=0,~~~~~~~~~~~~~~~~~~(x,y,t) \in\Omega\times J,\\
\displaystyle\mathbf{u^{(1)}}=0,~~~~~~~~~~~~~~~~~~~~~~(x,y,t) \in \partial \Omega\times J,\\
\mathbf{u^{(1)}}(x,y,0)=0,~~~~~~(x,y) \in \Omega.
\end{array}
\right.
\end{equation}
and
\begin{equation}\label{superposition2}
\left\{
\begin{array}{l}
\displaystyle\frac{\partial \mathbf{u^{(2)}}}{\partial t}-\mu \Delta \mathbf{u^{(2)}}+\nabla p^{(2)}=\nabla p,~~~~(x,y,t) \in \Omega\times J,\\
\displaystyle\nabla\cdot \mathbf{u^{(2)}}=0,~~~~~~~~~~~~~~~~~~(x,y,t) \in\Omega\times J,\\
\displaystyle\mathbf{u^{(2)}}=0,~~~~~~~~~~~~~~~~~~~~~~(x,y,t) \in \partial \Omega\times J,\\
\mathbf{u^{(2)}}(x,y,0)=0,~~~~~~(x,y) \in \Omega.
\end{array}
\right.
\end{equation}
where we can easily get that $\mathbf{u^{(1)}} = \mathbf{u},~p^{(1)} = 0,~\mathbf{u^{(2)}} = 0,~p^{(2)} = p$.

According to the reconstruction above, we can obtain that
\begin{align}
&\frac{1}{h_i} \int_{x_{i-1/2}}^{x_{i+1/2}} \frac{\partial u^{ x}}{\partial t}(x,y_{j+1/2},t^n) dx - \frac{\mu}{h_i} \int_{x_{i-1/2}}^{x_{i+1/2}} \left(\frac{\partial^2 u^{x}}{\partial x^2}(x,y_{j+1/2},t^n) + \frac{\partial^2 u^{x}}{\partial y^2}(x,y_{j+1/2},t^n)\right) dx\notag\\
&~~~~~~~~~~~~~~~~~~~~~~~~~~~~~ = \hat{\beta}^{x,n}_{i,j+1/2}, \ \ 1\leq i\leq N_x-1,0\leq j\leq N_y-1,\label{e21*} \\
&\frac{1}{k_i} \int_{y_{j-1/2}}^{y_{j+1/2}} \frac{\partial u^y}{\partial t}(x_{i+1/2},y,t^n) dy - \frac{\mu}{k_i} \int_{y_{j-1/2}}^{y_{j+1/2}} \left(\frac{\partial^2 u^y}{\partial x^2}(x_{i+1/2},y,t^n) + \frac{\partial^2 u^y}{\partial y^2}(x_{i+1/2},y,t^n)\right)dy\notag\\
&~~~~~~~~~~~~~~~~~~~~~~~~~~~~~ = \hat{\beta}^{y,n}_{i+1/2,j}, \ \ 0\leq i\leq N_x-1,1\leq j\leq N_y-1,\label{e22*} 
\end{align}
where $\displaystyle \hat{\beta}^{n}_{i,j+1/2} = \frac{1}{h_i} \int_{x_{i-1/2}}^{x_{i+1/2}} \beta^{x}(x,y_{j+1/2},t^n) dx,~\hat{\beta}^{y,n}_{i+1/2,j} = \frac{1}{k_j}\int_{y_{j-1/2}}^{y_{j+1/2}} \beta^{y}(x_{i+1/2},y,t^n) dy$.

Hence \eqref{e7}-\eqref{e9} can be divided into  
\begin{align}
&d_{t}{W}^{(1),x,n}_{i,j+1/2}-\mu \frac{d_xW^{(1),x,n}_{i+1/2,j+1/2}-d_xW^{(1),x,n}_{i-1/2,j+1/2}}{h_i}
-\mu \frac{D_yW^{(1),x,n}_{i,j+1}-D_yW^{(1),x,n}_{i,j}}{k_{j+1/2}}\notag\\
&~~~~~+D_xZ^{(1),n}_{i,j+1/2}=\hat{\beta}^{x,n}_{i,j+1/2},\ \ 1\leq i\leq N_x-1,0\leq j\leq N_y-1,\label{dise1}\\
&d_{t}{W}^{(1),y,n}_{i+1/2,j}-\mu \frac{D_xW^{(1),y,n}_{i+1,j}-D_xW^{(1),y,n}_{i,j}}{h_{i+1/2}}
-\mu \frac{d_yW^{(1),y,n}_{i+1/2,j+1/2}-d_yW^{(1),y,n}_{i+1/2,j-1/2}}{k_{j}}\notag\\
&~~~~~+D_yZ^{(1),n}_{i+1/2,j}=\hat{\beta}^{y,n}_{i+1/2,j},\ \ 0\leq i\leq N_x-1,1\leq j\leq N_y-1,\label{dise2}\\
&d_xW^{(1),x,n}_{i+1/2,j+1/2}+d_yW^{(1),y,n}_{i+1/2,j+1/2}\notag\\
&~~~~~=0,\ \ 0\leq i\leq N_x-1,0\leq j\leq N_y-1,\label{dise3} 
\end{align}
and
\begin{align}
&d_{t}{W}^{(2),x,n}_{i,j+1/2}-\mu \frac{d_xW^{(2),x,n}_{i+1/2,j+1/2}-d_xW^{(2),x,n}_{i-1/2,j+1/2}}{h_i}
-\mu \frac{D_yW^{(2),x,n}_{i,j+1}-D_yW^{(2),x,n}_{i,j}}{k_{j+1/2}}\notag\\
&~~~~~+D_xZ^{(2),n}_{i,j+1/2}=D_xp^{n}_{i,j+1/2},\ \ 1\leq i\leq N_x-1,0\leq j\leq N_y-1,\label{dise4}\\
&d_{t}{W}^{(2),y,n}_{i+1/2,j}-\mu \frac{D_xW^{(2),y,n}_{i+1,j}-D_xW^{(2),y,n}_{i,j}}{h_{i+1/2}}
-\mu \frac{d_yW^{(2),y,n}_{i+1/2,j+1/2}-d_yW^{(2),y,n}_{i+1/2,j-1/2}}{k_{j}}\notag\\
&~~~~~+D_yZ^{(2),n}_{i+1/2,j}=D_yp^n_{i+1/2,j},\ \ 0\leq i\leq N_x-1,1\leq j\leq N_y-1,\label{dise5}\\
&d_xW^{(2),x,n}_{i+1/2,j+1/2}+d_yW^{(2),y,n}_{i+1/2,j+1/2}\notag\\
&~~~~~=0.\ \ 0\leq i\leq N_x-1,0\leq j\leq N_y-1.\label{dise6} 
\end{align} 
Recalling \eqref{eq:discrte_invariance},  we have $\mathbf{W}^{(1),n}=\mathbf{W}^n$ and $\mathbf{W}^{(2),n}=0$ for all $n\in\mathbb{N}$.

Therefore, for velocity estimate we only need to prove
\begin{equation}
	\|\mathbf{W}^{(1),n}-\mathbf{u}^{(1),n}\| \leq C(h^2 +k^2 + \Delta t),
\end{equation}
where the positive constant $C$ is independent of $h,~k,~\Delta t$,   pressure $p$ and undisplayed dependent on $1/\mu$.

\subsection{Stability}
Motivated by duality arguments, we establish the following lemma for pressure error bounds, which formally parallels the discrete LBB condition \cite{Rui2016}. The derived lemma can be regarded as a framework with extendable capabilities to address limitations inherent in traditional LBB-based analyses, which will be left as subjects of future endeavor.

We first define the operator $\mathbf{I}_h: \mathbf{V} \to \mathbf{V}_h$, such that
\begin{equation*}
\mathbf{I}_h \mathbf{v} = (I^x_h v^x, I^y_h v^y),~\mathbf{v} = (v^x,v^y) \in \mathbf{V},
\end{equation*}
where
\begin{equation}
\aligned
(I^x_h v^x)_{i, j+1/2} = \frac{1}{k_{j+1/2}} \int_{y_j}^{y_{j+1}}v^x(x_i,y)dy, \ \ 
(I^y_h v^y)_{i+1/2, j} = \frac{1}{h_{j+1/2}} \int_{x_i}^{x_{i+1}}v^y(x,y_j)dx.
\endaligned
\end{equation}

%
%
%
\begin{lem}\label{dual-problem}
	For each $q \in W_h$, there exists $\mathbf{v} \in \mathbf{V}_h$ such that 
	\begin{equation}\label{div*}
	(d_x v^x + d_y v^y)_{i+1/2,j+1/2} = q_{i+1/2,j+1/2},~\|\mathbf{v}\|_{l^2}+\|D\mathbf{v}\|_{l^2} \leq C\|q\|_{l^2},
	\end{equation}
	where the positive constant $C$ is independent of $h$ and $k$. 
\end{lem}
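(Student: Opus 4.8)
The plan is to lift \eqref{div*} to the continuous level and then pull it back through the interpolation operator $\mathbf{I}_h$. Since every $q\in W_h$ is piecewise constant on the cells $\Omega_{i+1/2,j+1/2}$, its discrete mean equals its continuous mean, so $\int_\Omega q\,dx=0$ and $q\in W$. By the continuous inf--sup (LBB) condition for the Stokes pair $(\mathbf{V},W)$ --- equivalently, by the existence of a Bogovskii-type right inverse of $\nabla\cdot:\mathbf{V}\to W$ --- there is $\mathbf{w}=(w^x,w^y)\in\mathbf{V}=H^1_0(\Omega)^2$ with $\nabla\cdot\mathbf{w}=q$ a.e.\ and $\|\mathbf{w}\|_{H^1}\le C\|q\|_{L^2}$, where $C=C(\Omega)$ and $\|\cdot\|_{H^1}$ denotes the full $(H^1(\Omega))^2$ norm. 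Set $\mathbf{v}:=\mathbf{I}_h\mathbf{w}\in\mathbf{V}_h$; the vanishing trace of $\mathbf{w}$ on $\partial\Omega$ forces the homogeneous boundary values of $\mathbf{v}$, and the line traces $w^x(x_i,\cdot)$, $w^y(\cdot,y_j)$ appearing in the definition of $\mathbf{I}_h$ are well defined in $L^2$ because $\mathbf{w}\in(H^1(\Omega))^2$ in two space dimensions.

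For the divergence identity, unfolding the definitions of $d_x,d_y$ and of $I^x_h,I^y_h$ gives, on each cell,
\[
(d_xv^x+d_yv^y)_{i+1/2,j+1/2}=\frac{1}{h_{i+1/2}k_{j+1/2}}\Big[\int_{y_j}^{y_{j+1}}\big(w^x(x_{i+1},y)-w^x(x_i,y)\big)\,dy+\int_{x_i}^{x_{i+1}}\big(w^y(x,y_{j+1})-w^y(x,y_j)\big)\,dx\Big].
\]
By the divergence theorem on $\Omega_{i+1/2,j+1/2}$ the bracketed quantity equals $\int_{\Omega_{i+1/2,j+1/2}}\nabla\cdot\mathbf{w}\,dx=\int_{\Omega_{i+1/2,j+1/2}}q\,dx=h_{i+1/2}k_{j+1/2}\,q_{i+1/2,j+1/2}$, the last step using that $q$ is constant on the cell. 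Hence $(d_xv^x+d_yv^y)_{i+1/2,j+1/2}=q_{i+1/2,j+1/2}$.

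It remains to establish $\|\mathbf{v}\|_{l^2}+\|D\mathbf{v}\|_{l^2}\le C\|\mathbf{w}\|_{H^1}$ with $C$ independent of $h,k$. The \emph{aligned} pieces are immediate: the identity above shows that $(d_xI^x_hw^x)_{i+1/2,j+1/2}$ and $(d_yI^y_hw^y)_{i+1/2,j+1/2}$ are exactly the cell averages of $\partial_xw^x$ and $\partial_yw^y$ over $\Omega_{i+1/2,j+1/2}$, so Jensen's inequality gives $\|d_xI^x_hw^x\|_{l^2,M}\le\|\partial_xw^x\|_{L^2}$ and $\|d_yI^y_hw^y\|_{l^2,M}\le\|\partial_yw^y\|_{L^2}$. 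For the \emph{transverse} quotients $D_yI^x_hw^x$, $D_xI^y_hw^y$ and for the $l^2$ term $\|\mathbf{v}\|_{l^2}$ one needs discrete trace estimates bounding weighted sums of line/edge averages of $\mathbf{w}$ by $\|\mathbf{w}\|_{H^1}$: writing $(I^x_hw^x)_{i,j+1/2}=\frac{1}{k_{j+1/2}}\int_{y_j}^{y_{j+1}}w^x(x_i,y)\,dy$, one estimates the fiber norm through the one-dimensional inequality $\|w^x(x_i,\cdot)\|_{L^2(0,L_y)}^2\le C\big(\frac{1}{h_i}\int_{x_{i-1/2}}^{x_{i+1/2}}\|w^x(x,\cdot)\|_{L^2}^2\,dx+\|w^x\|_{L^2}\|\partial_xw^x\|_{L^2}\big)$, then sums against the weights $h_ik_{j+1/2}$, using the regularity assumption \eqref{e2} to interchange $h_i$ with the neighbouring $h_{i\pm1/2}$; $D_yI^x_hw^x$ is handled similarly after rewriting it as an averaged difference quotient of $w^x$ in $y$, and the $w^y$ terms symmetrically. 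These are precisely the interpolation-stability bounds established in \cite{Rui2016}, which I would invoke (or reproduce). Combining the three contributions yields the claim.

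The main obstacle is this last step: obtaining the discrete trace/interpolation estimates with constants uniform in the non-uniform grid. This is exactly where \eqref{e2} is essential --- it is needed to compare the averaging widths $h_i,k_j$ with the cell widths $h_{i\pm1/2},k_{j\pm1/2}$ and to keep the one-dimensional trace constants uniform --- and it is the only place where anything beyond the divergence theorem and elementary one-dimensional calculus enters the proof; the divergence identity and the aligned estimates are direct computations.
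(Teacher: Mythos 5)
Your proposal is correct and follows essentially the same route as the paper: solve $\nabla\cdot\mathbf{w}=q$ continuously with $\|\mathbf{w}\|_{H^1}\le C\|q\|_{L^2}$, set $\mathbf{v}=\mathbf{I}_h\mathbf{w}$, verify the divergence identity cell by cell via the divergence theorem, and conclude by $H^1$-stability of the interpolation (the paper cites this stability and adds a discrete Poincar\'e inequality for the $\|\mathbf{v}\|_{l^2}$ term, where you instead sketch direct trace estimates --- a minor packaging difference). Your write-up actually supplies more detail than the paper on the exactness of the discrete divergence and on where the mesh-regularity assumption \eqref{e2} enters.
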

\begin{proof}
	This is a standard duality argument for $q \in W_h$. Firstly, since the divergence operator is an isomorphism
	from $\mathbf{V}^\bot = \{\mathbf{v} \in H^1_0 |~\mathbf{u} \in H^1_0,~\nabla \cdot \mathbf{u} = 0,~(\nabla \mathbf{u},\nabla \mathbf{v}) = 0\}$ onto $\mathbf{W}$(cf. \cite{girault1986finite}), there exists $\mathbf{w} \in H^1_0$ such that
	\begin{equation}\label{dualeq}
	\aligned
	\nabla \cdot \mathbf{w} = q, \ \forall q \in W_h, \\
	\|\mathbf{w}\|_{H^1} \leq C_1\|q\|_{L^2},
	\endaligned
	\end{equation}
	Setting $\mathbf{v} = \mathbf{I}_h \mathbf{w} \in \mathbf{V}_h$, we find that
	\begin{equation}
	d_x v^x_{i+1/2,j+1/2} + d_y v^y_{i+1/2,j+1/2} = q_{i+1/2,j+1/2}.
	\end{equation}
	Hence we have \cite{gallouet2012w1} 
	\begin{equation}
	\|D \mathbf{v}\|_{l^2} \leq C_2 \|\mathbf{w}\|_{H^1},
	~~\|\mathbf{v}\|_{l^2} \leq C_3 \|D \mathbf{v}\|_{l^2},
	\end{equation}
	which derives that
	\begin{equation}
	\|\mathbf{v}\|_{l^2} + \|D \mathbf{v}\|_{l^2} \leq C\|q\|_{l^2},
	\end{equation}
	where $C = C_1C_2(C_3 + 1)$.
\end{proof}

It is easy to verify that the following discrete integration-by-part formula holds.
\medskip
\begin{lem}\label{le2}
	\cite{Rui2013Block} Let $q^{x,n}_{i+1/2,j+1/2},~q^{y,n}_{i+1/2,j+1/2},~V_{i,j+1/2}^{x,n}~and~ V_{i+1/2,j}^{y,n} $ be any values such that $V_{0,j+1/2}^{x,n}=V_{N_x,j+1/2}^{x,n}=V_{i+1/2,0}^{y,n}=V_{i+1/2,N_y}^{y,n}=0$, then
	$$(D_xq^{x,n},V^{x,n})_{l^2,T,M}=-(q^{x,n},d_xV^{x,n})_{l^2,M},$$
	$$(D_yq^{y,n},V^{y,n})_{l^2,M,T}=-(q^{y,n},d_yV^{y,n})_{l^2,M}.$$
\end{lem}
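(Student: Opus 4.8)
The identity is a discrete summation-by-parts (Abel summation), so the plan is to unfold both sides from the definitions, perform a single index shift, and discard the resulting end terms using the boundary hypotheses; both displays are proved the same way, and I would do the first one with the index $j$ and the time level $n$ held fixed, which collapses it to a one-dimensional statement in $i$.

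First I would write $(D_xq^{x,n},V^{x,n})_{l^2,T,M}=\sum_{i=1}^{N_x-1}\sum_{j=0}^{N_y-1}h_i k_{j+1/2}\,[D_xq^{x,n}]_{i,j+1/2}\,V^{x,n}_{i,j+1/2}$ and substitute $[D_xq^{x,n}]_{i,j+1/2}=(q^{x,n}_{i+1/2,j+1/2}-q^{x,n}_{i-1/2,j+1/2})/h_i$ from \eqref{e3}. The point is that the mesh weight $h_i$ cancels the difference-quotient denominator $h_i$; symmetrically, on the right-hand side the weight $h_{i+1/2}$ appearing in $(\cdot,\cdot)_{l^2,M}$ cancels the denominator of $[d_xV^{x,n}]_{i+1/2,j+1/2}$. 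After factoring out the common weight $k_{j+1/2}$ and summing over $j$, I am left (writing $q_{i+1/2}$ for $q^{x,n}_{i+1/2,j+1/2}$ and $V_i$ for $V^{x,n}_{i,j+1/2}$, with $j$ fixed) with the one-dimensional claim $\sum_{i=1}^{N_x-1}(q_{i+1/2}-q_{i-1/2})V_i=-\sum_{i=0}^{N_x-1}q_{i+1/2}(V_{i+1}-V_i)$. Next I would split the left sum, relabel $i\mapsto i+1$ in the part containing $q_{i-1/2}$, and observe that the two sides then agree up to the single end terms $q_{1/2}V_0$ and $q_{N_x-1/2}V_{N_x}$, which vanish because $V^{x,n}_{0,j+1/2}=V^{x,n}_{N_x,j+1/2}=0$. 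The second display follows verbatim after replacing $(x,i,h,D_x,d_x)$ and $(\cdot,\cdot)_{l^2,T,M}$ by $(y,j,k,D_y,d_y)$ and $(\cdot,\cdot)_{l^2,M,T}$, and invoking $V^{y,n}_{i+1/2,0}=V^{y,n}_{i+1/2,N_y}=0$.

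There is no genuine analytic obstacle here; the only care needed is bookkeeping — matching the interior summation range $i=1,\dots,N_x-1$ of $(\cdot,\cdot)_{l^2,T,M}$ against the range $i=0,\dots,N_x-1$ of $(\cdot,\cdot)_{l^2,M}$ so that the extra terms produced by the shift are exactly the boundary terms that the hypotheses annihilate. It is worth noting explicitly that the non-uniform weights $h_i$, $h_{i+1/2}$ (and $k_j$, $k_{j+1/2}$) are defined precisely so that $D_x$ is the negative $l^2$-adjoint of $d_x$ for these inner products, which is why the statement holds without modification on non-uniform grids.
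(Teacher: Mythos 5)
Your proof is correct: the weight cancellation, the index shift, and the identification of the surviving end terms $q_{1/2}V_0$ and $q_{N_x-1/2}V_{N_x}$ (annihilated by the boundary hypotheses) constitute exactly the standard summation-by-parts argument that the paper declares ``easy to verify'' and delegates to the cited reference \cite{Rui2013Block} without writing out. Nothing in your bookkeeping is off --- the ranges $i=1,\dots,N_x-1$ versus $i=0,\dots,N_x-1$ are matched correctly --- so there is nothing further to add.
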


\begin{thm}\label{stability}
	(Stability)
	For the RMAC scheme (\ref{e7})-(\ref{e9}) and supposing that $\Delta t$ is sufficient small, there exists a constant $C$ independent of the viscosity such that:
	\begin{align}
	\|\mathbf{W}^m\|^2_{l^2} &\leq C\sum_{n = 1}^{m} \Delta t \|\hat{\beta}^n\|^2_{l^2}, \ 1 \leq m \leq N,  \label{Sta1}\\
	\|Z^m\|^2_{l^2,M} &\leq C (\sum_{n = 2}^{m} \Delta t \|d_t\mathbf{f}^n\|^2_{l^2} + \mu\sum_{n = 1}^{m} \Delta t \|\mathbf{f}^n\|^2_{l^2} +\|\mathbf{f}^1\|^2_{l^2} +\|\mathbf{f}^m\|^2_{l^2}), 1 \leq m \leq N \label{Sta2}.
	\end{align}
\end{thm}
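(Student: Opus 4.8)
The plan is to establish \eqref{Sta1} by a discrete energy estimate applied to the split system \eqref{dise1}--\eqref{dise3}, and to establish \eqref{Sta2} by a duality argument built on Lemma~\ref{dual-problem}, after first deriving auxiliary a priori bounds on $\|D\mathbf{W}^n\|_{l^2}$ and on $\|d_t\mathbf{W}^n\|_{l^2}$.

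For \eqref{Sta1}, recall that $\mathbf{W}^{(1),n}=\mathbf{W}^n$ by Lemma~\ref{lem:discrete_invariance}, so it suffices to bound the solution of \eqref{dise1}--\eqref{dise3}. I would multiply \eqref{dise1} by $h_ik_{j+1/2}W^{(1),x,n}_{i,j+1/2}$ and \eqref{dise2} by $h_{i+1/2}k_jW^{(1),y,n}_{i+1/2,j}$, sum over the interior indices, and add. Using the elementary identity $2\Delta t\,(d_ta^n)a^n=(a^n)^2-(a^{n-1})^2+(a^n-a^{n-1})^2$, the time-difference term produces $\tfrac12 d_t\|\mathbf{W}^{(1),n}\|_{l^2}^2$ up to a nonnegative remainder; after discrete summation by parts the Laplacian terms give $\mu\|D\mathbf{W}^{(1),n}\|_{l^2}^2\ge0$, which I simply drop; and by Lemma~\ref{le2} together with \eqref{dise3} the pressure terms cancel, $(D_xZ^{(1),n},W^{(1),x,n})_{l^2,T,M}+(D_yZ^{(1),n},W^{(1),y,n})_{l^2,M,T}=-(Z^{(1),n},d_xW^{(1),x,n}+d_yW^{(1),y,n})_{l^2,M}=0$. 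I would then bound the right-hand side merely by Cauchy--Schwarz and Young's inequality, $(\hat\beta^{x,n},W^{(1),x,n})_{l^2,T,M}+(\hat\beta^{y,n},W^{(1),y,n})_{l^2,M,T}\le\tfrac12\|\hat\beta^n\|_{l^2}^2+\tfrac12\|\mathbf{W}^{(1),n}\|_{l^2}^2$ --- deliberately avoiding any discrete Poincar\'e or inverse inequality so that no factor of $\mu$ enters --- then multiply by $2\Delta t$, sum over $n=1,\dots,m$ with $\mathbf{W}^{(1),0}=0$, and close with the discrete Gronwall inequality (valid for $\Delta t$ small); this yields $\|\mathbf{W}^m\|_{l^2}^2\le C\sum_{n=1}^m\Delta t\|\hat\beta^n\|_{l^2}^2$ with $C$ depending only on $T$.

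For \eqref{Sta2}, three pieces will be combined. First, testing \eqref{e7}--\eqref{e8} with $d_tW^{x,n},d_tW^{y,n}$, treating the Laplacian term with $\mu\,a^n(a^n-a^{n-1})\ge\tfrac\mu2((a^n)^2-(a^{n-1})^2)$, using that the pressure term again cancels by \eqref{e9}, summing, and invoking $\mathbf{W}^0=0$ gives $\sum_{n=1}^m\Delta t\|d_t\mathbf{W}^n\|_{l^2}^2+\mu\|D\mathbf{W}^m\|_{l^2}^2\le\sum_{n=1}^m\Delta t\|\mathbf{f}^n\|_{l^2}^2$; in particular $\mu^2\|D\mathbf{W}^m\|_{l^2}^2\le\mu\sum_{n=1}^m\Delta t\|\mathbf{f}^n\|_{l^2}^2$, and testing \eqref{e7}--\eqref{e8} at the first step with $d_t\mathbf{W}^1$ (where $\mathbf{W}^0=0$ makes the Laplacian contribution nonnegative) gives $\|d_t\mathbf{W}^1\|_{l^2}\le\|\mathbf{f}^1\|_{l^2}$. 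Second, taking the time difference of \eqref{e7}--\eqref{e9}, the pair $(d_t\mathbf{W}^n,d_tZ^n)$ solves the same scheme for $n\ge2$ with source $d_t\mathbf{f}^n$ and stays discretely divergence-free; testing with $d_t\mathbf{W}^n$, summing from $n=2$ to $m$, using the first-step bound, and applying discrete Gronwall gives $\|d_t\mathbf{W}^m\|_{l^2}^2\le C(\|\mathbf{f}^1\|_{l^2}^2+\sum_{n=2}^m\Delta t\|d_t\mathbf{f}^n\|_{l^2}^2)$. Third, normalizing $Z^m$ so that $(Z^m,1)_{l^2,M}=0$ (the scheme fixes $Z^m$ only up to a constant), hence $Z^m\in W_h$, Lemma~\ref{dual-problem} supplies $\mathbf{v}\in\mathbf{V}_h$ with $d_xv^x+d_yv^y=Z^m$ and $\|\mathbf{v}\|_{l^2}+\|D\mathbf{v}\|_{l^2}\le C\|Z^m\|_{l^2,M}$; by Lemma~\ref{le2}, $\|Z^m\|_{l^2,M}^2=(Z^m,d_xv^x+d_yv^y)_{l^2,M}=-(D_xZ^m,v^x)_{l^2,T,M}-(D_yZ^m,v^y)_{l^2,M,T}$, and substituting the expressions for $D_xZ^m$ and $D_yZ^m$ read off from \eqref{e7}--\eqref{e8}, bounding the lower-order terms by Cauchy--Schwarz and the Laplacian terms by $C\mu\|D\mathbf{W}^m\|_{l^2}\|D\mathbf{v}\|_{l^2}$ after summation by parts, gives $\|Z^m\|_{l^2,M}\le C(\|\mathbf{f}^m\|_{l^2}+\|d_t\mathbf{W}^m\|_{l^2}+\mu\|D\mathbf{W}^m\|_{l^2})$. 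Squaring this and inserting the first two bounds produces \eqref{Sta2}.

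The main obstacle is the second piece above: differencing the scheme in time and closing the bound for $\|d_t\mathbf{W}^m\|_{l^2}$, which hinges on the homogeneous initial data $\mathbf{W}^0=0$ to avoid a spurious $\Delta t^{-1}$ factor in the first-step estimate, and which --- like the velocity bound --- requires $\Delta t$ small for the discrete Gronwall inequality. A secondary but essential point is the viscosity bookkeeping: one must resist using Poincar\'e or inverse estimates that would inject $\mu^{-1}$ into \eqref{Sta1}, and must track precisely the single power of $\mu$ multiplying $\sum_n\Delta t\|\mathbf{f}^n\|_{l^2}^2$ in \eqref{Sta2}.
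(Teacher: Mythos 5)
Your proposal is correct and follows essentially the same route as the paper: an energy estimate on the split system \eqref{dise1}--\eqref{dise3} (with the pressure term cancelling via discrete divergence-freeness and Gronwall closing the bound) for \eqref{Sta1}, and for \eqref{Sta2} the combination of the duality argument of Lemma~\ref{dual-problem} with the auxiliary bounds $\mu\|D\mathbf{W}^m\|_{l^2}^2\le C\sum_n\Delta t\|\mathbf{f}^n\|_{l^2}^2$ and the time-differenced estimate for $\|d_t\mathbf{W}^m\|_{l^2}$ seeded by the first-step bound $\|d_t\mathbf{W}^1\|_{l^2}\le\|\mathbf{f}^1\|_{l^2}$. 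The only differences are that you spell out the energy estimates the paper imports from \cite{LI20181499} and that you make explicit the zero-mean normalization of $Z^m$ required to invoke Lemma~\ref{dual-problem}, both consistent with (and slightly more careful than) the paper's writeup.
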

\begin{proof}
	Similarly to the stability results in \cite{LI20181499} and recalling (\ref{dise1})-(\ref{dise3}), we can easily obtain that
	$$\|\mathbf{W}^{(1),m}\|\leq C\sum_{n = 1}^{m} \Delta t \|\hat{\beta}^n\|^2_{l^2}.$$ 
	 Recalling the fact $\mathbf{W}^{m}=\mathbf{W}^{(1),m}$, we derive the desired result (\ref{Sta1}). In addition,  using the stability results in \cite{LI20181499} and noting (\ref{e7})-(\ref{e9}) yields
	\begin{equation}\label{sta}
	\aligned
	\mu \|D\mathbf{W}^m\|^2_{l^2}&\leq C \sum_{n = 1}^{m} \Delta t \|\mathbf{f}^n\|^2_{l^2}.
	\endaligned
	\end{equation}
	
	For the discrete function $\mathbf{v}^n \in \mathbf{V}_h$, (\ref{e7}) times $v^{x,n}_{i,j+1/2} h_ik_{j+1/2}$ and make summation on $i,~j$ for $i = 1, \dots ,N_x - 1,~j = 0, \dots , N_y - 1$. (\ref{e8}) times $v^{y,n}_{i+1/2,j} h_{i+1/2}k_j$ and make summation on $i,~j$ for $i = 0, \dots ,N_x - 1,~j = 1, \dots , N_y - 1$. Adding them results in
	\begin{equation}\label{d_t}
	\aligned
	&(d_t W^{x,n}, v^{x,n})_{l^2,T,M} + (d_t W^{y,n}, v^{y,n})_{l^2,M,T} + \mu ((d_x W^{x,n},d_x v^{x,n})_{l^2,M} \\
	& ~+ (D_y W^{x,n},D_y v^{x,n})_{l^2,T_y} + (D_x W^{y,n},D_x v^{y,n})_{l^2,T_x} \\
	&~+ (d_y W^{y,n}, d_y v^{y,n})_{l^2,T_y}) - (Z^n, d_xv^{x,n} + d_y v^{y,n})_{l^2,M}\\
	=&(f^{x,n},v^{x,n})_{l^2,T,M} + (f^{y,n},v^{y,n})_{l^2,M,T}.
	\endaligned
	\end{equation}
	
	By using Lemmas \ref{dual-problem} and \ref{le2} and noting (\ref{d_t}), we obtain that there exists $\mathbf{v}^n \in \mathbf{V}_h$ such that
	\begin{equation}
	\aligned
	\|Z^n\|^2_{l^2,M} &= (Z^n,d_x v^{x,n} + d_y v^{y,n})_{l^2,M}\\
	&= (d_t W^{x,n} - f^{x,n}, v^{x,n})_{l^2,T,M} + (d_t W^{y,n} - f^{y,n}, v^{y,n})_{l^2,M,T} + \mu ((d_x W^{x,n},d_x v^{x,n})_{l^2,M} \\
	&~~ + (D_y W^{x,n},D_y v^{x,n})_{l^2,T_y} + (D_x W^{y,n},D_x v^{y,n})_{l^2,T_x} + (d_y W^{y,n}, d_y v^{y,n})_{l^2,T_y})\\
	&\leq \frac{C}{2} (\|d_t \mathbf{W}^n\|^2_{l^2} + \mu^2\|D\mathbf{W}^n\|^2_{l^2} + \|\mathbf{f}^n\|^2_{l^2}) + \frac{1}{2}\|Z^n\|^2_{l^2,M},
	\endaligned
	\end{equation}
	and then
	\begin{equation}\label{ep}
	\|Z^n\|^2_{l^2,M} \leq C (\|d_t \mathbf{W}^n\|^2_{l^2} + \mu^2 \|D\mathbf{W}^n\|^2_{l^2} + \|\mathbf{f}^n\|^2_{l^2}),  
	\end{equation}
	where $C$ is a constant independent of viscosity.
	
	Next we give the stability result for $\|d_t \mathbf{W}^n\|^2_{l^2}$.  By making the temporal difference quotient of (\ref{e7}) and (\ref{e8}) and using a similar procedure to \eqref{d_t}, we have 
	\begin{equation}
	\aligned
	\|d_t\mathbf{W}^m\|^2_{l^2} + 2\mu \sum_{n = 2}^{m} \Delta t \|D d_t \mathbf{W}^n\|^2_{l^2}&\leq   \|d_t\mathbf{W}^1\|^2_{l^2} + 2\sum_{n = 2}^{m} \Delta t \|d_t\mathbf{f}^n\|^2_{l^2} + \frac{1}{2}\sum_{n = 2}^{m} \Delta t \|d_t\mathbf{W}^n\|^2_{l^2}.
	\endaligned
	\end{equation}
	Using Grownwall's inequality yields
	\begin{equation}\label{sta*}
	\|d_t\mathbf{W}^m\|^2_{l^2} \leq C(\|d_t\mathbf{W}^1\|^2_{l^2} + \sum_{n = 2}^{m} \Delta t \|d_t\mathbf{f}^n\|^2_{l^2}).
	\end{equation}
	As for $\|d_t\mathbf{W}^1\|^2_{l^2}$, we set $\mathbf{v}^n = d_t\mathbf{W}^n$ in (\ref{d_t}) for $n = 1$, we will get that 
	\begin{equation}\label{sta**}
	\aligned
	\|d_t \mathbf{W}^1\|^2_{l^2} + \mu \frac{\|D \mathbf{W}^1\|^2}{\Delta t} \leq \frac{1}{2} \|\mathbf{f}^1\|^2_{l^2} + \frac{1}{2}\|d_t \mathbf{W}^1\|^2_{l^2}.
	\endaligned
	\end{equation}
	Hence we can easily obtain (\ref{Sta2}) by combining (\ref{sta}) (\ref{ep}) (\ref{sta*}) and (\ref{sta**}).
\end{proof}
\section{Error Analysis for Discrete RMAC Scheme}
In this section,  by introducing the new auxiliary function depending on the velocity and discretizing parameters and establishing several lemmas, we establish robust superconvergence results for both velocity and pressure on non-uniform grids under the assumption of sufficient regularity for the exact solutions $\mathbf{u}$ and $p$.  To simplify notation, we adopt the convention $\mathbf{u} = \mathbf{u}^{(1)}$ throughout this section.

\subsection{Consistency Analysis}

Inspired by the established superconvergence analysis in \cite{li2018superconvergence,Rui2016}, we construct the new auxiliary function to derive the superconvergence results on non-uniform grids.

Define the discrete interpolations of $u^{x,n}$ and $u^{y,n}$ in $\Omega$ and on the boundary of $\Omega$ as
follows. Define for $i=0,1,\cdots,N_x,~j=0,1,\cdots,N_y-1$

\begin{equation}\label{e25}
\widetilde{u}^{x,n}_{i,j+1/2}=
\begin{array}{l}
\displaystyle
\frac{1}{k_{j+1/2}}\int_{y_{j}}^{y_{j+1}}u^{x,n}(x_i,y)dy-\frac{k_{j+1/2}^2}{6}\frac{\partial^2u^{x,n}_{i,j+1/2}}{\partial y^2},~~~~n\geq 0.
\end{array}
\end{equation}

And for $i=0,1,\cdots,N_x-1,~j=0,1,\cdots,N_y$
\begin{equation}\label{e26}
\widetilde{u}^{y,n}_{i+1/2,j}=
\begin{array}{l}
\displaystyle
\frac{1}{h_{i+1/2}}\int_{x_{i}}^{x_{i+1}}u^{y,n}(x,y_j)dx-\frac{h_{i+1/2}^2}{6}\frac{\partial^2u^{y,n}_{i+1/2,j}}{\partial y^2},~~~~n\geq 0.
\end{array}
\end{equation} 

Extend the value of $\widetilde{u}^{x,n}$ and $\widetilde{u}^{y,n}$ in the following ways,
\begin{equation}\label{e27}
\aligned
\widetilde{u}^{x,n}_{i,-1/2}=\widetilde{u}^{x,n}_{i,0}=u^{x,n}_{i,0},
~\widetilde{u}^{x,n}_{i,N_y+1/2}=\widetilde{u}^{x,n}_{i,N_y}=u^{x,n}_{i,N_y},~~~i=1,2,\cdots,N_x-1,
\endaligned
\end{equation}

\begin{equation}\label{e28}
\aligned
\widetilde{u}^{y,n}_{-1/2,j}=\widetilde{u}^{y,n}_{0,j}=u^{y,n}_{0,j},
~\widetilde{u}^{y,n}_{N_x+1/2,j}=\widetilde{u}^{y,n}_{N_x,j}=u^{y,n}_{N_x,j},  ~~~j=1,2,\cdots,N_y-1.
\endaligned
\end{equation}

First, we give the main results of consistency analysis for \eqref{e21*} and \eqref{e22*}.
\medskip
\begin{lem}(Consistency)\label{lemcon}
	Let $\mathbf{u}=(u^x,u^y)$ be the solution of (\ref{e20}). Let $\widetilde{u}^{x,n},~\widetilde{u}^{y,n}$ be defined by (\ref{e25}) and (\ref{e26}). Then (\ref{e21*}) and (\ref{e22*}) can be transformed into the following form:
	\begin{equation}\label{e39}
	\aligned
	&d_{t}{\widetilde{u}}^{x,n}_{i,j+1/2}-\mu \frac{d_x\widetilde{u}^{x,n}_{i+1/2,j+1/2}-d_x\widetilde{u}^{x,n}_{i-1/2,j+1/2}}{h_i}
	-\mu \frac{D_y\widetilde{u}^{x,n}_{i,j+1}
		-D_y\widetilde{u}^{x,n}_{i,j}}{k_{j+1/2}}=\hat{\beta}_{i,j+1/2}^{x,n}+E^{x,n}_{i,j+1/2}\\
	&~~~~~~-\frac{\gamma^{x,n}_{i+1/2,j+1/2}-\gamma^{x,n}_{i-1/2,j+1/2}}{h_i}-\mu \frac{\epsilon^{x,n}_{i+1/2,j+1/2}-\epsilon^{x,n}_{i-1/2,j+1/2}}{h_i}
	-\mu \frac{\epsilon^{x,y,n}_{i,j+1}-\epsilon^{x,y,n}_{i,j}}{k_{j+1/2}},
	\endaligned
	\end{equation}
    \begin{equation}\label{e39*}
    \aligned
	&d_{t}{\widetilde{u}}^{y,n}_{i+1/2,j}-\mu \frac{D_x\widetilde{u}^{y,n}_{i+1,j}-D_x\widetilde{u}^{y,n}_{i,j}}{h_{i+1/2}}
	-\mu \frac{d_y\widetilde{u}^{y,n}_{i+1/2,j+1/2}
		-d_y\widetilde{u}^{y,n}_{i+1/2,j-1/2}}{k_j}=\hat{\beta}_{i+1/2,j}^{y,n}+E^{y,n}_{i+1/2,j}\\
	&~~~~~~-\frac{\gamma^{y,n}_{i+1/2,j+1/2}-\gamma^{y,n}_{i+1/2,j-1/2}}{k_j}-\mu \frac{\epsilon^{y,n}_{i+1/2,j+1/2}-\epsilon^{y,n}_{i+1/2,j-1/2}}{k_j}
	-\mu \frac{\epsilon^{y,x,n}_{i+1,j}-\epsilon^{y,x,n}_{i,j}}{h_{i+1/2}},
	\endaligned
	\end{equation}
	where
	\begin{equation*}
	\aligned
	\displaystyle \gamma^{x,n}_{i+1/2,j+1/2} = \frac{h^2_{i+1/2}}{8}\frac{\partial^2 u^{x,n}_{i+1/2,j+1/2}}{\partial t \partial x},&~~\gamma^{y,n}_{i+1/2,j+1/2} = \frac{k^2_{j+1/2}}{8}\frac{\partial^2 u^{y,n}_{i+1/2,j+1/2}}{\partial t \partial y}\\
	 \epsilon^{y,n}_{i+1/2,j+1/2},~\epsilon^{x,n}_{i+1/2,j+1/2},~\epsilon^{y,x,n}_{i,j},~\epsilon^{x,y,n}_{i,j} &= O(\|\mathbf{u}\|_{L^{\infty}(J;W^{3,\infty}(\Omega))} (h^2+k^2)),\\
	 \displaystyle E^{x,n}_{i,j+1/2},~E^{y,n}_{i+1/2,j} &= (1 + \mu)O(\|\mathbf{u}\|_{W^{2,\infty}(J;W^{4,\infty}(\Omega))} (h^2+k^2+\Delta t)).
	\endaligned
	\end{equation*}
	For brevity, the proof details for the above lemma will be specified separately in the Appendix A.
\end{lem}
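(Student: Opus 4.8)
The plan is to obtain \eqref{e39} and \eqref{e39*} by a term-by-term consistency analysis of the exact averaged identities \eqref{e21*}--\eqref{e22*}, using Taylor expansion together with the fundamental theorem of calculus, and to sort the resulting remainders into two groups: those which must be retained in discrete-gradient or divided-difference form (these become the $\gamma$ and $\epsilon$ quantities) and those which are genuinely $O(h^2+k^2+\Delta t)$ and may be collected into $E$. I would carry out \eqref{e21*}$\to$\eqref{e39} in full and treat \eqref{e22*}$\to$\eqref{e39*} as its mirror image, with $x\leftrightarrow y$ and the two velocity components interchanged. The structural fact driving the whole computation is that on a non-uniform grid $x_i$ is \emph{not} the midpoint of $[x_{i-1/2},x_{i+1/2}]$ --- its offset from that midpoint is $\frac14(h_{i+1/2}-h_{i-1/2})=O(h)$ --- whereas $x_{i+1/2}$ \emph{is} the midpoint of $[x_i,x_{i+1}]$ and $y_{j+1/2}$ \emph{is} the midpoint of $[y_j,y_{j+1}]$; the correction $-\frac{k_{j+1/2}^2}{6}\partial_{yy}u^{x,n}$ in \eqref{e25} is calibrated so that, combined with the $\frac{k_{j+1/2}^2}{24}\partial_{yy}u^{x,n}$ coming from the $y$-averaging (net coefficient $\frac{1}{24}-\frac16=-\frac18$), the $O(k)$ defect generated by the off-centered half-integer $y$-nodes cancels.

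Concretely, for the time term, expanding \eqref{e25} gives $d_t\widetilde u^{x,n}_{i,j+1/2}=\partial_t u^x(x_i,y_{j+1/2},t^n)+O(\Delta t+k^2)$, while the exact average $\frac1{h_i}\int_{x_{i-1/2}}^{x_{i+1/2}}\partial_t u^x(x,y_{j+1/2},t^n)\,\diff x$ equals $\partial_t u^x(x_i,y_{j+1/2},t^n)+\frac14(h_{i+1/2}-h_{i-1/2})\partial_t\partial_x u^x(x_i,y_{j+1/2},t^n)+O(h^2)$ by the off-centered midpoint rule; the $O(h)$ discrepancy is rewritten as $\frac{\gamma^{x,n}_{i+1/2,j+1/2}-\gamma^{x,n}_{i-1/2,j+1/2}}{h_i}$ up to $O(h^2)$ via $h_{i+1/2}^2-h_{i-1/2}^2=2h_i(h_{i+1/2}-h_{i-1/2})$, which is precisely what fixes the factor $\frac18$ and the mixed derivative $\partial_t\partial_x u^x$ appearing in $\gamma^{x,n}$. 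For the $\partial_{xx}$ diffusion term, the fundamental theorem of calculus turns the average into $\frac1{h_i}\bigl(\partial_x u^x(x_{i+1/2},y_{j+1/2},t^n)-\partial_x u^x(x_{i-1/2},y_{j+1/2},t^n)\bigr)$ \emph{exactly}; Taylor-expanding $d_x\widetilde u^{x,n}_{i\pm1/2,j+1/2}$ about the true midpoints $x_{i\pm1/2}$ (again using the $y$-average expansion and the correction) yields $d_x\widetilde u^{x,n}_{i+1/2,j+1/2}-\partial_x u^x(x_{i+1/2},y_{j+1/2},t^n)=\epsilon^{x,n}_{i+1/2,j+1/2}=O(h^2+k^2)$, hence the term $-\mu\frac{\epsilon^{x,n}_{i+1/2,j+1/2}-\epsilon^{x,n}_{i-1/2,j+1/2}}{h_i}$. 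For the $\partial_{yy}$ diffusion term, I would use that $\partial_{yy}$ commutes with the $x$-average, write the average of $\partial_{yy}u^x$ at $y_{j+1/2}$ as a centered (about $y_{j+1/2}$) second $y$-difference of a first $y$-derivative, and relate that first $y$-derivative to $D_y\widetilde u^{x,n}_{i,\cdot}$; here the $-\frac{k^2}{6}$ correction is indispensable, because the nodes $y_{j+1/2},y_{j+3/2}$ straddling $y_{j+1}$ are off-centered and the correction weight $k_{j+1/2}^2$ differs from $k_{j+3/2}^2$, and the two resulting $O(k)$ contributions cancel by the $-\frac18$ balance above, leaving the residual $\epsilon^{x,y,n}_{i,j}=O(h^2+k^2)$ and the term $-\mu\frac{\epsilon^{x,y,n}_{i,j+1}-\epsilon^{x,y,n}_{i,j}}{k_{j+1/2}}$. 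Substituting the exact identity \eqref{e21*} and replacing $\partial_t$ by $d_t$ with $O(\Delta t)$ error then produces \eqref{e39}. The boundary rows $j=0$ and $j=N_y-1$ are handled separately with the extensions \eqref{e27}--\eqref{e28} (correction dropped, $\widetilde u=u$ at $\partial\Omega$), checking that the consistency errors there stay $O(h^2+k^2+\Delta t)$ by using the vanishing of $u$ and its tangential derivatives on $\partial\Omega$.

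The main obstacle is exactly this non-uniform-grid bookkeeping: one must show that every remainder which is not pointwise $O(h^2+k^2+\Delta t)$ --- the $O(h)$ midpoint defect in the time term and the $O(k)$ defects buried in $D_y\widetilde u^{x,n}$ --- collapses \emph{exactly} onto the discrete-gradient/divided-difference structures $\gamma$ and $\epsilon$. These quantities are only $O(h^2)$ pointwise, so their divided differences $\frac{\gamma^{x,n}_{i+1/2,j+1/2}-\gamma^{x,n}_{i-1/2,j+1/2}}{h_i}$ and $\frac{\epsilon^{x,y,n}_{i,j+1}-\epsilon^{x,y,n}_{i,j}}{k_{j+1/2}}$ are genuinely only $O(h)$ or $O(k)$; this is acceptable only because the subsequent error analysis pairs them with the discrete dual velocity and sums by parts via Lemma \ref{le2}, thereby recovering the superconvergent $O(h^2+k^2)$, which is the whole reason they must be kept in that form rather than bounded and absorbed into $E$. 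Simultaneously the viscosity weighting has to be kept sharp --- the $\gamma$-defect $\mu$-free, each $\epsilon$-defect carrying the single $\mu$ from the diffusion operator, and all genuinely $O(h^2+k^2+\Delta t)$ pieces collected so that $E^{x,n}=(1+\mu)O\bigl(\|\mathbf u\|_{W^{2,\infty}(J;W^{4,\infty}(\Omega))}(h^2+k^2+\Delta t)\bigr)$ --- and one has to check that only third-order spatial derivatives enter the $\epsilon$'s (which works out because rewriting a remainder in divided-difference form lowers the required Taylor order by one) and at most fourth spatial / second temporal derivatives enter $E$. This bookkeeping, together with the boundary-row expansions, is the bulk of the technical work and is what is deferred to Appendix A.
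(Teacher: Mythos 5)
Your proposal follows essentially the same route as the paper's Appendix A (Lemmas \ref{le9}, \ref{le7}, \ref{le8}): a term-by-term Taylor/FTC consistency analysis in which the off-centered midpoint defect of the time term is rewritten as the divided difference of $\gamma^{x,n}_{i+1/2,j+1/2}=\frac{h^2_{i+1/2}}{8}\partial_t\partial_x u^{x,n}_{i+1/2,j+1/2}$ via $h^2_{i+1/2}-h^2_{i-1/2}=2h_i(h_{i+1/2}-h_{i-1/2})$, the diffusion defects are collected into divided differences of third-derivative quantities, and everything genuinely $O(h^2+k^2+\Delta t)$ is absorbed into $E$ with the $(1+\mu)$ weighting you describe. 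The only cosmetic discrepancies are that the paper splits the $\partial_{yy}$-term defect into a $k$-direction piece $\epsilon^{x,y,n}$ \emph{plus} an additional $x$-direction piece $\delta^{x,n}_{i+1/2,j+1/2}=\frac{h^2_{i+1/2}}{8}\partial_x\partial_y^2 u^{x,n}_{i+1/2,j+1/2}$ (which is what gets folded into your $\epsilon^{x,n}$ together with $\epsilon^{x,x,n}$), and the $O(k)$ cancellation inside $D_y\widetilde{u}^{x,n}_{i,j+1}$ is a direct $\tfrac16$-versus-$\tfrac16$ cancellation between the averaging expansion about $y_{j+1}$ and the $-\tfrac{k^2}{6}$ correction, rather than the $\tfrac1{24}-\tfrac16=-\tfrac18$ balance you attribute it to.
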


  Next we give the following lemma to point out the essential difference between the divergence free operators in continuous and discrete cases. 
  \medskip
\begin{lem}\label{le5}
	 Let $\mathbf{u}=(u^x,u^y)$ be the solution of (\ref{e20}). Let $\widetilde{u}^{x,n},~\widetilde{u}^{y,n}$ be defined by (\ref{e25}) and (\ref{e26}). Then for $0\leq i\leq N_x-1,~0\leq j\leq N_y-1$ there holds
	\begin{equation}\label{e29}
	\aligned
	\quad [d_x\widetilde{u}^{x}+d_y\widetilde{u}^{y}]_{i+1/2,j+1/2}^n&=O(\|\mathbf{u}\|_{L^{\infty}(J;W^{3,\infty}(\Omega))}(h^2+k^2)),\\
	d_t[d_x\widetilde{u}^{x}+d_y\widetilde{u}^{y}]_{i+1/2,j+1/2}^n&=O(\|\mathbf{u}\|_{W^{2,\infty}(J;W^{3,\infty}(\Omega))}(h^2+k^2 + \Delta t)),\\
	d_td_t[d_x\widetilde{u}^{x}+d_y\widetilde{u}^{y}]_{i+1/2,j+1/2}^n&=O(\|\mathbf{u}\|_{W^{3,\infty}(J;W^{3,\infty}(\Omega))}(h^2+k^2+\Delta t)).
	\endaligned
	\end{equation}
\end{lem}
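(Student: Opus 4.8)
The plan is to split each interpolant $\widetilde u^{x,n}_{i,j+1/2}$ (resp.\ $\widetilde u^{y,n}_{i+1/2,j}$) defined by \eqref{e25}--\eqref{e26} into a \emph{cell-average part} plus an order-$(h^2{+}k^2)$ \emph{second-derivative correction}, to observe that the cell-average parts carry all of the incompressibility structure and cancel \emph{exactly}, and then to bound the corrections directly. Writing $\widetilde u^{x,n}_{i,j+1/2}=A^{x,n}_{i,j+1/2}-\tfrac{k_{j+1/2}^2}{6}\partial_y^2u^{x,n}_{i,j+1/2}$ with $A^{x,n}_{i,j+1/2}:=\tfrac1{k_{j+1/2}}\int_{y_j}^{y_{j+1}}u^{x,n}(x_i,y)\,dy$, and likewise $\widetilde u^{y,n}_{i+1/2,j}=A^{y,n}_{i+1/2,j}-\tfrac{h_{i+1/2}^2}{6}\partial_x^2u^{y,n}_{i+1/2,j}$ with $A^{y,n}_{i+1/2,j}:=\tfrac1{h_{i+1/2}}\int_{x_i}^{x_{i+1}}u^{y,n}(x,y_j)\,dx$, the first computation I would carry out, using the fundamental theorem of calculus in $x$ (for $A^x$) and in $y$ (for $A^y$) together with Fubini's theorem, is
\[
[d_xA^{x}+d_yA^{y}]^n_{i+1/2,j+1/2}=\frac1{h_{i+1/2}k_{j+1/2}}\int_{\Omega_{i+1/2,j+1/2}}\Big(\frac{\partial u^{x,n}}{\partial x}+\frac{\partial u^{y,n}}{\partial y}\Big)\,dx\,dy=0,
\]
the last equality being the divergence-free equation in \eqref{e20}. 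Hence $[d_x\widetilde u^{x}+d_y\widetilde u^{y}]^n_{i+1/2,j+1/2}$ reduces to the two correction terms only.

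The second step is to estimate the remaining quantity
\[
-\frac{k_{j+1/2}^2}{6}\cdot\frac{\partial_y^2u^{x,n}_{i+1,j+1/2}-\partial_y^2u^{x,n}_{i,j+1/2}}{h_{i+1/2}}-\frac{h_{i+1/2}^2}{6}\cdot\frac{\partial_x^2u^{y,n}_{i+1/2,j+1}-\partial_x^2u^{y,n}_{i+1/2,j}}{k_{j+1/2}}.
\]
By the mean value theorem each divided difference equals a mixed third-order spatial derivative of the corresponding velocity component evaluated at an interior point, so this term is $O\big((h^2+k^2)\|\mathbf{u}\|_{W^{3,\infty}(\Omega)}\big)$; taking the supremum over $t^n\in J$ gives the first line of \eqref{e29}. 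I would also remark that the regularity of $\mathbf{u}$ makes $\widetilde u^{x,n},\widetilde u^{y,n}$ well defined and that the boundary extensions \eqref{e27}--\eqref{e28} are never invoked for the index range $0\le i\le N_x-1$, $0\le j\le N_y-1$.

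For the temporal differences I would exploit that $d_t$ commutes with every ingredient of the construction — the averaging operators $\tfrac1{k_{j+1/2}}\int_{y_j}^{y_{j+1}}\!\cdot\,dy$ and $\tfrac1{h_{i+1/2}}\int_{x_i}^{x_{i+1}}\!\cdot\,dx$, the pointwise second-derivative evaluations $\partial_y^2$, $\partial_x^2$, and the divided differences $d_x,d_y$ — since all of these act only on the spatial variables. Therefore $d_t^{(r)}[d_x\widetilde u^{x}+d_y\widetilde u^{y}]^n$ coincides with the expression obtained above with $(u^x,u^y)$ replaced by the $r$-fold temporal divided difference $(d_t^{(r)}u^x,d_t^{(r)}u^y)$ at the relevant time level. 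The cancellation of the first step survives because $\partial_x(d_t^{(r)}u^x)+\partial_y(d_t^{(r)}u^y)=d_t^{(r)}(\partial_xu^x+\partial_yu^y)=0$, and repeated application of the mean value theorem in $t$ gives $\|d_t^{(r)}\mathbf{u}\|_{W^{3,\infty}(\Omega)}\le C\|\mathbf{u}\|_{W^{r,\infty}(J;W^{3,\infty}(\Omega))}$. Applying the bound of the second step with $r=1$ and $r=2$ then yields the last two lines of \eqref{e29}; the $+\Delta t$ appearing there is a harmless over-estimate.

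I do not expect a serious obstacle — the argument is essentially bookkeeping — but the one point that must be handled with care is precisely the commutation in the last step: if one instead applied the mean value theorem in $t$ \emph{before} moving $d_t$ through the $x$-divided difference of $\partial_y^2u^{x}$, a spurious factor $\Delta t/h_{i+1/2}$ would appear; commuting first and then bounding purely spatial divided differences avoids this. Likewise, the exact cancellation in the first step is what keeps the proof short: the alternative of Taylor-expanding $A^{x},A^{y}$ about $(x_{i+1/2},y_{j+1/2})$ would force one to track the midpoint-quadrature and central-difference remainders explicitly, arriving at the same $O(h^2+k^2)$ bound with considerably more computation.
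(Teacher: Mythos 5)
Your proof is correct and follows essentially the same route as the paper: the paper likewise introduces the cell-average part $\hat{u}$ of the interpolant, observes that $d_x\hat{u}^x+d_y\hat{u}^y=0$ exactly by the divergence-free condition, and dismisses the remaining bounds on the second-derivative corrections and their temporal differences as a standard exercise. You have simply written out that exercise (mean value theorem for the spatial divided differences of the correction terms, commutation of $d_t$ with the spatial operators, and divergence-freeness of the temporal differences of $\mathbf{u}$), and your resulting bounds are in fact slightly sharper than the ones stated in the lemma.
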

\begin{proof}
	Set
	\begin{equation}
	\aligned
		\hat{u}^{x,n}_{i,j+1/2} = \frac{1}{k_{j+1/2}}\int_{y_j}^{y_{j+1}}u^{x,n}(x_i,y)dy \\
		\hat{u}^{y,n}_{i+1/2,j} = \frac{1}{h_{i+1/2}}\int_{x_i}^{x_{i+1}}u^{y,n}(x,y_j)dx,
	\endaligned
	\end{equation}
	Recalling the divergence-free condition, we can obtain that 
	\begin{equation}
		\aligned
		d_x\hat{u}_{i+1/2,j+1/2} + d_y\hat{u}_{i+1/2,j+1/2} = 0.
		\endaligned
	\end{equation}
	Then (\ref{e29}) becomes a standard exercise.
\end{proof}

Next to obtain the $\mu$-robustness on non-uniform grids for the constructed RMAC scheme, we establish the follow lemma. Our analysis reveals a critical deficiency in standard energy methods in \cite{Rui2016,li2018superconvergence}. Specifically in detail, the temporal truncation error component $\gamma^n$ is not multiplied by $\mu$, thereby fundamentally compromising the $\mu$-robustness. To address this limitation, we exploit the intrinsic second-order accuracy of mixed derivatives $D_y \gamma^{x,n},~D_x \gamma^{y,n}$ on  non-uniform grids, which is an essential improvement compared to the result of not preserving the $\mu$-robustness in \cite{Rui2016,li2018superconvergence}.
\medskip
\begin{lem}\label{cross}
	Let $\gamma = (\gamma^x,\gamma^y)$ be definded in Lemma \ref{lemcon}. For any $\mathbf{v} \in \mathbf{V}_h$ satisfies \begin{equation}\label{ediv}
		d_x v^x + d_y v^y = O(h^2 + k^2 + \Delta t),
	\end{equation}	
	there holds
	\begin{equation}
		(D_x \gamma^{x,n},v^x)_{l^2,T,M} + (D_y \gamma^{y,n},v^y)_{l^2,M,T} \leq \epsilon\|\mathbf{v}\|^2_{l^2} + O(h^2 + k^2 + \Delta t)^2, 
	\end{equation}
	where the positive constant $\epsilon$ can be arbitrarily small.
\end{lem}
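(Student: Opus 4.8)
The plan is to rewrite the left-hand side, up to a harmless remainder, as a sum of the two \emph{cross}-derivative pairings $-(D_y\gamma^{x,n},v^y)_{l^2,M,T}$ and $-(D_x\gamma^{y,n},v^x)_{l^2,T,M}$, and then to use that these cross derivatives are genuinely $O(h^2)$ and $O(k^2)$ on an arbitrary grid. The point is an asymmetry built into $\gamma^{x,n}_{i+1/2,j+1/2}=\frac{h_{i+1/2}^2}{8}\partial^2_{tx}u^{x,n}_{i+1/2,j+1/2}$: differencing in the $x$-direction generates, besides an $O(h^2)$ remainder, the contribution $\frac{h_{i+1/2}-h_{i-1/2}}{4}\partial^2_{tx}u^{x,n}_{i,j+1/2}$, which on a grid satisfying merely \eqref{e2} is only $O(h)$ — this is precisely the obstruction that makes the standard energy argument non-$\mu$-robust — whereas differencing in the $y$-direction leaves the mesh factor $h_{i+1/2}^2$ intact, so $D_y\gamma^{x,n}$ equals $\frac{h_{i+1/2}^2}{8}$ times a bounded $y$-difference quotient of $\partial^2_{tx}u^x$ and is therefore $O(h^2)$; symmetrically $D_x\gamma^{y,n}=O(k^2)$. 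So the whole proof is the conversion of the ``bad'' self-derivatives $D_x\gamma^{x,n},D_y\gamma^{y,n}$ into the ``good'' cross-derivatives by means of the near-divergence-free hypothesis \eqref{ediv}.

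First I would apply the discrete integration-by-parts formula, Lemma \ref{le2}; since $\mathbf v\in\mathbf V_h$ vanishes on $\partial\Omega$ all boundary terms drop and
\[
(D_x\gamma^{x,n},v^x)_{l^2,T,M}+(D_y\gamma^{y,n},v^y)_{l^2,M,T}=-(\gamma^{x,n},d_xv^x)_{l^2,M}-(\gamma^{y,n},d_yv^y)_{l^2,M}.
\]
Next, writing $\delta:=d_xv^x+d_yv^y$ (so that $\|\delta\|_{l^2,M}=O(h^2+k^2+\Delta t)$ by \eqref{ediv}), I would substitute $d_xv^x=\delta-d_yv^y$ in the first term on the right and $d_yv^y=\delta-d_xv^x$ in the second, and then apply Lemma \ref{le2} a second time (now with $\gamma^{x,n}$ differenced in $y$ against $v^y$, and $\gamma^{y,n}$ differenced in $x$ against $v^x$), to reach
\[
(D_x\gamma^{x,n},v^x)_{l^2,T,M}+(D_y\gamma^{y,n},v^y)_{l^2,M,T}=-(\gamma^{x,n}+\gamma^{y,n},\delta)_{l^2,M}-(D_y\gamma^{x,n},v^y)_{l^2,M,T}-(D_x\gamma^{y,n},v^x)_{l^2,T,M}.
\]
This algebraic rotation is the heart of the matter: each $\gamma$-component is now differenced in the direction transverse to its own mesh factor.

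The remaining estimates are routine. From the formulas in Lemma \ref{lemcon}, $\|\gamma^{x,n}\|_{l^2,M}=O(h^2)$ and $\|\gamma^{y,n}\|_{l^2,M}=O(k^2)$, so $|(\gamma^{x,n}+\gamma^{y,n},\delta)_{l^2,M}|\le\|\gamma^{x,n}+\gamma^{y,n}\|_{l^2,M}\|\delta\|_{l^2,M}=O(h^2+k^2+\Delta t)^2$. Using the pointwise bounds $|D_y\gamma^{x,n}|\le C h^2\|\mathbf u\|_{W^{1,\infty}(J;W^{2,\infty}(\Omega))}$ and $|D_x\gamma^{y,n}|\le C k^2\|\mathbf u\|_{W^{1,\infty}(J;W^{2,\infty}(\Omega))}$, followed by Cauchy--Schwarz, the identity $\|v^x\|^2_{l^2,T,M}+\|v^y\|^2_{l^2,M,T}=\|\mathbf v\|^2_{l^2}$, and Young's inequality with a parameter that may be taken arbitrarily small, the last two pairings are bounded by $\epsilon\|\mathbf v\|^2_{l^2}+O(h^2+k^2+\Delta t)^2$. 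Adding the three bounds gives the assertion.

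The main obstacle is conceptual and sits in the rotation step: one must resist estimating $D_x\gamma^{x,n}$ or $D_y\gamma^{y,n}$ directly, because on a non-uniform grid these are only $O(h)$ and $O(k)$, so Cauchy--Schwarz and Young's inequality would leave an $O(h^2)+O(k^2)$ term that wipes out the second-order superconvergence; the discrete incompressibility defect \eqref{ediv} must be invoked precisely to trade each derivative for its transverse counterpart, where the mesh-irregularity factor no longer interferes. Once that is seen, the two applications of Lemma \ref{le2} and the elementary order estimates for $\gamma$ and its differences are mechanical.
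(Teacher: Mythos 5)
Your proposal is correct and follows essentially the same route as the paper: discrete integration by parts (Lemma \ref{le2}), the add-and-subtract of the discrete divergence to trade the self-derivatives $D_x\gamma^{x,n},D_y\gamma^{y,n}$ for the cross-derivatives $D_y\gamma^{x,n},D_x\gamma^{y,n}$, a second integration by parts, and then Cauchy--Schwarz together with Young's inequality, using \eqref{ediv} and the fact that the transverse differences leave the mesh factors $h_{i+1/2}^2,k_{j+1/2}^2$ intact and hence remain second order. Your explanatory remark on why the self-derivatives are only first order on non-uniform grids matches the discussion the paper gives just before the lemma.
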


\begin{proof}
	By using Lemma \ref{le2} and Cauchy-Schwarz inequality, we can obtain that
	\begin{equation}
	\aligned
		(D_x \gamma^{x,n}&,v^x)_{l^2,T,M} + (D_y \gamma^{y,n},v^y)_{l^2,M,T} \\
		&= -(\gamma^{x,n},d_xv^x)_{l^2,M} -(\gamma^{y,n},d_yv^y)_{l^2,M} \\
		&= -(\gamma^{x,n}+\gamma^{y,n},d_xv^x+d_yv^y)_{l^2,M} + (\gamma^{x,n},d_yv^x)_{l^2,M}+(\gamma^{y,n},d_xv^x)_{l^2,M}\\
		&= -(\gamma^{x,n}+\gamma^{y,n},d_xv^x+d_yv^y)_{l^2,M} -(D_y \gamma^{x,n},v^y)_{l^2,M,T} - (D_x \gamma^{y,n},v^x)_{l^2,T,M}\\
		&\leq \epsilon\|\mathbf{v}\|^2_{l^2}+\frac{1}{4\epsilon}(\|D_y\gamma^{x,n}\|^2_{l^2,M,T}+\|D_x\gamma^{y,n}\|^2_{l^2,T,M})-(\gamma^{x,n}+\gamma^{y,n},d_xv^x+d_yv^y)_{l^2,M}.
	\endaligned
	\end{equation}
	Recalling (\ref{ediv}) and the defination of $\gamma$, we obtain the desired result.
\end{proof}

For simplicity, we set 
\begin{equation}\label{e35}
\aligned
e^{x,n}_{i,j+1/2}=(W^x-\widetilde{u}^x)_{i,j+1/2}^n,~e^{y,n}_{i+1/2,j}=(W^y-\widetilde{u}^y)_{i+1/2,j}^n,~e^{p,n}_{i+1/2,j+1/2}=(Z^{(1)}-0)_{i+1/2,j+1/2}^n.
\endaligned
\end{equation}
From (\ref{e6}), we have
\begin{equation}\label{e36}
\left\{
\begin{array}{l}
e_{0,j+1/2}^{x,n}=e_{N_x,j+1/2}^{x,n}=0,\quad 0\leq j\leq N_y-1,\\
e_{i,0}^{x,n}=e_{i,N_y}^{x,n}=0,\quad 0\leq i\leq N_x,\\
e_{0,j}^{y,n}=e_{N_x,j}^{y,n}=0, \quad 0\leq j\leq N_y,\\
e_{i+1/2,0}^{y,n}=e_{i+1/2,N_y}^{y,n}=0,\quad 0\leq i\leq N_x-1.
\end{array}
\right.
\end{equation} 
\subsection{Error Estimates for the Velocity}
While both RMAC and MAC schemes maintain exact discrete incompressibility for numerical velocity solutions, their auxiliary velocity $\tilde{\mathbf{u}}$ inherently violate this constraint, which fundamentally compromises the discrete divergence-free constraint on the velocity error $\mathbf{e}^n$.  Hence the main contribution in this section is to establish robust superconvergence for the constructed RMAC scheme rigorously, as formally established in the following theorem.
\medskip
\begin{thm}\label{th9}
	Suppose that the analytical solutions $\textbf{u}$ and $p$ are sufficiently smooth. For the RMAC scheme  (\ref{e7})-(\ref{e9}), there exists a positive constant $C$ independent of $h,~k,~\Delta t,$ pressure $p$ and undisplayed dependent on $1/\mu$, such that for $1 \leq m \leq N$
	\begin{align} \label{e_nonuniform_erroru}
	\|\mathbf{W}^{m}-\mathbf{u}^{m}\|_{l^2}&\leq C(\mathbf{u},\partial_t\mathbf{u},\partial_{tt}\mathbf{u},T)(h^2+k^2+\Delta t).
	\end{align}
	Furthermore, we can obatin the stronger error estimate with $\Delta t = O(h^2)$ on uniform grids,
	\begin{align} \label{e_uniform_erroru}
		\|\mathbf{W}^{m}-\mathbf{u}^{m}\|_{l^\infty}&\leq C(\mathbf{u},\partial_t\mathbf{u},\partial_{tt}\mathbf{u},T)h^2.
	\end{align}

\end{thm}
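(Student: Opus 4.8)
The plan is to establish the superconvergence estimate by an energy argument applied to the error equations, combined with a duality bound for the pressure error. First I would subtract the consistency identities \eqref{e39}--\eqref{e39*} from the scheme \eqref{dise1}--\eqref{dise3} (recalling $\mathbf{W}^{n}=\mathbf{W}^{(1),n}$ and $\mathbf{u}=\mathbf{u}^{(1)}$), obtaining equations for the errors $e^{x,n},e^{y,n},e^{p,n}$ defined in \eqref{e35}. The right-hand side of these error equations collects the truncation terms: the genuinely second-order/$O(\Delta t)$ part $E^{x,n},E^{y,n}$ (which carries the harmless factor $1+\mu$), the spatial truncation pieces $\epsilon^{x,n},\epsilon^{x,y,n},\ldots$ that are already multiplied by $\mu$, and crucially the temporal truncation terms $-(\gamma^{x,n}_{i+1/2,j+1/2}-\gamma^{x,n}_{i-1/2,j+1/2})/h_i$ which are \emph{not} premultiplied by $\mu$. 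I would also record from Lemma \ref{le5} that $d_x e^{x,n}+d_y e^{y,n}=-(d_x\widetilde u^{x}+d_y\widetilde u^{y})^n=O(h^2+k^2)$, and its first and second temporal difference quotients are likewise $O(h^2+k^2+\Delta t)$; this is precisely the hypothesis \eqref{ediv} needed to invoke Lemma \ref{cross}.

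Next I would test the error momentum equations with $\mathbf{e}^{n}$ (i.e.\ multiply by $e^{x,n}h_ik_{j+1/2}$, $e^{y,n}h_{i+1/2}k_j$ and sum, as in \eqref{d_t}), summing also in time from $1$ to $m$. The viscous terms produce the dissipation $\mu\sum\Delta t\|D\mathbf{e}^n\|_{l^2}^2$, the time-derivative terms telescope into $\tfrac12\|\mathbf{e}^m\|_{l^2}^2$ (using the zero initial data in \eqref{e36}), and the pressure-coupling term $(e^{p,n},d_xe^{x,n}+d_ye^{y,n})_{l^2,M}$ is no longer zero — but since $d_xe^{x,n}+d_ye^{y,n}=O(h^2+k^2)$ it is controlled by $C\|e^{p,n}\|_{l^2,M}(h^2+k^2)$, which after a Cauchy--Schwarz split is absorbed once we have an independent bound on $\|e^{p,n}\|_{l^2,M}$; for the velocity estimate alone one actually only needs this quantity to appear with a small coefficient against $\|\mathbf{e}^n\|_{l^2}$ plus data. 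On the right-hand side, the $E$-terms and $\mu$-weighted $\epsilon$-terms are handled directly by Cauchy--Schwarz (the $\mu\|\epsilon\|^2/(4\delta)$ contributions being absorbed by $\mu\|D\mathbf{e}^n\|^2$ after using the discrete Poincaré/negative-norm structure, or simply bounded since $\mu\le$ const). The delicate $\gamma$-terms are exactly where Lemma \ref{cross} enters: writing $(D_x\gamma^{x,n},e^{x,n})_{l^2,T,M}+(D_y\gamma^{y,n},e^{y,n})_{l^2,M,T}$, Lemma \ref{cross} with $\mathbf{v}=\mathbf{e}^n$ (legitimate by the divergence bound just noted) gives $\le\epsilon\|\mathbf{e}^n\|_{l^2}^2+O(h^2+k^2+\Delta t)^2$, so the $\gamma$-contribution is second order \emph{without} any $1/\mu$ blow-up — this is the heart of the $\mu$-robustness and the whole reason the mixed-derivative trick was introduced. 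Collecting terms, choosing $\epsilon$ small, and applying the discrete Gronwall inequality over $n=1,\dots,m$ yields $\|\mathbf{e}^m\|_{l^2}^2\le C(h^2+k^2+\Delta t)^2$ with $C$ depending on $\mathbf{u},\partial_t\mathbf{u},\partial_{tt}\mathbf{u},T$ and on $1/\mu$ but not on $p$. Finally, since $\|\widetilde u^{x,n}-u^{x,n}\|$ and $\|\widetilde u^{y,n}-u^{y,n}\|$ are $O(h^2+k^2)$ directly from the definitions \eqref{e25}--\eqref{e28} (the subtracted correction term cancels the $O(k_{j+1/2}^2)$ averaging error), the triangle inequality $\|\mathbf{W}^m-\mathbf{u}^m\|_{l^2}\le\|\mathbf{e}^m\|_{l^2}+\|\widetilde{\mathbf{u}}^m-\mathbf{u}^m\|_{l^2}$ gives \eqref{e_nonuniform_erroru}.

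For the $l^\infty$ estimate \eqref{e_uniform_erroru} on uniform grids I would upgrade the argument: on uniform grids the consistency errors $\gamma^{x,n},\gamma^{y,n}$ and the $\epsilon$-terms improve (the odd-order terms in the Taylor expansions cancel), and one can test the error equations with a discrete Green's-function-type dual weight (or equivalently differentiate the error equation and bound $\|d_t\mathbf{e}^m\|$ and $\|D\mathbf{e}^m\|$ as in the stability proof, then use a discrete Sobolev inequality $\|\cdot\|_{l^\infty}\le C|\ln h|^{1/2}\|D\cdot\|_{l^2}$ or an $l^1$--$l^\infty$ duality). The condition $\Delta t=O(h^2)$ makes the temporal truncation $O(\Delta t)=O(h^2)$ match the spatial $O(h^2)$, so no logarithmic loss survives and one obtains $\|\mathbf{W}^m-\mathbf{u}^m\|_{l^\infty}\le Ch^2$. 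I expect the main obstacle to be the rigorous treatment of the non-vanishing pressure-coupling term $(e^{p,n},d_xe^{x,n}+d_ye^{y,n})_{l^2,M}$ together with the $\gamma$-terms simultaneously while keeping all constants independent of $p$: one must be careful that the $\|e^{p,n}\|_{l^2,M}$ appearing here is bounded via Lemma \ref{dual-problem} in terms of $\|d_t\mathbf{e}^n\|$, $\mu\|D\mathbf{e}^n\|$ and the truncation data only (mirroring \eqref{ep}), and that this auxiliary pressure bound closes the Gronwall loop without circular dependence — this is the step where the splitting \eqref{superposition1}--\eqref{superposition2} and the discrete invariance of Lemma \ref{lem:discrete_invariance} do the essential work of removing the explicit $p$-dependence.
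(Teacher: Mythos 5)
Your plan for the $l^2$ estimate \eqref{e_nonuniform_erroru} follows the same architecture as the paper's proof: form the error equations by subtracting \eqref{e39}--\eqref{e39*} from \eqref{dise1}--\eqref{dise3}, use Lemma \ref{le5} to verify the hypothesis \eqref{ediv} so that Lemma \ref{cross} absorbs the non-$\mu$-weighted $\gamma$-terms, bound the pressure error by the duality Lemma \ref{dual-problem}, and close with Gronwall and the triangle inequality against the interpolation error. However, there is a concrete gap exactly at the point you flag as ``the main obstacle'': you test only with $\mathbf{v}=\mathbf{e}^n$, but the pressure-coupling term $(e^{p,n},d_xe^{x,n}+d_ye^{y,n})_{l^2,M}$ must be bounded through the duality estimate \eqref{forep}, which controls $\|e^{p,n}\|_{l^2,M}$ by $\|d_t\mathbf{e}^n\|_{l^2}$ and $\mu\|D\mathbf{e}^n\|_{l^2}$ \emph{plus} truncation data. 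The quantity $\|d_t\mathbf{e}^n\|_{l^2}^2$ is not produced on the left-hand side of the $\mathbf{e}^n$-test, so your single-stage Gronwall loop does not close. The paper resolves this with a two-stage energy argument: it first tests with $\mathbf{v}=d_t\mathbf{e}^n$ and sums in time (using the second line of Lemma \ref{le5} to apply Lemma \ref{cross} to $d_t\mathbf{e}^n$), obtaining the a priori bound \eqref{4.24} on $\sum_n\Delta t\|d_t\mathbf{e}^n\|_{l^2}^2+\mu\|D\mathbf{e}^m\|_{l^2}^2$; only then does it test with $\mathbf{e}^n$, at which point \eqref{forep} combined with \eqref{4.24} renders the pressure term harmless. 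Your remark that ``one actually only needs this quantity to appear with a small coefficient against $\|\mathbf{e}^n\|_{l^2}$ plus data'' is not correct as stated, because $\|d_t\mathbf{e}^n\|_{l^2}$ is not data at that stage.

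For the $l^\infty$ estimate \eqref{e_uniform_erroru} your route also diverges from the paper's and is under-justified. The discrete Sobolev inequality $\|\cdot\|_{l^\infty}\leq C|\ln h|^{1/2}\|D\cdot\|_{l^2}$ in two dimensions leaves a logarithmic factor that is not removed by matching $\Delta t=O(h^2)$ to the spatial order; you give no mechanism for eliminating it. The paper instead uses a Strang-type asymptotic expansion (Lemma \ref{final} in Appendix B): one constructs $\hat{\mathbf{u}}=\mathbf{u}+h^2\tilde{\mathbf{u}}_1+h^4\tilde{\mathbf{u}}_2$ satisfying the discrete equations to $O(h^6)$ on uniform grids, so that the $l^2$ error analysis gives $\|\mathbf{W}^m-\hat{\mathbf{u}}^m\|_{l^2}=O(h^4)$, and an inverse inequality then yields an $l^\infty$ bound of order $h^3$ for that difference, with the remaining $O(h^2)$ coming from $\hat{\mathbf{u}}-\mathbf{u}$ itself. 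Without this (or an equivalently sharp discrete Green's function estimate, which you mention but do not develop), the uniform-grid claim is not established.
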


\begin{proof}
   Subtracting (\ref{e39}) from (\ref{dise1}) and recalling that $\mathbf{W}^{(1),n}=\mathbf{W}^n$, we can obtain
   \begin{equation}\label{e40}
   \aligned
   &d_{t}{e}^{x,n}_{i,j+1/2}-\mu \frac{d_xe^{x,n}_{i+1/2,j+1/2}-d_xe^{x,n}_{i-1/2,j+1/2}}{h_i}
   -\mu \frac{D_ye^{x,n}_{i,j+1}-D_ye^{x,n}_{i,j}}{k_{j+1/2}}+D_xe^{p,n}_{i,j+1/2}\\
   =&E^{x,n}_{i,j+1/2}+ \frac{\gamma^{x,n}_{i+1/2,j+1/2}-\gamma^{x,n}_{i-1/2,j+1/2}}{h_i}+\mu \frac{\epsilon^{x,n}_{i+1/2,j+1/2}-\epsilon^{x,n}_{i-1/2,j+1/2}}{h_i}
   +\mu \frac{\epsilon^{x,y,n}_{i,j+1}-\epsilon^{x,y,n}_{i,j}}{k_{j+1/2}}.
   \endaligned
   \end{equation}
   
   For a discrete function $\{v^{x}_{i,j+1/2}\}$ such that $v^{x}_{i,j+1/2}|_{\partial \Omega}=0$,  let us multiply  (\ref{e40}) by $h_ik_{j+1/2}v^{x}_{i,j+1/2}$
   and sum for $i,j$ with $i=1,\cdots,N_x-1,~j=0,\cdots,N_y-1$. By Lemma \ref{le2}, we can have that
   \begin{equation}\label{e41}
   \aligned
   &(d_te^{x,n},v^{x})_{l^2,T,M}+\mu (d_xe^{x,n},d_xv^{x})_{l^2,M}+\mu (D_ye^{x,n},D_yv^{x})_{l^2,T_y}-(e^{p,n},d_xv^{x})_{l^2,M}\\
   =&(D_x\gamma^{x,n},v^{x})_{l^2,T,M}-\mu(\epsilon^{x,n},d_xv^{x})_{l^2,M}-\mu (\epsilon^{x,y,n},D_yv^{x})_{l^2,T_y}
   +(E^{x,n},v^{x})_{l^2,T,M},
   \endaligned
   \end{equation}
   Similarly in the $y$-direction, we have that for discrete function $\{v^{y}_{i+1/2,j}\}$ such that $v^{y}_{i+1/2,j}|_{\partial \Omega}=0$
   \begin{equation}\label{e42}
   \aligned
   &(d_te^{y,n},v^{y})_{l^2,M,T}+\mu (d_ye^{y,n},d_yv^{y})_{l^2,M}+\mu (D_xe^{y,n},D_xv^{y})_{l^2,T_x}-(e^{p,n},d_yv^{y})_{l^2,M}\\
   =&(D_y\gamma^{y,n},v^{y})_{l^2,M,T}-\mu(\epsilon^{y,n},d_yv^{y})_{l^2,M}-\mu (\epsilon^{y,x,n},D_xv^{y})_{l^2,T_x}
   +(E^{y,n},v^{y})_{l^2,M,T},
   \endaligned
   \end{equation}
   Adding (\ref{e41}) and (\ref{e42}) yields
   \begin{equation}\label{e43}
   \aligned
   &(d_te^{x,n},v^{x})_{l^2,T,M}+(d_te^{y,n},v^{y})_{l^2,M,T}
   +\mu (d_xe^{x,n},d_xv^{x})_{l^2,M}\\
   &+\mu (D_ye^{x,n},D_yv^{x})_{l^2,T_y}
   +\mu (d_ye^{y,n},d_yv^{y})_{l^2,M}+\mu (D_xe^{y,n},D_xv^{y})_{l^2,T_x}\\
   &-(e^{p,n},d_xv^{x}+d_yv^{y})_{l^2,M}\\
   =&(D_x\gamma^{x,n},v^{x})_{l^2,T,M} + (D_y\gamma^{y,n},v^{y})_{l^2,M,T}-\mu(\epsilon^{x,n},d_xv^{x})_{l^2,M}\\
   &-\mu (\epsilon^{x,y,n},D_yv^{x})_{l^2,T_y}
   -\mu(\epsilon^{y,n},d_yv^{y})_{l^2,M}-\mu (\epsilon^{y,x,n},D_xv^{y})_{l^2,T_x}\\
   &+(E^{x,n},v^{x})_{l^2,T,M}+(E^{y,n},v^{y})_{l^2,M,T}.
   \endaligned
   \end{equation}
Recalling Lemma \ref{dual-problem}, we have
   \begin{equation}\label{forep}
   \aligned
   & \|e^{p,n}\|^2_{l^2,M} = (e^{p,n},d_x v^{x,n} + d_y v^{y,n})_{l^2,M}\\
   &=(d_te^{x,n},v^{x})_{l^2,T,M}+(d_te^{y,n},v^{y})_{l^2,M,T}
   +\mu (d_xe^{x,n},d_xv^{x})_{l^2,M} +\mu (D_ye^{x,n},D_yv^{x})_{l^2,T_y}
   +\mu (d_ye^{y,n},d_yv^{y})_{l^2,M} \\
   &~~+\mu (D_xe^{y,n},D_xv^{y})_{l^2,T_x}+\mu(\epsilon^{x,n},d_xv^{x})_{l^2,M}+\mu (\epsilon^{x,y,n},D_yv^{x})_{l^2,T_y}
   +\mu(\epsilon^{y,n},d_yv^{y})_{l^2,M}\\
   &~~+\mu (\epsilon^{y,x,n},D_xv^{y})_{l^2,T_x}+(\gamma^{x,n},d_x v^x)_{l^2,M}+(\gamma^{y,n},d_y v^y)_{l^2,M} - (E^{x,n},v^{x})_{l^2,T,M}-(E^{y,n},v^{y})_{l^2,M,T}\\
   &\leq \frac{C}{2} (\|d_t \mathbf{e}^n\|^2_{l^2} + \mu^2\|D\mathbf{e}^n\|^2_{l^2}) + \frac{1}{2}\|e^{p,n}\|^2_{l^2,M}+ (1+\mu+\mu^2)O(\|\mathbf{u}\|_{W^{2,\infty}(J;W^{4,\infty}(\Omega))} (h^2+k^2+\Delta t))^2,
   \endaligned
   \end{equation}
   where $\mathbf{v}^n \in \mathbf{V}_h$.
   Noting that
   $$(q^n,d_tq^n) = \frac{1}{2}d_t\|q^n\|^2 + \frac{\Delta t}{2}\|d_tq^n\|^2,$$
   and setting $\mathbf{v} = d_t\mathbf{e}^n$ in (\ref{e43}), and summing for $1\leq n \leq m \leq N$, we have that  
   \begin{equation}\label{e43+}
   \aligned
   &2\sum_{n = 1}^{m}\Delta t \|d_t\mathbf{e}^n\|^2_{l^2} +\mu\|D\mathbf{e}^m\|^2_{l^2}-\mu\|D\mathbf{e}^0\|^2_{l^2}\\
   \leq& 2\sum_{n = 1}^{m}\Delta t (e^{p,n},d_t(d_x e^{x,n} + d_y e^{y,n}))_{l^2,M} + \mu\sum_{n=2}^{m}\Delta t \left((d_t\epsilon^{x,n},d_xe^{x,n-1})_{l^2,M} \right.\\
   &\left.+(d_t\epsilon^{x,y,n},D_ye^{x,n-1})_{l^2,T_y}
   +(d_t\epsilon^{y,n},d_ye^{y,n-1})_{l^2,M}
   +(d_t\epsilon^{y,x,n},D_xe^{y,n-1})_{l^2,T_x}\right)\\
   &- \mu((\epsilon^{x,m},d_xe^{x,m})_{l^2,M} - (\epsilon^{x,y,m},D_ye^{x,m})_{l^2,T_y} - (\epsilon^{y,m},d_ye^{y,m})_{l^2,M}- (\epsilon^{y,x,m},D_xe^{y,m})_{l^2,T_x} )\\
   & + \mu ((\epsilon^{x,1},d_xe^{x,0})_{l^2,M} + (\epsilon^{x,y,1},D_ye^{x,0})_{l^2,T_y} + (\epsilon^{y,1},d_ye^{y,0})_{l^2,M} + (\epsilon^{y,x,1},D_xe^{y,0})_{l^2,T_x})\\
   &+ 2 \sum_{n=1}^{m} \Delta t \left((E^{x,n},d_te^{x,n})_{l^2,T,M}+(E^{y,n},d_te^{y,n})_{l^2,M,T}+(D_x\gamma^{x,n},d_te^{x,n})_{l^2,T,M}+(D_y\gamma^{y,n},d_te^{y,n})_{l^2,M,T}\right) 
   \endaligned
   \end{equation}
   By using Lemma \ref{cross}, (\ref{forep}) and noting that $\mathbf{e}^0 = 0$, we can get that
   \begin{equation}
   \aligned
   2\sum_{n = 1}^{m}\Delta t \|d_t\mathbf{e}^n\|^2_{l^2} + \mu \|D\mathbf{e}^m\|^2_{l^2} \leq& \sum_{n=1}^{m} \Delta t \|d_t\mathbf{e}^n\|^2_{l^2} +  \frac{\mu+\mu^2}{2}\sum_{n=0}^{m}\Delta t \|D\mathbf{e}^n\|^2_{l^2} + \frac{\mu}{2}\|D\mathbf{e}^m\|^2_{l^2} \\
   &+ (1+\mu+\mu^2)O(\|\mathbf{u}\|_{W^{2,\infty}(J;W^{4,\infty}(\Omega))} (h^2+k^2+\Delta t))^2.
   \endaligned
   \end{equation}
   Applying Gronwall's inequality, we conclude that
   \begin{equation}\label{4.24}
   \aligned
   &\sum_{n = 1}^{m}\Delta t \|d_t\mathbf{e}^n\|^2_{l^2} + \mu \|D\mathbf{e}^m\|^2_{l^2} \leq e^{\mu T}(1+\mu+\mu^2)O(\|\mathbf{u}\|_{W^{2,\infty}(J;W^{4,\infty}(\Omega))} (h^2+k^2+\Delta t))^2.
   \endaligned
   \end{equation}
   
   Next setting $\mathbf{v}=\mathbf{e}^n$ in (\ref{e43}) and summing for $n$ from $1$ to $m$($m \le N$), we have that
   \begin{equation}
   \aligned
   &\|\mathbf{e}^m\|_{l^2}^2- \|\mathbf{e}^0\|_{l^2}^2+2\mu \sum_{n = 1}^{m} \Delta t \|D\mathbf{e}^n\|^2_{l^2} \\
   \leq &\sum_{n=1}^{m}2 \Delta t((e^{p,n},d_xe^{x,n}+d_ye^{y,n})_{l^2,M}-\mu(\epsilon^{x,n},d_xe^{x,n})_{l^2,M}-\mu(\epsilon^{y,n},d_ye^{y,n})_{l^2,M}\\
   &- \mu(\epsilon^{x,y,n},D_ye^{x,n})_{l^2,T_y}-\mu(\epsilon^{y,x,n},D_x e^{y,n})_{l^2,T_x}(D_x\gamma^{x,n},e^{x,n})_{l^2,T,M}\\
   &+(D_y\gamma^{y,n},e^{y,n})_{l^2,M,T}+(E^{x,n},e^{x,n})_{l^2,T,M}+(E^{y,n},e^{y,n})_{l^2,M,T}).
   \endaligned
   \end{equation}
   Invoking (\ref{forep}) and using the Cauchy-Schwarz inequality results in
   \begin{equation}
   \aligned
   &\|\mathbf{e}^m\|_{l^2}^2 + \mu \sum_{n = 1}^{m} \Delta t \|D\mathbf{e}^n\|^2_{l^2}
   \leq \sum_{n = 0}^{m-1}\Delta t\|\mathbf{e}^m\|_{l^2}^2+e^{\mu T}(1+\mu+\mu^2+\mu^3)O(\|\mathbf{u}\|_{W^{2,\infty}(J;W^{4,\infty}(\Omega))} (h^2+k^2+\Delta t))^2.
   \endaligned
   \end{equation}
   Finally applying Gronwall's inequality, we get that
   \begin{equation}\label{mu1}
   \aligned
   &\|\mathbf{e}^m\|_{l^2}^2 \leq e^{\mu T}(1+\mu+\mu^2+\mu^3)O(\|\mathbf{u}\|_{W^{2,\infty}(J;W^{4,\infty}(\Omega))} (h^2+k^2+\Delta t))^2,
   \endaligned
   \end{equation}
   which leads to the desired result \eqref{e_nonuniform_erroru}. In addition, the proof details for \eqref{e_uniform_erroru} on uniform grids will be proved in the Appendix B for brevity.
   
\end{proof}
\subsection{Error Estimates for the Pressure}
    We now analyze the pressure error estimates in the discrete $l^\infty$ norm in time. 
    \begin{thm}\label{th10}
    	Suppose that the analytical solutions $\textbf{u}$ and $p$ are sufficiently smooth. For the constructed RMAC scheme (\ref{e7})-(\ref{e9}), there exists a positive constant $C$ independent of $h,~k,~\Delta t,$ such that for  $1 \leq m \leq N$
    	\begin{align} \label{e_pressure_nonuniform}
    	\|Z^{m}-p^{m}\|_{l^2}&\leq C(\mathbf{u},\partial_t\mathbf{u},\partial_{tt}\mathbf{u},\partial_{ttt}\mathbf{u},p,T)(h^2+k^2+\Delta t).
    	\end{align}
    	Furthermore, we can obtain the stronger error estimates with $\Delta t = O(h^2)$ on uniform grids,
    	\begin{align} \label{e_pressure_uniform}
    	\|Z^{m}-p^{m}\|_{l^\infty}&\leq C(\mathbf{u},\partial_t\mathbf{u},\partial_{tt}\mathbf{u},\partial_{ttt}\mathbf{u},p,T)(h^2+k^2+\Delta t).
    	\end{align}
    \end{thm}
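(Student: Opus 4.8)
The plan is to transfer the estimate for the pressure error $Z^m-p^m$ onto the quantity $e^{p,m}=Z^{(1),m}$ that is already controlled in the velocity analysis. By the splitting \eqref{superposition1}--\eqref{superposition2} and the discrete invariance of Lemma~\ref{lem:discrete_invariance} one has $Z^m=Z^{(1),m}+P_h p^m$, whence
\[
\|Z^m-p^m\|_{l^2,M}\le \|e^{p,m}\|_{l^2,M}+\|P_h p^m-p^m\|_{l^2,M}.
\]
The last term is the norm of the (negative of the) discrete mean of $p^m$; since $\int_\Omega p^m\,dx=0$, midpoint quadrature gives $\|P_hp^m-p^m\|_{l^2,M}\le C\|p\|_{L^\infty(J;W^{2,\infty})}(h^2+k^2)$, which is where $p$ enters the constant. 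It then remains to bound $\|e^{p,m}\|_{l^2,M}$, for which I would reuse the inf--sup identity \eqref{forep}; after absorbing $\tfrac12\|e^{p,m}\|^2$ it yields
\[
\|e^{p,m}\|^2_{l^2,M}\le C\big(\|d_t\mathbf e^m\|^2_{l^2}+\mu^2\|D\mathbf e^m\|^2_{l^2}\big)+(1+\mu+\mu^2)\,O(h^2+k^2+\Delta t)^2 .
\]
Since $\mu^2\|D\mathbf e^m\|^2_{l^2}=\mu\cdot(\mu\|D\mathbf e^m\|^2_{l^2})$ is already $O(h^2+k^2+\Delta t)^2$ by \eqref{4.24}, the whole pressure estimate reduces to an $l^\infty$-in-time bound on $\|d_t\mathbf e^m\|_{l^2}$, which Theorem~\ref{th9} does not supply (it gives only the time-summed quantity $\sum_n\Delta t\|d_t\mathbf e^n\|^2_{l^2}$).

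To obtain $\max_{1\le m\le N}\|d_t\mathbf e^m\|_{l^2}\le C(h^2+k^2+\Delta t)$ I would difference the error identity \eqref{e43} in time, test with $\mathbf v=d_t\mathbf e^n=:\boldsymbol\xi^n\in\mathbf V_h$, and sum for $n=2,\dots,m$. The term $(d_t\boldsymbol\xi^n,\boldsymbol\xi^n)$ telescopes to $\tfrac12(\|\boldsymbol\xi^m\|^2-\|\boldsymbol\xi^1\|^2)$ up to a nonnegative remainder, $\mu\sum\Delta t\|D\boldsymbol\xi^n\|^2\ge0$, and the truncation on the right now carries one extra temporal derivative: $d_tE^{x,n},d_tE^{y,n}=(1+\mu)O(\|\mathbf u\|_{W^{3,\infty}(J;W^{4,\infty})}(h^2+k^2+\Delta t))$ — precisely the source of the additional regularity $\partial_{ttt}\mathbf u$ — while $d_t\gamma^n$ and $d_t\epsilon^n$ retain the $O(h^2+k^2)$ size of $\gamma^n,\epsilon^n$. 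The mixed terms $(D_xd_t\gamma^{x,n},\xi^{x,n})+(D_yd_t\gamma^{y,n},\xi^{y,n})$ are treated exactly as in Lemma~\ref{cross}, which applies because $d_x\xi^{x,n}+d_y\xi^{y,n}=d_t(d_xe^{x,n}+d_ye^{y,n})=O(h^2+k^2+\Delta t)$ by Lemma~\ref{le5}. The delicate contribution is the pressure one, $-\sum_{n=2}^m\Delta t(d_te^{p,n},d_x\xi^{x,n}+d_y\xi^{y,n})$: since $\boldsymbol\xi^n$ is not discretely divergence-free, I would write $d_x\xi^{x,n}+d_y\xi^{y,n}=d_tg^n$ with $g^n:=d_xe^{x,n}+d_ye^{y,n}$, perform discrete summation by parts in time to reach $(e^{p,m},d_tg^m)-(e^{p,1},d_tg^2)-\Delta t\sum_n(e^{p,n},d_{tt}g^{n+1})$, bound every $g$-factor by $O(h^2+k^2+\Delta t)$ via Lemma~\ref{le5}, and then close using \eqref{forep} once more (to trade the $\|e^{p,\cdot}\|$ factors for $\|d_t\mathbf e^{\cdot}\|$ and absorb) together with \eqref{4.24} for the time-summed pieces. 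Because $\sum_n\Delta t\|\boldsymbol\xi^n\|^2_{l^2}$ is already bounded by \eqref{4.24}, no Gronwall step is actually required; what is needed is the first-step bound $\|\boldsymbol\xi^1\|_{l^2}=\|d_t\mathbf e^1\|_{l^2}\le C(h^2+k^2+\Delta t)$.

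That first-step bound I would establish separately: at $n=1$ the error equations form a discrete Stokes system for $(\mathbf e^1,e^{p,1})$ with source $\mathbf R^1$ and inhomogeneous discrete divergence $g^1=O(h^2+k^2)$, and using the compatibility $\widetilde{\mathbf u}^0=\mathbf u^0=0$ together with the inf--sup stability of Lemma~\ref{dual-problem} and the integration-by-parts trick for the $D_x\gamma^{x,1}$ part (Lemma~\ref{le2}) one gets $\|\mathbf e^1\|_{l^2}\le C\Delta t(h^2+k^2+\Delta t)$, as in the first-step analysis of \cite{LI20181499,Rui2016}. Collecting everything gives $\|e^{p,m}\|_{l^2,M}\le C(h^2+k^2+\Delta t)$ and hence \eqref{e_pressure_nonuniform}. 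The strengthened bound \eqref{e_pressure_uniform} in the discrete $l^\infty$ norm on uniform grids follows from the high-order consistency analysis above combined with a discrete maximum-norm (discrete Green's function / inverse-type) stability estimate for the discrete Stokes operator under $\Delta t=O(h^2)$, paralleling the $l^\infty$ velocity estimate \eqref{e_uniform_erroru}; I would relegate those technical details to Appendix~B. The main obstacle I expect is the pointwise-in-time control of $\|d_t\mathbf e^m\|_{l^2}$ — reconciling the non-divergence-free test function $d_t\mathbf e^n$ with the pressure term, which is resolved by summation by parts in time plus \eqref{forep}, and especially the first time-step estimate, which hinges on the compatibility of the initial data; the $l^\infty$ upgrade on uniform grids is a secondary, more technical obstacle.
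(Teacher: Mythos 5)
Your proposal is correct and follows essentially the same route as the paper: reduce $\|Z^m-p^m\|$ to $\|e^{p,m}\|$ plus the quadrature error of $P_hp^m$, control $\|e^{p,m}\|$ through the inf--sup identity \eqref{forep} by a pointwise-in-time bound on $\|d_t\mathbf e^m\|$, obtain that bound by time-differencing the error identity, testing with $d_t\mathbf e^n$, summing by parts in time against the pressure term with Lemma~\ref{le5} controlling the discrete divergence of the error, and treating the first step and the uniform-grid $l^\infty$ upgrade separately. Your observation that the Gronwall step is dispensable because $\sum_n\Delta t\|d_t\mathbf e^n\|^2_{l^2}$ is already controlled by \eqref{4.24} is a minor (valid) streamlining of the paper's argument, and your identification of $d_tE^n$ as the source of the $\partial_{ttt}\mathbf u$ regularity matches the paper's $W^{3,\infty}(J;W^{4,\infty}(\Omega))$ norms.
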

    
\begin{proof}
	By making the temporal difference quotient of (\ref{e40}), we have
	\begin{equation}\label{e40*}
	\aligned
	&d_td_{t}{e}^{x,n}_{i,j+1/2}-\mu \frac{d_xd_te^{x,n}_{i+1/2,j+1/2}-d_xd_te^{x,n}_{i-1/2,j+1/2}}{h_i}
	-\mu \frac{D_yd_te^{x,n}_{i,j+1}-D_yd_te^{x,n}_{i,j}}{k_{j+1/2}}+d_tD_xe^{p,n}_{i,j+1/2}\\
	=&d_tE^{x,n}_{i,j+1/2}+ \frac{d_t\gamma^{x,n}_{i+1/2,j+1/2}-d_t\gamma^{x,n}_{i-1/2,j+1/2}}{h_i}+\mu \frac{d_t\epsilon^{x,n}_{i+1/2,j+1/2}-d_t\epsilon^{x,n}_{i-1/2,j+1/2}}{h_i}
	+\mu \frac{d_t\epsilon^{x,y,n}_{i,j+1}-d_t\epsilon^{x,y,n}_{i,j}}{k_{j+1/2}}.
	\endaligned
	\end{equation}
	Similarly to (\ref{e43}), we can obtain that
	\begin{equation}\label{e43*}
	\aligned
	&(d_td_te^{x,n},v^{x})_{l^2,T,M}+(d_td_te^{y,n},v^{y})_{l^2,M,T}
	+\mu (d_xd_te^{x,n},d_xv^{x})_{l^2,M}\\
	&+\mu (D_yd_te^{x,n},D_yv^{x})_{l^2,T_y}
	+\mu (d_yd_te^{y,n},d_yv^{y})_{l^2,M}+\mu (D_xd_te^{y,n},D_xv^{y})_{l^2,T_x}\\
	&-(d_te^{p,n},d_xv^{x}+d_yv^{y})_{l^2,M}\\
	=&-\mu(d_t\epsilon^{x,n},d_xv^{x})_{l^2,M}-\mu (d_t\epsilon^{x,y,n},D_yv^{x})_{l^2,T_y}
	-\mu(d_t\epsilon^{y,n},d_yv^{y})_{l^2,M}\\
	&-\mu (d_t\epsilon^{y,x,n},D_xv^{y})_{l^2,T_x}+(D_xd_t\gamma^{x,n},d_te^{x,n})_{l^2,T,M}+(D_yd_t\gamma^{y,n},d_te^{y,n})_{l^2,M,T}\\
	&+(d_tE^{x,n},v^{x})_{l^2,T,M}+(d_tE^{y,n},v^{y})_{l^2,M,T}.
	\endaligned
	\end{equation}
	Setting $\mathbf{v} = d_t \mathbf{e}^n$ in (\ref{e43*}) and summing for $n \geq 2$, we can obtain that
	\begin{equation}
	\aligned
	&\|d_t \mathbf{e}^m\|^2_{l^2} + 2\mu \sum_{n = 2}^{m} \Delta t \|Dd_t\mathbf{e}^n\|^2_{l^2} \\
	&~~\leq \|d_t \mathbf{e}^1\|^2_{l^2} - 2\sum_{n = 2}^{m - 1} \Delta t (e^{p,n},d_xd_td_te^{x,n + 1}+d_yd_td_te^{y,n + 1})_{l^2,M}\\
	&~~~~+2(e^{p,m},d_xd_te^{x,m}+d_yd_te^{y,m})_{l^2,M} - 2(e^{p,1},d_xd_te^{x,2}+d_yd_te^{y,2})\\
	&~~~~-2\mu\sum_{n = 2}^{m}\Delta t((d_t\epsilon^{x,n},d_xd_te^{x,n})_{l^2,M}+ (d_t\epsilon^{x,y,n},D_yd_te^{x,n})_{l^2,T_y}
	+(d_t\epsilon^{y,n},d_yvd_te^{y,n})_{l^2,M}\\
	&~~~~+ (d_t\epsilon^{y,x,n},D_xd_te^{y,n})_{l^2,T_x})
	+2\sum_{n = 2}^{m}\Delta t((d_tE^{x,n},d_te^{x,n})_{l^2,T,M}+(d_tE^{y,n},d_te^{y,n})_{l^2,M,T})\\
	&~~~~+2\sum_{n = 2}^{m}\Delta t((d_tD_x\gamma^{x,n},d_te^{x,n})_{l^2,T,M}+(d_tD_y\gamma^{y,n},d_te^{y,n})_{l^2,M,T}).
	\endaligned
	\end{equation}
	Using Lemma \ref{cross} and (\ref{forep}) and the Cauchy-Schwarz inequality, we have
	\begin{equation}
	\aligned
	&\|d_t \mathbf{e}^m\|^2_{l^2} + 2\mu \sum_{n = 2}^{m} \Delta t \|Dd_t\mathbf{e}^n\|^2_{l^2} \\
	&~~\leq 4\|d_t\mathbf{e}^1\|^2_{l^2} +\sum_{n = 2}^{m - 1} \Delta t \|d_t \mathbf{e}^n\|^2_{l^2} + e^{\mu T}(1+\mu+\mu^2+\mu^3)O(\|\mathbf{u}\|_{W^{3,\infty}(J;W^{4,\infty}(\Omega))} (h^2+k^2+\Delta t))^2.
	\endaligned
	\end{equation}
	Applying Gronwall's inequality gives that
	\begin{equation}
	\aligned
	&\|d_t \mathbf{e}^m\|^2_{l^2}\leq 4e^T\|d_t\mathbf{e}^1\|^2_{l^2} + e^{\mu T}(1+\mu+\mu^2+\mu^3)O(\|\mathbf{u}\|_{W^{3,\infty}(J;W^{4,\infty}(\Omega))} (h^2+k^2+\Delta t))^2.
	\endaligned
	\end{equation}
	As for $\|d_t \mathbf{e}^1\|^2_{l^2}$, setting $\mathbf{v} = d_t\mathbf{e}^n$ in (\ref{e43}) and choosing $\Delta t$ sufficient small, we can easily obtain that
	\begin{equation}
	\|d_t\mathbf{e}^1\|^2_{l^2} \leq (1+\mu+\mu^2)O(\|\mathbf{u}\|_{W^{3,\infty}(J;W^{4,\infty}(\Omega))} (h^2+k^2+\Delta t))^2.
	\end{equation}
Hence
	\begin{equation}\label{4.39}
	\|d_t\mathbf{e}^m\|^2_{l^2} \leq e^{\mu T}(1+\mu+\mu^2+\mu^3)O(\|\mathbf{u}\|_{W^{3,\infty}(J;W^{4,\infty}(\Omega))} (h^2+k^2+\Delta t))^2.
	\end{equation}
	Combining (\ref{4.39}) with (\ref{forep}) leads to
	\begin{equation}\label{eptinfity}
	\aligned
	\|e^{p,m}\|^2_{l^2,M} \leq& e^{\mu T}(1+\mu+\mu^2+\mu^3)O(\|\mathbf{u}\|_{W^{3,\infty}(J;W^{4,\infty}(\Omega))} (h^2+k^2+\Delta t))^2.
	\endaligned
	\end{equation}
	Taking note of Lemma \ref{lem:discrete_invariance}, $\|Z^{m}-p^{m}\|^2_{l^2}$ can be controlled by 
	\begin{equation}
		\aligned
		&\|Z^{m}-p^{m}\|^2_{l^2,M} \\
		&~~\leq \|Z^{(1),m} - 0\|^2_{l^2,M} + \|(P_hp)^{m}-p^{m}\|^2_{l^2,M}\\
		&~~\leq e^{\mu T}(1+\mu+\mu^2+\mu^3)O(\|\mathbf{u}\|_{W^{3,\infty}(J;W^{4,\infty}(\Omega))} (h^2+k^2+\Delta t))^2 + C|(p,1)_{l^2}|^2\\
		&~~\leq e^{\mu T}(1+\mu+\mu^2+\mu^3)O(\|\mathbf{u}\|_{W^{3,\infty}(J;W^{4,\infty}(\Omega))} (h^2+k^2+\Delta t))^2 + O(\|p\|_{L^{\infty}(J;W^{2,\infty}(\Omega))} (h^2+k^2))^2,
		\endaligned
	\end{equation}
	which leads to the desired result \eqref{e_pressure_nonuniform}. In addition, the proof details for \eqref{e_pressure_uniform} on uniform grids will be proved in the Appendix B for brevity.
\end{proof}


\section{Physics-preserving Properties}
In this section, we will prove that the RMAC scheme \eqref{e7}-\eqref{e9} preserves the discrete unconditionally energy-dissipative property, momentum conservation, and angular momentum conservation in the absence of the extra force with the homogeneous boundary condition for the Stokes equations. Furthermore, inspired by \cite{gong2018second, li2020sav}, we construct a fully implicit RMAC scheme for Navier--Stokes equations, rigorously proving that this scheme inherits above discrete unconditionally energy-dissipative property, momentum conservation, and angular momentum conservation.

We define the kinetic energy $E: \mathbf{L}^2(\Omega)\rightarrow \mathbb{R}$, linear momentum $M: \mathbf{L}^2(\Omega)\rightarrow \mathbb{R}^{2}$, and (two-dimensional) angular momentum $M_{\mathbf{x}}: \mathbf{L}^2(\Omega)\rightarrow \mathbb{R}$ by
$$E\left(\mathbf{u}^{*}\right):=\frac{1}{2} \int_{\Omega}\left\lvert\mathbf{u}^{*}\right\rvert^{2} {~d} \mathbf{x},\quad
M\left(\mathbf{u}^{*}\right):=\int_{\Omega} \mathbf{u}^{*} ~{d} \mathbf{x},\quad
M_{\mathbf{x}}\left(\mathbf{u}^{*}\right):=\int_{\Omega} \mathbf{u}^{*} \times \mathbf{x} ~{d} \mathbf{x},$$
for any $\mathbf{u}^{*}=(u_*^1, u_*^2)\in \mathbf{L}^2(\Omega)$, where $\mathbf{x}=(x,y)^\top$ and $\mathbf{u}^{*} \times \mathbf{x}=u_*^1y-u_*^2x$ can be 
regarded as the third component of $(u_*^1, u_*^2,0)^\top\times (x,y,0)^\top$. Similarly to \cite{charnyi2017conservation}, for momentum analysis we assume that the discrete solutions are compactly supported, thereby eliminating the influence of boundary value.

\subsection{Physics-preserving Properties for Stokes Equations}
In this section, we first prove the discrete unconditionally energy-dissipative property, momentum conservation, and angular momentum conservation for the Stokes equations.

\begin{thm}\label{lem_energy}
	The local mass conservation law holds for the constructed RMAC scheme \eqref{e7}-\eqref{e9}. Additionally, in the absence of the extra force $\textbf{g}$, the solutions of MAC scheme \eqref{e7}-\eqref{e9} can satisfy the discrete unconditional energy dissipation law
	\begin{equation}\label{e_energy}
	\aligned
	&  \frac{1}{2}\|\mathbf{W}^{n}\|^2 - \frac{1}{2}\|\mathbf{W}^{n-1}\|^2  \leq - \Delta t \mu \|D\mathbf{W}^n\|^2 \leq 0.
	\endaligned
	\end{equation}
	If we further assume that there only exists a strictly interior subdomain $\hat{\Omega} \subset [x_1,~x_{Nx-1}]\times[y_1,~y_{Ny-1}]$ such that $\textbf{W}$ and $Z$ vanish on $\Omega\setminus\hat{\Omega}$, 
	the solutions of MAC scheme \eqref{e7}-\eqref{e9} can satisfy
	momentum conservation
	\begin{equation}\label{e_momentum}
	\aligned
	( \mathbf{W}^{n} , \mathbf{e}_i  ) = ( \mathbf{W}^{n-1} ,  \mathbf{e}_i ), 
	\endaligned
	\end{equation}
	and  angular momentum conservation
	\begin{equation}\label{e_angular}
	\aligned
	( \mathbf{W}^{n} ,  \hat{\mathbf{x}}) = ( \mathbf{W}^{n-1} ,  \hat{\mathbf{x}} ),
	\endaligned
	\end{equation}
	where we denote by $\mathbf{e}_i \ (i =1,2)$ the unit coordinate vectors in  $x$- and $y$-directions and $\hat{\mathbf{x}} = (y, -x)$.
	\end{thm}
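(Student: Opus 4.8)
The plan is to test the discrete momentum equations \eqref{e7}--\eqref{e8} against carefully chosen discrete vector fields and exploit the discrete divergence-free condition \eqref{e9} together with the discrete integration-by-parts identities of Lemma~\ref{le2}. For the local mass conservation law, the statement is immediate: \eqref{e9} is precisely the discrete divergence-free relation on each cell $\Omega_{i+1/2,j+1/2}$, so integrating the flux balance over that cell gives exact conservation. For the energy dissipation law \eqref{e_energy}, I would take the $l^2$ inner product of \eqref{e7} with $W^{x,n}_{i,j+1/2}\,h_i k_{j+1/2}$ and of \eqref{e8} with $W^{y,n}_{i+1/2,j}\,h_{i+1/2}k_j$, sum, and add. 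The pressure terms combine (via Lemma~\ref{le2}) into $-(Z^n, d_x W^{x,n}+d_y W^{y,n})_{l^2,M}$, which vanishes by \eqref{e9}. The viscous terms yield $\mu\|D\mathbf{W}^n\|_{l^2}^2$ after another application of Lemma~\ref{le2}, and the identity $(d_t W^n, W^n)=\tfrac12 d_t\|W^n\|^2+\tfrac{\Delta t}{2}\|d_t W^n\|^2$ handles the time-derivative term; dropping the nonnegative $\tfrac{\Delta t}{2}\|d_t\mathbf{W}^n\|^2$ term and $\mathbf{g}=0$ gives \eqref{e_energy} with equality weakened to inequality.

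For momentum conservation \eqref{e_momentum}, I would test \eqref{e7} against the constant field $v^x\equiv 1$ (i.e. multiply by $h_i k_{j+1/2}$ and sum over all interior indices), using the compact-support assumption so that all boundary-adjacent terms vanish. The key observations are: (i) every divided difference $d_x, D_y, D_x, d_y$ of a quantity telescopes to zero when summed against a constant, because the support is strictly interior; hence the viscous terms and the pressure gradient term $D_x Z^n$ contribute nothing; (ii) the same telescoping kills the pressure contribution without needing \eqref{e9}. What survives is $\sum_{i,j} h_i k_{j+1/2}\, d_t W^{x,n}_{i,j+1/2}=d_t (\mathbf{W}^n,\mathbf{e}_1)=0$, and symmetrically for the $y$-component with $\mathbf{e}_2$. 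For angular momentum \eqref{e_angular}, I would instead test \eqref{e7} against the discrete field $y_{j+1/2}$ and \eqref{e8} against $-x_{i+1/2}$. Now the divided differences of the viscous terms, summed against a linear weight, no longer telescope to zero by themselves; here one uses summation by parts (Lemma~\ref{le2}) to move the difference onto the linear weight, whose second divided difference vanishes, so the viscous contribution is zero — this is the discrete analogue of $\int \Delta\mathbf{u}\times\mathbf{x}=0$ for compactly supported $\mathbf{u}$. The pressure term $(D_x Z^n, y)_{l^2,T,M}+(D_y Z^n,-x)_{l^2,M,T}$ transforms via Lemma~\ref{le2} into $-(Z^n, d_x y - d_y x)_{l^2,M}$; since $d_x$ of a function of $y$ alone is zero and likewise $d_y x=0$ after the appropriate MAC-staggering bookkeeping, this vanishes. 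Again the surviving term is $d_t\big((\mathbf{W}^n,\hat{\mathbf{x}})\big)=0$.

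The main obstacle I anticipate is the angular-momentum computation, specifically verifying that the viscous and pressure terms genuinely vanish on the \emph{staggered, non-uniform} MAC grid rather than merely in the formal continuous limit. On non-uniform grids the "linear weight has vanishing second difference" heuristic must be checked against the precise definitions of $h_i, k_j, h_{i+1/2}$ etc.\ in \eqref{e3}: one needs, e.g., that $[d_x(\text{linear in }x)]$ is constant and that the composite operator $\tfrac{1}{h_i}(d_x(\cdot)_{i+1/2}-d_x(\cdot)_{i-1/2})$ applied to a linear function, weighted by $h_i k_{j+1/2}$ and summed, telescopes exactly. It does, because $d_x$ of a linear function equals its slope on every edge regardless of mesh spacing, so the inner difference of a constant is zero. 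The pressure term is cleaner but still requires care: after summation by parts the weight $y_{j+1/2}$ (resp.\ $-x_{i+1/2}$) is evaluated at pressure nodes, and one must check $d_x$ annihilates the $y$-dependent weight and $d_y$ annihilates the $x$-dependent weight on the correct index sets — true by construction of the MAC staggering. Once these bookkeeping points are settled, assembling \eqref{e_energy}--\eqref{e_angular} is routine.
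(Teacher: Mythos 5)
Your proposal is correct and follows essentially the same route as the paper: testing \eqref{e7}--\eqref{e8} against $\mathbf{W}^n$ for the energy law (with the pressure term killed by \eqref{e9} and the identity $(d_tW^n,W^n)=\tfrac12 d_t\|W^n\|^2+\tfrac{\Delta t}{2}\|d_tW^n\|^2$), against constants for momentum, and against $y_{j+1/2}$ and $-x_{i+1/2}$ for angular momentum, with the compact-support assumption \eqref{e66} making all telescoping sums vanish. Your bookkeeping of the staggered-grid details in the angular-momentum step is in fact more explicit than the paper's, which states the conclusion directly after the multiplication-and-summation step.
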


\medskip
\begin{proof}
	Recalling \eqref{e9}, we can easily obtain the local mass conservation law and multiplying \eqref{e7} by $h_i k_{j+1/2} W^{x,n}_{i,j+1/2}$ and summing for $i,j$ with $i=1,\cdots,N_x-1,~j=0,\cdots,N_y-1$, we have 
	\begin{equation}\label{e_energy1}
	\aligned
	&(d_tW^{x,n},W^{x,n})_{l^2,T,M}+\mu (d_xW^{x,n},d_xW^{x,n})_{l^2,M}+\mu (D_yW^{x,n},D_yW^{x,n})_{l^2,T_y}-(Z^{n},d_xW^{x,n})_{l^2,M} \\
	=& \frac{ \| W^{x,n} \|_{l^2,T,M}^2 - \| W^{x,n-1} \|_{l^2,T,M}^2 } { 2 \Delta t} + \frac{ \| W^{x,n} -  W^{x,n-1}   \|_{l^2,T,M}^2  } { 2 \Delta t} + \mu (d_xW^{x,n},d_xW^{x,n})_{l^2,M} \\
	&+\mu (D_yW^{x,n},D_yW^{x,n})_{l^2,T_y}-(Z^{n},d_xW^{x,n})_{l^2,M} = 0.
	\endaligned
	\end{equation}
	
	Similarly in the $y$-direction, we have that
	\begin{equation}\label{e_energy2}
	\aligned
	&(d_tW^{y,n},W^{y,n})_{l^2,M,T}+\mu (d_yW^{y,n},d_yW^{y,n})_{l^2,M}+\mu (D_xW^{y,n},D_xW^{y,n})_{l^2,T_x}-(Z^{n},d_yW^{y,n})_{l^2,M} \\
	=& \frac{ \| W^{y,n} \|_{l^2,M,T}^2 - \| W^{y,n-1} \|_{l^2,M,T}^2 } { 2 \Delta t} + \frac{ \| W^{y,n} -  W^{y,n-1}   \|_{l^2,M,T}^2  } { 2 \Delta t} + \mu (d_yW^{y,n},d_yW^{y,n})_{l^2,M} \\
	&+\mu (D_xW^{y,n},D_xW^{y,n})_{l^2,T_x}-(Z^{n},d_yW^{y,n})_{l^2,M} = 0.
	\endaligned
	\end{equation}
	
	Then combining \eqref{e_energy1} and \eqref{e_energy2}, we can obtain
	\begin{equation}\label{e_energy3}
	\aligned
	& \frac{ \|\mathbf{W}^{n}\|^2 -   \|\mathbf{W}^{n-1}\|^2 }{ 2 \Delta t } \leq -  \frac{ \|\mathbf{W}^{n} - \mathbf{W}^{n-1} \|^2 }{ 2 \Delta t } - \mu \|D\mathbf{W}^n\|^2 \leq 0,
	\endaligned
	\end{equation}
	which leads to the desired energy stability result \eqref{e_energy}. 
	
	Next we shall prove the momentum conservation. Under the assumption that there only exists a strictly interior subdomain $\hat{\Omega} \subset [x_1,~x_{Nx-1}]\times[y_1,~y_{Ny-1}]$ such that $\textbf{W}$ and $Z$ vanish on $\Omega\setminus\hat{\Omega}$ and in the absence of the extra force $\textbf{g}$, it is reasonable to get the boundary condition of the differential compact support that
	\begin{figure}
		\centering
		\includegraphics[width=0.4\linewidth]{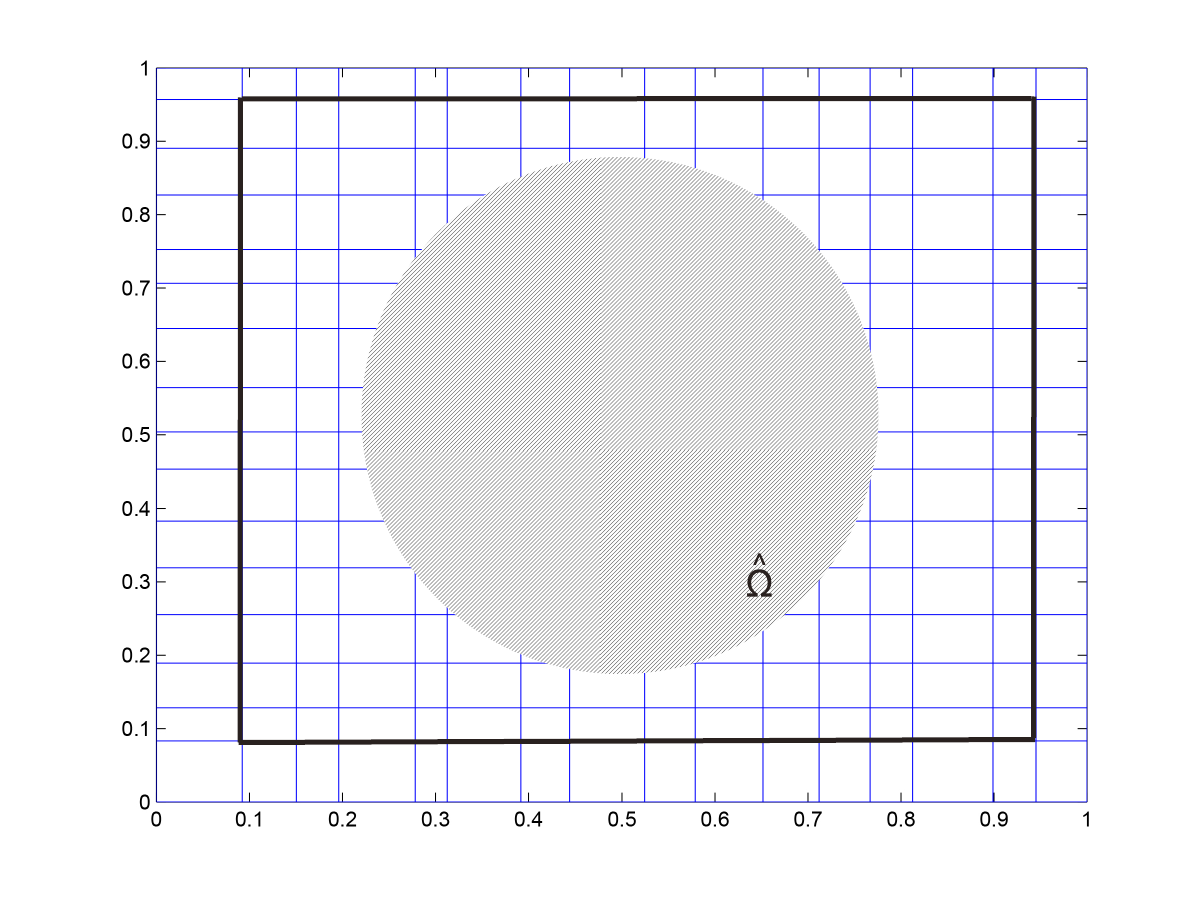}
		\caption{The differential compact support}
	\end{figure}
	\begin{equation}\label{e66}
	\left\{
	\begin{array}{l}
	\displaystyle W_{0,j+1/2}^{x,n}=W_{1,j+1/2}^{x,n}=W_{N_x-1,j+1/2}^{x,n}=W_{N_x,j+1/2}^{x,n}=0,\quad 0\leq j\leq N_y-1,\\
	\displaystyle W_{i,0}^{x,n}=W_{i,1/2}^{x,n}=W_{i,N_y-1/2}^{x,n}=W_{i,N_y}^{x,n}=0,~~~~~~~~~\quad 0\leq i\leq N_x,\\
	\displaystyle  W_{0,j}^{y,n}=W_{1/2,j}^{y,n}=W_{N_x-1/2,j}^{y,n}=W_{N_x,j}^{y,n}=0,~~~~~~~~~\quad 0\leq j\leq N_y,\\
	\displaystyle W_{i+1/2,0}^{y,n}=W_{i+1/2,1}^{y,n}=W_{i+1/2,N_y-1}^{y,n}=W_{i+1/2,N_y}^{y,n}=0,~\quad 0\leq i\leq N_x-1,\\
	Z^n_{1/2, j+1/2} = Z^n_{i+1/2, 1/2} = 0.
	\end{array}
	\right.
	\end{equation}
	Multiplying \eqref{e7} by $h_i k_{j+1/2} \times 1$ and summing for $i,j$ with $i=1,\cdots,N_x-1,~j=0,\cdots,N_y-1$, and multiplying \eqref{e8} by $ h_{i+1/2} k_j \times 1$ and summing for $i,j$ with $i=0,\cdots,N_x-1,~j=1,\cdots,N_y-1$, we have
	\begin{equation}\label{e_momentum1}
	\aligned
	& ( \mathbf{W}^{n} , \mathbf{e}_i) = ( \mathbf{W}^{n-1} ,  \mathbf{e}_i ),\quad i=1,2.
	\endaligned
	\end{equation}
	Similarly, multiplying \eqref{e7} by $h_i k_{j+1/2} (y_{j+1/2})$ and summing for $i,j$ with $i=1,\cdots,N_x-1,~j=0,\cdots,N_y-1$, and multiplying \eqref{e8} by $ h_{i+1/2} k_j (-x_{i+1/2})$ and summing for $i,j$ with $i=0,\cdots,N_x-1,~j=1,\cdots,N_y-1$, we have
	\begin{equation}\label{e_angular1}
	\aligned
	& ( \mathbf{W}^{n} , \hat{\mathbf{x}} ) = ( \mathbf{W}^{n-1} , \hat{\mathbf{x}}) .
	\endaligned
	\end{equation}
\end{proof}

\subsection{Extension to Navier--Stokes equations}
We all know that the Navier--Stokes equations have only more nonlinear term $\mathbf{u} \cdot \nabla \mathbf{u}$ than the Stokes equation. The properties of other terms have been obtained in the above discussion. Next, we mainly focus on the nonlinear term. Firstly, we construct the following fully implicit RMAC scheme: We find $\{W^{x,n}_{i,j+1/2}\},~\{W^{y,n}_{i+1/2,j}\}$ and $\{Z^{n}_{i+1/2,j+1/2}\}$ for $n \geq 1$ such that, 
\begin{align}
&d_{t}{W}^{x,n}_{i,j+1/2}-\mu D_x(d_xW^{x,n})_{i,j+1/2}
-\mu d_y(D_yW^{x,n})_{i,j+1/2} + \frac{1}{2}[ W^{x,n}D_x(\mathcal{P}_h^xW^{x,n})+\mathcal{P}^x_hd_x((W^{x,n})^2) \notag\\
&~~+\mathcal{P}^y_h(\mathcal{P}^x_hW^{y,n}D_yW^{x,n}) + d_y(\mathcal{P}^y_hW^{x,n}\mathcal{P}^x_hW^{y,n})]_{i,j+1/2}+D_xZ_{i,j+1/2}^n=f_{i,j+1/2}^{x,n},\label{e7.}\\
&d_{t}{W}^{y,n}_{i+1/2,j}-\mu d_x(D_xW^{y,n})_{i+1/2,j}
-\mu D_y(d_yW^{y,n})_{i+1/2,j}+\frac{1}{2}[W^{y,n}D_y(\mathcal{P}^y_hW^{y,n}) + \mathcal{P}^y_hd_y((W^{y,n})^2)\notag\\
&~~+\mathcal{P}^x_h(\mathcal{P}^y_hW^{x,n}D_xW^{y,n}) + d_x(\mathcal{P}^y_hW^{x,n}\mathcal{P}^x_hW^{y,n}) ]_{i+1/2,j}+D_yZ_{i+1/2,j}^n=f_{i+1/2,j}^{y,n},\label{e8.}\\
&d_xW^{x,n}_{i+1/2,j+1/2}+d_yW^{y,n}_{i+1/2,j+1/2}=0,\label{e9.}
\end{align}
where $\mathcal{P}^x_h$ and $\mathcal{P}^y_h$ are linear interpolation operators in the $x$- and $y$-directions.
\begin{thm}\label{lem_energy*}
	Under the conditions of Theorem \ref{lem_energy}, the solutions of the RMAC scheme \eqref{e7.}-\eqref{e9.} can satisfy the discrete unconditional energy dissipation law
	\begin{equation}\label{e_energy*}
	\aligned
	&  \|\mathbf{W}^{n}\|^2 - \|\mathbf{W}^{n-1}\|^2 \leq - \Delta t \mu \|D\mathbf{W}^n\|^2 \leq 0,
	\endaligned
	\end{equation}
	and momentum conservation
	\begin{equation}\label{e_momentum*}
	\aligned
	( \mathbf{W}^{n} ,\mathbf{e}_i ) = ( \mathbf{W}^{n-1} ,  \mathbf{e}_i ), 
	\endaligned
	\end{equation}
	and  angular momentum conservation
	\begin{equation}\label{e_angular*}
	\aligned
	( \mathbf{W}^{n} ,\hat{\mathbf{x}} ) = ( \mathbf{W}^{n-1} , \hat{\mathbf{x}} ),
	\endaligned
	\end{equation}
	with the same $\mathbf{e}_i \ (i =1,2)$ and $\hat{\mathbf{x}}$ as in Theorem~\ref{lem_energy}.
\end{thm}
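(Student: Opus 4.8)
The plan is to split the scheme into its linear part --- which is identical to the Stokes RMAC scheme treated in Theorem~\ref{lem_energy} --- and the new discrete convective term, and to reduce the whole statement to three algebraic identities for that term. Write $\mathcal{N}^{x,n}_{i,j+1/2}$ and $\mathcal{N}^{y,n}_{i+1/2,j}$ for the bracketed convective expressions in \eqref{e7.} and \eqref{e8.}, with the prefactor $\tfrac12$ included. Since the time-difference, diffusion, pressure-gradient and incompressibility parts of \eqref{e7.}--\eqref{e9.} coincide term by term with those of \eqref{e7}--\eqref{e9} (the former merely write the Laplacian as composed difference operators), the same test-function computations that produced \eqref{e_energy1}--\eqref{e_energy2}, \eqref{e_momentum1} and \eqref{e_angular1} in the proof of Theorem~\ref{lem_energy} now produce those relations with an extra term $(\mathcal{N}^{x,n},\cdot)_{l^2,T,M}+(\mathcal{N}^{y,n},\cdot)_{l^2,M,T}$ added to the left-hand side. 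Hence it is enough to prove, for every $n$,
\[
(\mathcal{N}^{x,n},W^{x,n})_{l^2,T,M}+(\mathcal{N}^{y,n},W^{y,n})_{l^2,M,T}=0,
\]
\[
(\mathcal{N}^{x,n},1)_{l^2,T,M}=0,\qquad(\mathcal{N}^{y,n},1)_{l^2,M,T}=0,
\]
\[
(\mathcal{N}^{x,n},y_{j+1/2})_{l^2,T,M}-(\mathcal{N}^{y,n},x_{i+1/2})_{l^2,M,T}=0,
\]
since then \eqref{e_energy*}, \eqref{e_momentum*} and \eqref{e_angular*} follow by repeating, respectively, the manipulations \eqref{e_energy3}, \eqref{e_momentum1} and \eqref{e_angular1}.

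The crucial structural fact is that $\mathcal{N}$ is a staggered-grid discretization of the skew-symmetrized convective form $\mathbf{W}\cdot\nabla\mathbf{W}+\tfrac12(\nabla\cdot\mathbf{W})\mathbf{W}$, an EMAC-type construction in the spirit of \cite{charnyi2017conservation}: to leading order $W^{x,n}D_x(\mathcal{P}^x_hW^{x,n})+\mathcal{P}^x_hd_x((W^{x,n})^2)\approx\tfrac32\,W^x\partial_xW^x$ and $\mathcal{P}^y_h(\mathcal{P}^x_hW^{y,n}D_yW^{x,n})+d_y(\mathcal{P}^y_hW^{x,n}\mathcal{P}^x_hW^{y,n})\approx 2W^y\partial_yW^x+W^x\partial_yW^y$, whose sum, after the prefactor $\tfrac12$, equals $\mathbf{W}\cdot\nabla W^x+\tfrac12W^x(\nabla\cdot\mathbf{W})$. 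For the energy identity I would test the four pieces of $\mathcal{N}^{x,n}$ against $W^{x,n}$ and those of $\mathcal{N}^{y,n}$ against $W^{y,n}$, and then apply Lemma~\ref{le2}, the discrete product rules for $d_x,D_x,d_y,D_y$, and the discrete integration-by-parts rules for the averaging operators $\mathcal{P}^x_h,\mathcal{P}^y_h$ (all up to boundary terms). Every boundary contribution vanishes because $\mathbf{W}^n$ has zero boundary values inherited from \eqref{e6}, and the pieces combine so that only terms proportional to the discrete divergence $d_xW^{x,n}+d_yW^{y,n}$ remain; these vanish by \eqref{e9.}. This proves the first identity, and hence \eqref{e_energy*}.

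For momentum and angular momentum I would first rewrite, again using the discrete product rule, each of the four pieces of $\mathcal{N}^{x,n}$ in the form $D_x(\cdot)+d_y(\cdot)+(\text{bounded})\,(d_xW^{x,n}+d_yW^{y,n})$, a pure discrete divergence plus a multiple of the incompressibility residual, and similarly for $\mathcal{N}^{y,n}$. Tested against the constant $1$, the discrete divergences telescope to boundary sums which vanish under the compact-support hypothesis \eqref{e66}, and the residual term vanishes by \eqref{e9.}; this is the second identity, and \eqref{e_momentum*} follows exactly as \eqref{e_momentum1}. For the third identity I would test $\mathcal{N}^{x,n}$ against $y_{j+1/2}$ and $\mathcal{N}^{y,n}$ against $-x_{i+1/2}$: the pieces in pure $x$-divergence form telescope in $i$ against the $i$-independent weight $y_{j+1/2}$ and vanish by \eqref{e66} (likewise the $y$-divergence pieces of $\mathcal{N}^{y,n}$ against $x_{i+1/2}$); for the remaining pieces one applies Lemma~\ref{le2} once more, using that $D_y$ and $d_y$ applied to the coordinate grid function equal $1$ and that $\mathcal{P}^x_h,\mathcal{P}^y_h$ reproduce linear functions, so that a $y$-weighted $d_y(\cdot)$ turns into an unweighted sum; the shared mixed field $\mathcal{P}^y_hW^{x,n}\mathcal{P}^x_hW^{y,n}$ occurs as $d_y(\cdot)$ in \eqref{e7.} and as $d_x(\cdot)$ in \eqref{e8.}, so its $y_{j+1/2}$-weighted sum from the $x$-equation minus its $x_{i+1/2}$-weighted sum from the $y$-equation collapses to a single residual, again killed by \eqref{e66} and \eqref{e9.}. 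This gives the third identity, hence \eqref{e_angular*}. I expect this angular-momentum step to be the main obstacle: it is the only one of the three identities in which the coordinate weights interact with the interpolation operators on a genuinely non-uniform mesh, so one must track every discrete product-rule remainder and verify that each is exactly a multiple of $d_xW^{x,n}+d_yW^{y,n}$ --- a matter of careful bookkeeping rather than of a new idea, since the scheme \eqref{e7.}--\eqref{e9.} was built precisely so that these residuals align with the discrete divergence and are annihilated by \eqref{e9.}.
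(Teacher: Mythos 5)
Your proposal follows essentially the same route as the paper: the linear terms are treated exactly as in Theorem~\ref{lem_energy}, and the theorem is reduced to the three orthogonality identities $(\alpha^n,\mathbf{W}^n)=0$, $(\alpha^n,\mathbf{e}_i)=0$ and $(\alpha^n,\hat{\mathbf{x}})=0$ for the discrete convective term, each established by discrete summation by parts (Lemma~\ref{le2}), the compact-support boundary conditions \eqref{e66}, and the discrete divergence-free constraint \eqref{e9.}. One small correction to your closing remark: in the angular-momentum identity the surviving remainders are not multiples of $d_xW^{x,n}+d_yW^{y,n}$ but the cross terms $\mp(\mathcal{P}^x_hW^{x,n},\mathcal{P}^y_hW^{y,n})_{l^2,M}$, which cancel between the $x$- and $y$-direction contributions exactly as in \eqref{AA1}--\eqref{AA2} --- the mechanism you correctly anticipate one sentence earlier.
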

\begin{proof}
	We mainly focus on the nonlinear term and set
	\begin{equation}
	    \aligned
	    \alpha^{x,n}_{i,j+1/2} &= \frac{1}{2}[ W^{x,n}D_x(\mathcal{P}_h^xW^{x,n})+\mathcal{P}^x_hd_x((W^{x,n})^2) \\
	    &~~+ \mathcal{P}^y_h(\mathcal{P}^x_hW^{y,n}D_yW^{x,n}) + d_y(\mathcal{P}^y_hW^{x,n}\mathcal{P}^x_hW^{y,n})]_{i,j+1/2},\\
	    \alpha^{y,n}_{i+1/2,j} &= \frac{1}{2}[W^{y,n}D_y(\mathcal{P}^y_hW^{y,n}) + \mathcal{P}^y_hd_y((W^{y,n})^2)\\
	    &~~+\mathcal{P}^x_h(\mathcal{P}^y_hW^{x,n}D_xW^{y,n}) + d_x(\mathcal{P}^y_hW^{x,n}\mathcal{P}^x_hW^{y,n}) ]_{i+1/2,j}.
	    \endaligned
	\end{equation}
	By using the similar energy estimates in \cite{li2020sav}, we have 
	\begin{equation}
		(\alpha^n,\mathbf{W}^n) = 0,
	\end{equation}
	thus we can obtain (\ref{e_energy*}).
	
	For momentum conservation, using Lemma \ref{le2} and (\ref{e66}), we can obtain that
	\begin{equation}\label{M1}
		\aligned
		(W^{x,n}D_x(\mathcal{P}^x_hW^{x,n}),1)_{l^2,T,M} = (D_x(\mathcal{P}^x_hW^{x,n}),W^{x,n})_{l^2,T,M} = -(\mathcal{P}^x_hW^{x,n},d_xW^{x,n})_{l^2,M},
		\endaligned
	\end{equation}
	\begin{equation}\label{M2}
	\aligned
	(\mathcal{P}^x_hd_x((W^{x,n})^2),1)_{l^2,T,M} = (d_x((W^{x,n})^2),1)_{l^2,M} &= 0,\\
	(d_y(\mathcal{P}^y_hW^{x,n}\mathcal{P}^x_hW^{y,n}),1)_{l^2,T,M} &= 0,
	\endaligned
	\end{equation}
	\begin{equation}\label{M3}
		\aligned
		&(\mathcal{P}^y_h(\mathcal{P}^x_hW^{y,n}D_yW^{x,n}),1)_{l^2,T,M} = (\mathcal{P}^x_hW^{y,n}D_yW^{x,n},1)_{l^2,T} = (D_yW^{x,n},\mathcal{P}^x_hW^{y,n})_{l^2,T} \\
		&~~=-(W^{x,n},\mathcal{P}^x_hd_yW^{y,n})_{l^2,T,M} = -(\mathcal{P}^x_hW^{x,n},d_yW^{y,n})_{l^2,M}. 
		\endaligned
	\end{equation}
	Then combining (\ref{M1})-(\ref{M3}), we have that
	\begin{equation}
		(\alpha^{x,n},1)_{l^2,T,M} = -(\mathcal{P}^x_hW^{x,n},d_xW^{x,n}+d_yW^{y,n})_{l^2,M} = 0.
	\end{equation}
	Similarly in the $y$-direction, we get that
	\begin{equation}
		( \alpha^{n} ,\mathbf{e}_i ) = 0,
	\end{equation}
	thus we can obtain (\ref{e_momentum*}).
	
	For angular momentum conservation, using Lemma \ref{le2} and boundary condition (\ref{e66}), we have that
	\begin{align}\label{A1}
	(W^{x,n}D_x(\mathcal{P}^x_hW^{x,n}),y)_{l^2,T,M} & = -(\mathcal{P}^x_hW^{x,n},yd_xW^{x,n})_{l^2,M}=-(W^{x,n},y\mathcal{P}^x_hd_xW^{x,n})_{l^2,T,M},\\
	\label{A2}
	(\mathcal{P}^x_hd_x((W^{x,n})^2),y)_{l^2,T,M} & = (d_x((W^{x,n})^2),y)_{l^2,M} = 0,\\
	\nonumber
	(\mathcal{P}^y_h(\mathcal{P}^x_hW^{y,n}D_yW^{x,n}),y)_{l^2,T,M} & = (\mathcal{P}^x_hW^{y,n}D_yW^{x,n},\mathcal{P}_h^yy)_{l^2,T} = (D_yW^{x,n},y\mathcal{P}^x_hW^{y,n})_{l^2,T} \\
	\label{A3} =-(W^{x,n},d_y(y\mathcal{P}^x_hW^{y,n}))&_{l^2,T,M}  = -(\mathcal{P}^x_hW^{x,n},\mathcal{P}^y_hW^{y,n})_{l^2,M} - (W^{x,n},y\mathcal{P}^x_hd_yW^{y,n})_{l^2,T,M},\\
	\label{A4}
		(d_y(\mathcal{P}^y_hW^{x,n}\mathcal{P}^x_hW^{y,n}),y)_{l^2,T,M} &= -(\mathcal{P}^y_hW^{x,n}\mathcal{P}^x_hW^{y,n},1)_{l^2,T}=-(\mathcal{P}^x_hW^{x,n},\mathcal{P}^y_hW^{y,n})_{l^2,M}.
	\end{align}
	Then combining (\ref{A1})-(\ref{A4}), we have that
	\begin{equation}\label{AA1}
	(\alpha^{x,n},y)_{l^2,T,M} = -(\mathcal{P}^x_hW^{x,n},\mathcal{P}^y_hW^{y,n})_{l^2,M}.
	\end{equation}
    Similarly in the $y$-direction, we get that
    \begin{equation}\label{AA2}
    (\alpha^{y,n},-x)_{l^2,M,T} = (\mathcal{P}^x_hW^{x,n},\mathcal{P}^y_hW^{y,n})_{l^2,M}.
    \end{equation}
    Therefore we have
    \begin{equation}
    	( \alpha^{n} ,\hat{\mathbf{x}}) = 0,
    \end{equation}
    which leads to the desired result (\ref{e_angular*}).
\end{proof}

\begin{figure}[htbp]
	\centering
	\subfloat[]
	{\includegraphics[width=0.4\linewidth]{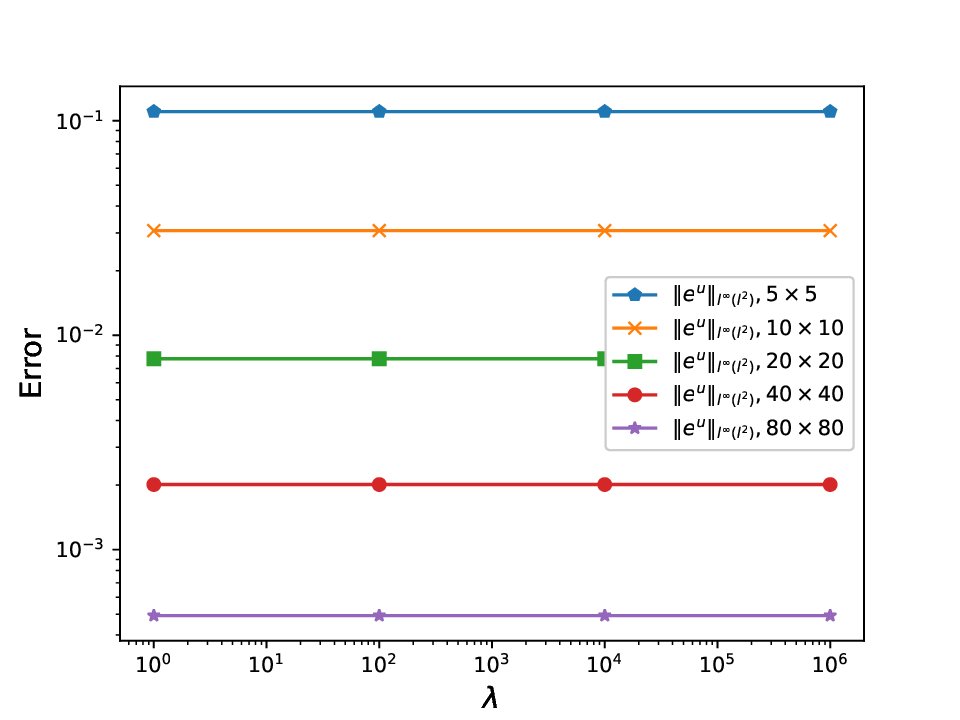}}
	\subfloat[]
	{\includegraphics[width=0.4\linewidth]{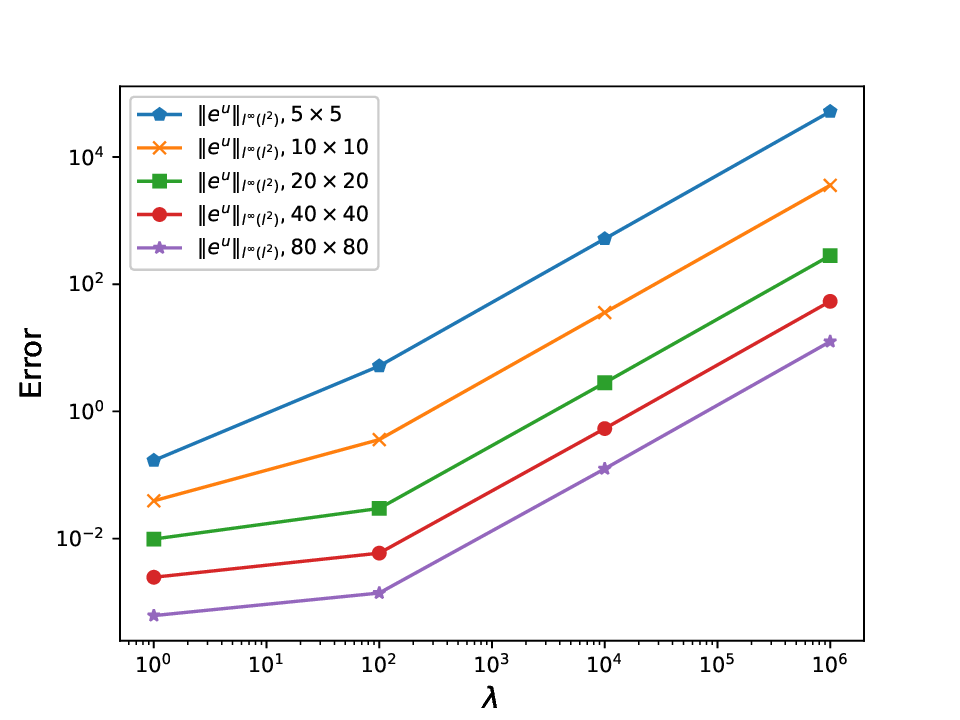}}\\
	\subfloat[]
	{\includegraphics[width=0.4\linewidth]{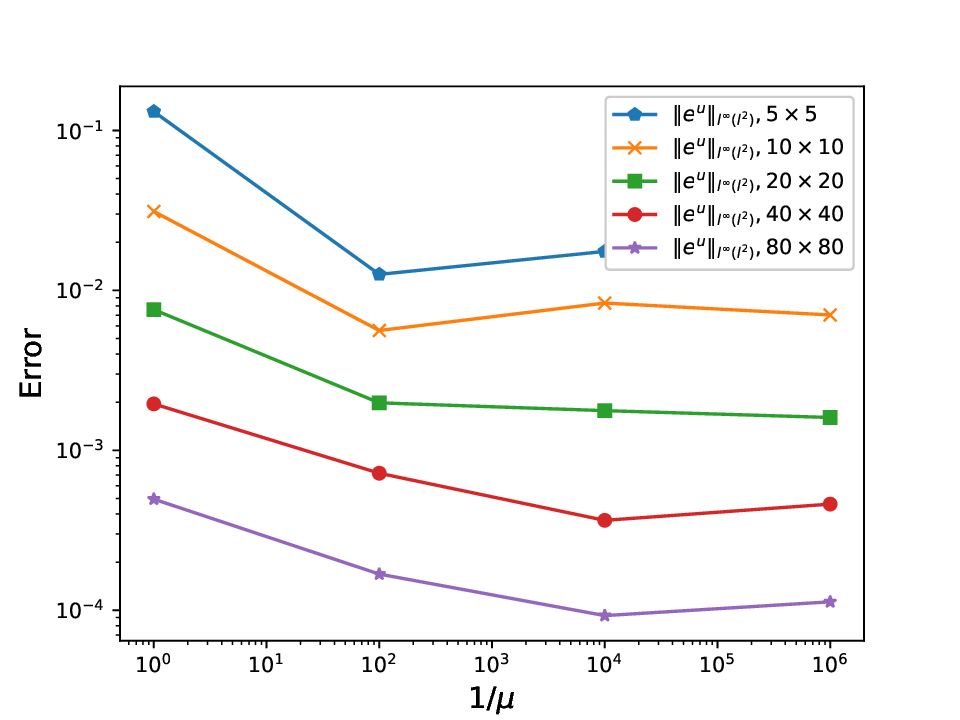}}
	\subfloat[]
	{\includegraphics[width=0.4\linewidth]{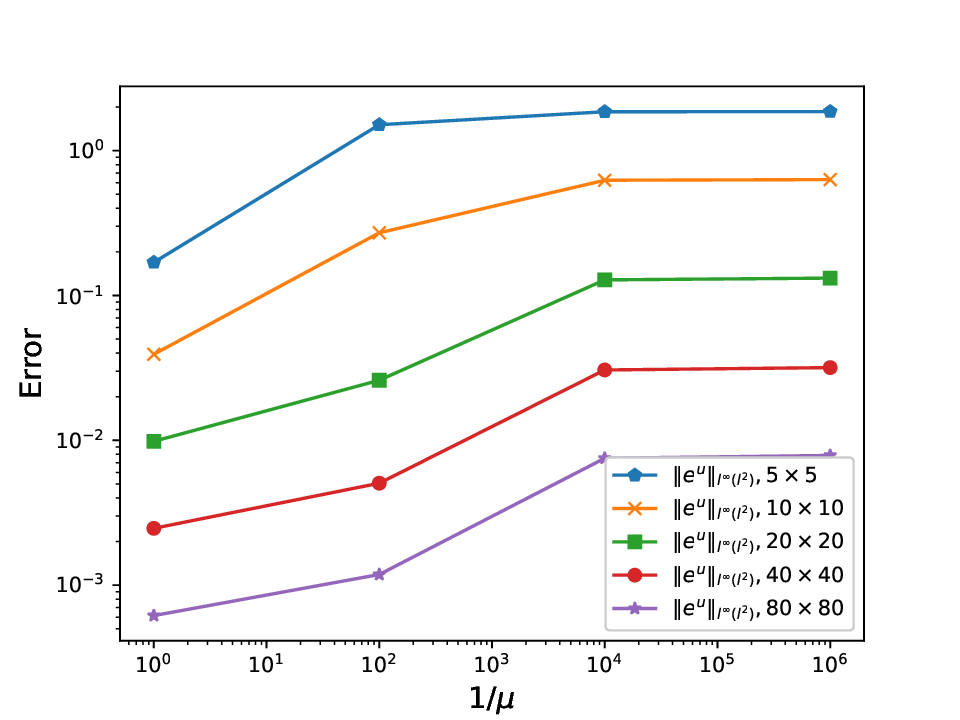}}
	\caption{Robust-error profiles for Stokes equations: RMAC (Left) vs. MAC (Right) schemes on non-uniform grids (Example 1)}
	\label{fig1}
\end{figure}
\begin{figure}[htbp]
	\centering
	\subfloat[]
	{\includegraphics[width=0.4\linewidth]{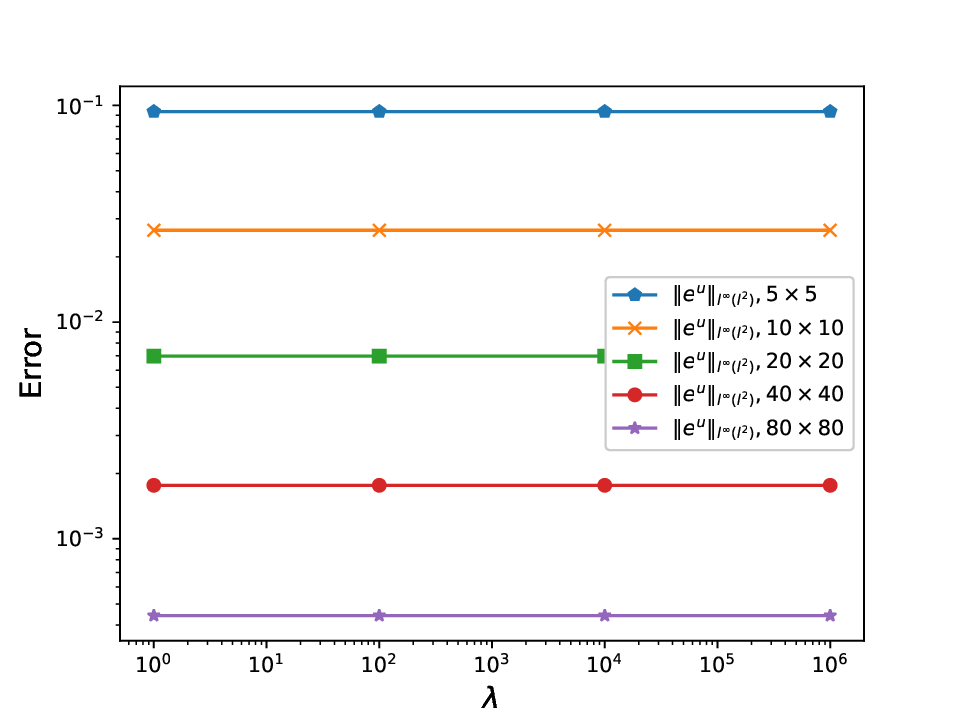}}
	\subfloat[]
	{\includegraphics[width=0.4\linewidth]{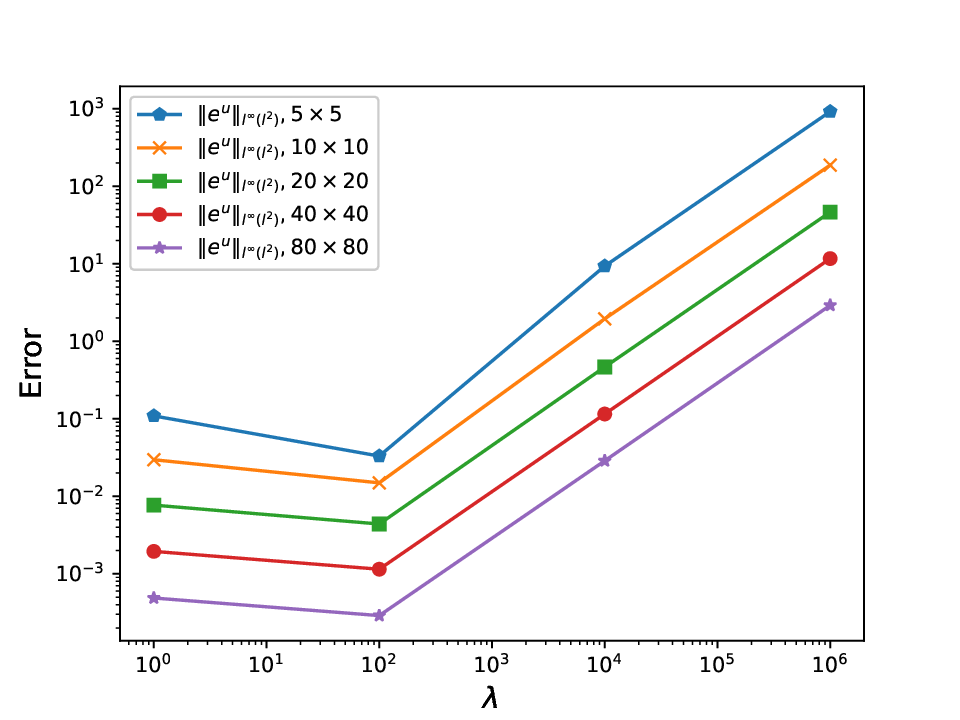}}\\
	\subfloat[]
	{\includegraphics[width=0.4\linewidth]{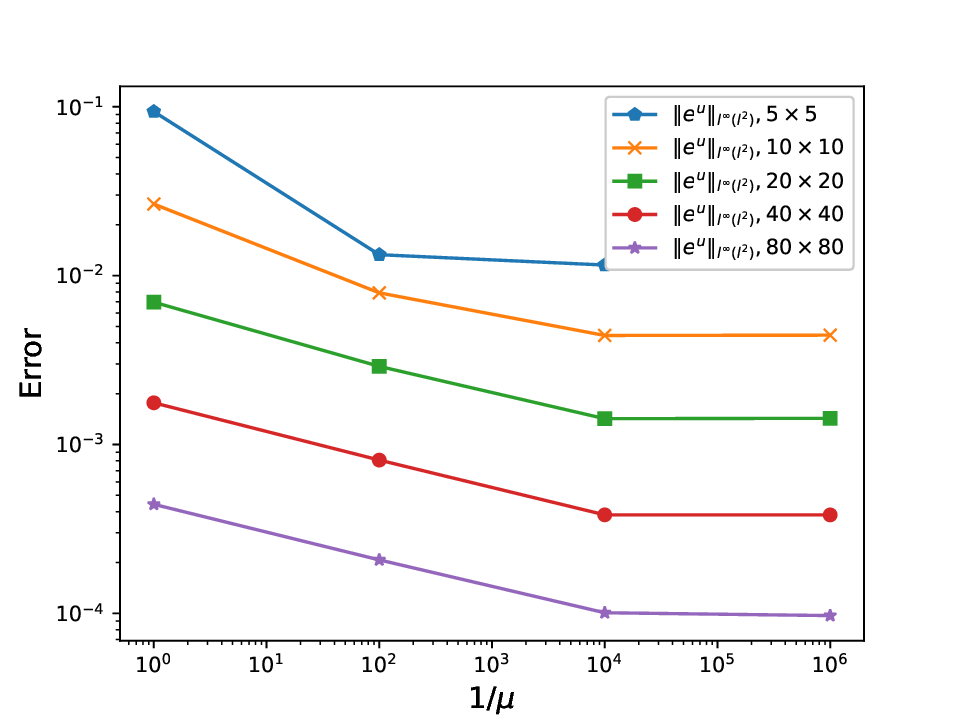}}
	\subfloat[]
	{\includegraphics[width=0.4\linewidth]{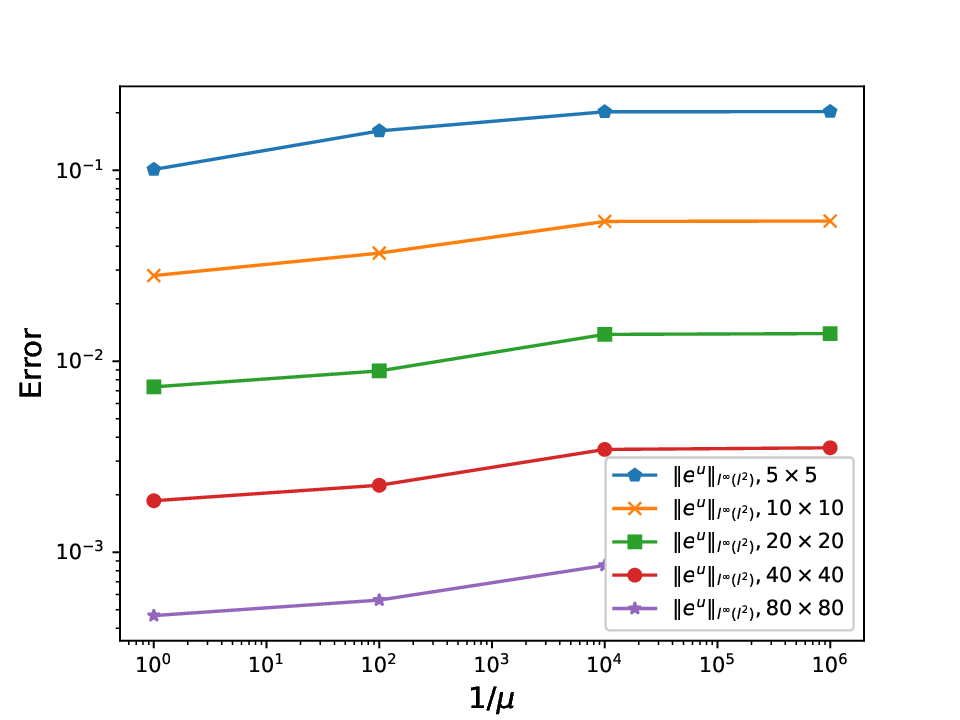}}
	\caption{Robust-error profiles for Navier-Stokes equations: RMAC (Left) vs. MAC (Right) schemes on uniform grids (Example 2)}
	\label{fig3}
\end{figure}
\begin{table}[htbp]
	\centering
	\caption{Error and convergence rates of RMAC for Example 1 on non-uniform grids}
	
	\label{table3}
	\small
	\begin{tabular}{|c|c|c|c|c|} \hline
		$N_x\times N_y$&$||e^p||_{l^\infty(l^2,M)}$ &$Rate$&$||e^u||_{l^{\infty}(l^2)}$&$Rate$  \\ \hline
		$5\times 5$ &2.87E-1  &---       &1.10E-1      &---           \\
		$10\times 10$ &8.83E-2 &1.92     &3.07E-2      &2.08         \\
		$20\times 20$ &2.11E-2  &2.06    &7.77E-3      &1.98           \\
		$40\times 40$ &6.01E-3  &1.82     &2.01E-3      &1.97          \\
		$80\times 80$ &1.43E-3  &2.09    &4.93E-4      
		&2.05          \\
		\hline
	\end{tabular}
\end{table}

\begin{table}[htbp]
	\centering
	\caption{Error and convergence rates of RMAC for Example 2 on uniform grids}
	
	\label{table4}
	\small
	\begin{tabular}{|c|c|c|c|c|} \hline
		$N_x\times N_y$&$||e^p||_{l^\infty(l^\infty)}$ &$Rate$&$||e^u||_{l^{\infty}(l^\infty)}$&$Rate$  \\ \hline
		$5\times 5$ &1.61E-1  &---       &9.36E-2      &---           \\
		$10\times 10$ &6.15E-2  &1.39     &2.65E-2      &1.82         \\
		$20\times 20$ &1.79E-2  &1.78    &6.96E-3     &1.93           \\
		$40\times 40$ &4.68E-3  &1.93     &1.77E-3      &1.98          \\
		$80\times 80$ &1.18E-3  &1.98    &4.42E-4      
		&2.00           \\
		\hline
	\end{tabular}
\end{table}

%

\section{Numerical Examples}
In this section, we present numerical experiments employing the constructed RMAC scheme and the classical MAC scheme  in \cite{Rui2016,li2018superconvergence} on both non-uniform and uniform spatial discretizations, with the computational domain fixed as the unit square $\Omega=(0,1)\times(0,1)$ for simplicity.

We test Example 1 for the Stokes equations and Example 2 for the Navier--Stokes equations by using the constructed RMAC scheme and the classical MAC scheme to verify the pressure-robustness and $\mu$-robustness for the velocity error. In addition we also give the corresponding errors for the pressure. The initial spatial partition is a $8 \times 8$ grid. The time step is refined as $\Delta t = 1/N^2_x = 1/N^2_y$ to show convergence rates both in time and space. The pressure-robustness is verified by adjusting $\lambda$, and the $\mu$-robustness is verified by reducing $\mu$. The value of $\lambda$ is taken as 1, $1e2$, $1e4$, $1e6$ with $\mu = 1$, and the value of $\mu$ is taken as 1, $1e-2$, $1e-4$, $1e-6$ with $\lambda = 1$.

\textbf{Example 1 for the Stokes equations}: The right hand side of the equation is computed according to the analytic solution given below.
\begin{equation*}
\aligned
\begin{cases}p(x,y)=\lambda e^tsin^3(4 \pi x)sin^3(4 \pi y),\\
u^x(x,y)=\pi e^t sin^2(\pi x)sin(2\pi y),\\
u^y(x,y)=-\pi e^t sin(2\pi x)sin^2(\pi y).
\end{cases}
\endaligned
\end{equation*}

The numerical results for the Stokes equation are listed in Fig. \ref{fig1} and Table \ref{table3} with non-uniform grids for the RMAC scheme and MAC scheme. The constructed non-uniform grids are characterized by ratios between the maximum and minimum mesh sizes exceeding 1.5 across the computational domain. 

As we can see, the errors of velocity for the constructed RMAC scheme maintain optimal convergence rates under steep pressure gradients, demonstrating its pressure-robust property, as numerically validated in Fig. \ref{fig1}(a). In contrast, the classical MAC scheme in \cite{Rui2016,li2018superconvergence} demonstrates evident velocity error degradation in Fig. \ref{fig1}(b), where the pressure induce error amplification under steep gradients, which is caused by its inherent lack of pressure-robustness. We also present the $\mu$-robustness of the error of velocity  in Fig. \ref{fig1}(c) and Fig. \ref{fig1}(d) on non-uniform grids for the RMAC scheme and MAC scheme, respectively. While both schemes demonstrate $\mu$-robustness in Figs. \ref{fig1}(c) and \ref{fig1}(d), the RMAC scheme exhibits that its velocity errors undergo monotonic reduction with increasing $\mu$ before attaining error stability, whereas the MAC scheme manifests progressive error amplification under $\mu$-ascending conditions until reaching equilibrium. In addition, we can see that all the convergence and superconvergence results for the RMAC scheme with $\lambda = 1,~\mu = 1$ on non-uniform grids are verified  in Table \ref{table3}. 


\textbf{Example 2 for the Navier-Stokes equations}: The right hand side of the equation is computed according to the analytic solution {\color{black}given below}.
\begin{equation*}
\aligned
\begin{cases}p(x,y,t)= 10\lambda e^t((x-0.5)^3y^2 + (1-x)^3(y-0.5)^3),\\
u^x(x,y,t)=-256tx^2(x-1)^2y(y-1)(2y-1),\\
u^y(x,y,t)=-u^x(y,x,t).
\end{cases}
\endaligned
\end{equation*}
The numerical results for the  Navier-Stokes equations  are listed in Fig. \ref{fig3} and Table \ref{table4} with uniform grids for the RMAC scheme and MAC scheme. We present the pressure-robustness of the error of velocity in Fig. \ref{fig3}(a) and Fig. \ref{fig3}(b) for the RMAC scheme and MAC scheme, respectively. And we demonstrate the $\mu$-robustness of the error of velocity  from Fig. \ref{fig3}(c) for RMAC scheme and Fig. \ref{fig3}(d) for MAC scheme with uniform grids, respectively. As we can see,  the performance on uniform grids is more stable than that on non-uniform grids. In addition, we can see that all the convergence results for the RMAC scheme with $\lambda = 1,~\mu = 1$ on uniform grids are verified  in Table \ref{table4}. 

\section{Conclusion}
In this work, we construct RMAC schemes  for solving time-dependent Stokes and Navier-Stokes equations on non-uniform grids, delivering three main contributions: i) a duality-based framework for the pressure estimation, which serves as a counterpart to the discrete LBB condition;  ii) pressure-robustness and $\mu$-robustness for the RMAC scheme, ensuring velocity errors remain independent of pressure approximations and bounded under $\mu \ll 1$; iii) rigorous second-order superconvergence non-uniform grids for the velocity and pressure for the Stokes equation. Furthermore, we also prove that the constructed RMAC schemes preserve local mass conservation, unconditional energy dissipation, momentum conservation and angular momentum conservation for the Stokes and Navier-Stokes equations. Numerical validations confirm theoretical convergence rates and robustness. These results can be extended to three dimensional problems.
\appendix
\section{Proof of consistency lemma}
In this appendix we derive that the auxiliary function statisfy the discrete equations to a high order of accuracy in Lemma \ref{lemcon}.
We first estimate the time difference quotient for $\widetilde{u}^{x,n}$ and $\widetilde{u}^{y,n}$.
\begin{lem}\label{le9}
	Under the condition of Lemma \ref{lemcon}, there holds
	\begin{equation}
	\aligned
	&d_{t}{\widetilde{u}}^{x,n}_{i,j+1/2}-\frac{1}{h_i}\int_{x_{i-1/2}}^{x_{i+1/2}}\frac{\partial u^{x,n}}{\partial t}(x,y_{j+1/2})dx \\
	&~~~~~~~= -\frac{\gamma^{x,n}_{i+1/2,j+1/2} - \gamma^{x,n}_{i-1/2,j+1/2}}{h_i} + O(\|\mathbf{u}\|_{W^{2,\infty}(J;W^{2,\infty}(\Omega))}(h^2+k^2 + \Delta t)), \\
	&d_{t}{\widetilde{u}}^{y,n}_{i+1/2,j}-\frac{1}{k_j}\int_{y_{j-1/2}}^{y_{j+1/2}}\frac{\partial u^{y,n}}{\partial t}(x_{i+1/2},y)dy \\
	&~~~~~~~= -\frac{\gamma^{y,n}_{i+1/2,j+1/2} - \gamma^{y,n}_{i+1/2,j-1/2}}{k_j} + O(\|\mathbf{u}\|_{W^{2,\infty}(J;W^{2,\infty}(\Omega))}(h^2 + k^2 + \Delta t)),
	\endaligned
	\end{equation}
	where $ \displaystyle \gamma^{x,n}_{i+1/2,j+1/2} = \frac{h^2_{i+1/2}}{8}\frac{\partial^2 u^{x,n}_{i+1/2,j+1/2}}{\partial t \partial x},~\gamma^{y,n}_{i+1/2,j+1/2} = \frac{k^2_{j+1/2}}{8}\frac{\partial^2 u^{y,n}_{i+1/2,j+1/2}}{\partial t \partial y}$.
\end{lem}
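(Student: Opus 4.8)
The plan is to reduce the statement to a purely spatial estimate and then carry out careful Taylor expansions, paying special attention to the non-uniform mesh. Write $v:=\partial_t u^x$; the $y$-component is handled symmetrically. Since the interpolation \eqref{e25} is linear in its argument, the time-difference quotient passes through it, so
\begin{equation*}
d_t\widetilde u^{x,n}_{i,j+1/2}=\frac{1}{k_{j+1/2}}\int_{y_j}^{y_{j+1}}\frac{u^{x,n}(x_i,y)-u^{x,n-1}(x_i,y)}{\Delta t}\,\diff y-\frac{k_{j+1/2}^2}{6}\,\partial_{yy}\!\left(\frac{u^{x,n}_{i,j+1/2}-u^{x,n-1}_{i,j+1/2}}{\Delta t}\right).
\end{equation*}
A first-order Taylor expansion in time (using $\partial_{tt}\mathbf u\in L^\infty$) replaces each difference quotient by the corresponding value of $v$ at $t^n$ up to $O(\Delta t)$; in the corrector the $O(\Delta t)$ remainder is multiplied by $k_{j+1/2}^2=O(k^2)$ and is absorbed. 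Hence $d_t\widetilde u^{x,n}_{i,j+1/2}=\widetilde v^{\,n}_{i,j+1/2}+O(\Delta t)$, where $\widetilde v$ denotes \eqref{e25} applied to $v$, and it suffices to prove, for the time-frozen smooth function $v$,
\begin{equation*}
\widetilde v^{\,n}_{i,j+1/2}-\frac{1}{h_i}\int_{x_{i-1/2}}^{x_{i+1/2}}v^n(x,y_{j+1/2})\,\diff x=-\frac{\gamma^{x,n}_{i+1/2,j+1/2}-\gamma^{x,n}_{i-1/2,j+1/2}}{h_i}+O(h^2+k^2).
\end{equation*}

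For this spatial estimate I would first dispose of the vertical direction: the corrector $-\tfrac{k_{j+1/2}^2}{6}\partial_{yy}v^n(x_i,y_{j+1/2})$ is itself $O(k^2)$, and the midpoint rule (note $y_{j+1/2}$ is the midpoint of $[y_j,y_{j+1}]$) gives $\tfrac{1}{k_{j+1/2}}\int_{y_j}^{y_{j+1}}v^n(x_i,y)\,\diff y=v^n(x_i,y_{j+1/2})+O(k^2)$. On the horizontal side the key observation is that on a non-uniform grid $x_i$ is \emph{not} the midpoint of $[x_{i-1/2},x_{i+1/2}]$; that midpoint is $x_i+\tfrac14(h_{i+1/2}-h_{i-1/2})$. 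Applying the midpoint rule on $[x_{i-1/2},x_{i+1/2}]$ (whose length is exactly $h_i$) and then Taylor-expanding about $x_i$ to first order yields
\begin{equation*}
\frac{1}{h_i}\int_{x_{i-1/2}}^{x_{i+1/2}}v^n(x,y_{j+1/2})\,\diff x=v^n(x_i,y_{j+1/2})+\frac{h_{i+1/2}-h_{i-1/2}}{4}\,\partial_x v^n(x_i,y_{j+1/2})+O(h^2),
\end{equation*}
so that, after subtracting, the leading discrepancy is precisely $-\tfrac14(h_{i+1/2}-h_{i-1/2})\,\partial_x v^n(x_i,y_{j+1/2})$.

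The final step is to recognise this leading term as the difference quotient of $\gamma^{x,n}$. Expanding $\partial_x v^n$ at $x_{i\pm1/2}$ about $x_i$ and using $h_i=\tfrac12(h_{i+1/2}+h_{i-1/2})$,
\begin{equation*}
\frac{\gamma^{x,n}_{i+1/2,j+1/2}-\gamma^{x,n}_{i-1/2,j+1/2}}{h_i}=\frac{h_{i+1/2}^2\,\partial_x v^n(x_{i+1/2},\cdot)-h_{i-1/2}^2\,\partial_x v^n(x_{i-1/2},\cdot)}{8h_i}=\frac{h_{i+1/2}-h_{i-1/2}}{4}\,\partial_x v^n(x_i,y_{j+1/2})+O(h^2),
\end{equation*}
because $\tfrac{h_{i+1/2}^2-h_{i-1/2}^2}{h_{i+1/2}+h_{i-1/2}}=h_{i+1/2}-h_{i-1/2}$ while the $\partial_{xx}v$-contribution carries the extra factor $\tfrac{h_{i+1/2}^3+h_{i-1/2}^3}{h_{i+1/2}+h_{i-1/2}}=O(h^2)$. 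Combining the three reductions, then repeating the argument verbatim in the $y$-direction, proves the lemma; the regularity $\|\mathbf u\|_{W^{2,\infty}(J;W^{2,\infty}(\Omega))}$ is exactly what the worst terms demand — the time Taylor remainder needs $\partial_{tt}\mathbf u$, and the spatial expansions need the second derivatives $\partial_{xx}v,\partial_{yy}v$ of $v=\partial_t\mathbf u$. The one genuinely delicate point, which I would write out in full, is the non-uniform bookkeeping in the last display: one must retain the $O(h)$ offset of $x_i$ from the cell midpoint and verify that it is cancelled \emph{exactly} by the telescoping $\gamma$-term, leaving a remainder that is truly $O(h^2)$ rather than merely $O(h)$.
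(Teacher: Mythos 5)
Your proposal is correct and follows essentially the same route as the paper's proof: both reduce the claim to a spatial Taylor expansion of the cell average of $\partial_t u^x$ about $x_i$ (after absorbing the $O(k^2+\Delta t)$ contributions from the $y$-interpolation and the time difference quotient), and both identify the leading $O(h)$ discrepancy caused by $x_i$ not being the midpoint of $[x_{i-1/2},x_{i+1/2}]$ with the telescoped $\gamma^{x,n}$ term. The paper obtains the $\gamma$-quotient directly by splitting the integral into the two half-cells and expanding each about $x_i$, whereas you go through the cell-midpoint offset $\tfrac14(h_{i+1/2}-h_{i-1/2})$ and then match it to the $\gamma$-quotient separately; this is the same computation with slightly different bookkeeping.
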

\begin{proof}
	By Talor's expansion, we obtain that
	\begin{equation}\label{5.7}
	\aligned
	d_{t}{\widetilde{u}}^{x,n}_{i,j+1/2} = \frac{\partial u^{x,n}_{i,j+1/2}}{\partial t} + O(\|\mathbf{u}\|_{W^{2,\infty}(J;W^{2,\infty}(\Omega))} (k^2 + \Delta t)).
	\endaligned
	\end{equation}
	In addtion, we have 
	\begin{equation}\label{5.8}
	\aligned
	&\frac{1}{h_i}\int_{x_{i-1/2}}^{x_{i+1/2}}\frac{\partial u^{x,n}}{\partial t}(x,y_{j+1/2})dx  \\
	&= \frac{\partial u^{x,n}_{i,j+1/2}}{\partial t}+ \frac{1}{8h_i}(\frac{\partial^2 u^{x,n}_{i+1/2,j+1/2}}{\partial t\partial x}h^2_{i+1/2} - \frac{\partial^2 u^{x,n}_{i-1/2,j+1/2}}{\partial t\partial x}h^2_{i-1/2}) \\
	&~~~~+ O(\|\mathbf{u}\|_{W^{1,\infty}(J;W^{2,\infty}(\Omega))}h^2).
	\endaligned
	\end{equation}
	Combining (\ref{5.7}) and (\ref{5.8}) we obtain that
	\begin{equation}
	\aligned
	&d_{t}{\widetilde{u}}^{x,n}_{i,j+1/2}-\frac{1}{h_i}\int_{x_{i-1/2}}^{x_{i+1/2}}\frac{\partial u^{x,n}}{\partial t}(x,y_{j+1/2})dx \\
	&~~~~~~~= -\frac{\gamma^{x,n}_{i+1/2,j+1/2} - \gamma^{x,n}_{i-1/2,j+1/2}}{h_i} + O(\|\mathbf{u}\|_{W^{2,\infty}(J;W^{2,\infty}(\Omega))}(h^2+k^2 + \Delta t)).
	\endaligned
	\end{equation}
	Similarly, we can have the same conclusion for $y$-direction.
\end{proof}

Now we estimate $\frac{d_x\widetilde{u}^{x,n}_{i+1/2,j+1/2} - d_x\widetilde{u}^{x,n}_{i-1/2,j+1/2}}{h_i}$
and $\frac{d_y\widetilde{u}^{y,n}_{i+1/2,j+1/2} - d_y\widetilde{u}^{y,n}_{i+1/2,j-1/2}}{k_j}$. For $n \geq 1$ set
\begin{equation}\label{e38}
\left\{
\begin{array}{l}
\displaystyle
\epsilon^{x,x,n}_{i+1/2,j+1/2}=\frac{h^2_{i+1/2}}{24}\frac{\partial^3 u^{x,n}_{i+1/2,j+1/2}}{\partial x^3}-
\frac{k^2_{j+1/2}}{8}\frac{\partial^3 u^{x,n}_{i+1/2,j+1/2}}{\partial x \partial y^2},\\
\displaystyle
\epsilon^{y,y,n}_{i+1/2,j+1/2}=-\frac{h^2_{i+1/2}}{8}\frac{\partial^3 u^{y,n}_{i+1/2,j+1/2}}{\partial x^2 \partial y}+
\frac{k^2_{j+1/2}}{24}\frac{\partial^3 u^{y,n}_{i+1/2,j+1/2}}{\partial y^3}.\\
\end{array}
\right.
\end{equation}

\begin{lem}\label{le7}
	Under the condition of Lemma \ref{lemcon}, there holds
	\begin{equation}\label{e139}
	\aligned
	&\frac{d_x\widetilde{u}^{x,n}_{i+1/2,j+1/2} - d_x\widetilde{u}^{x,n}_{i-1/2,j+1/2}}{h_i} - \frac{1}{h_i}\int_{x_{i - 1/2}}^{x_{i + 1/2}}\frac{\partial^2 u^{x,n}}{\partial x^2}(x,y_{j + 1/2})dx \\
	&~~~~~~~~= \frac{\epsilon^{x,x,n}_{i+1/2,j+1/2} - \epsilon^{x,x,n}_{i-1/2,j+1/2}}{h_i} 
	+ O(\|\mathbf{u}\|_{L^{\infty}(J;W^{4,\infty}(\Omega))}(h^2 + k^2)),\\
	&\frac{d_y\widetilde{u}^{y,n}_{i+1/2,j+1/2} - d_y\widetilde{u}^{y,n}_{i+1/2,j-1/2}}{k_j} - \frac{1}{k_j} \int_{y_{j-1/2}}^{y_{j+1/2}}\frac{\partial^2 u^{y,n}}{\partial y^2}(x_{i+1/2},y) dy \\
	&~~~~~~~~= \frac{\epsilon^{y,y,n}_{i+1/2,j+1/2} - \epsilon^{y,y,n}_{i+1/2,j-1/2}}{k_j} + O(\|\mathbf{u}\|_{L^{\infty}(J;W^{4,\infty}(\Omega))}(h^2 + k^2)).
	\endaligned
	\end{equation}
\end{lem}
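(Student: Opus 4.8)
The plan is to reduce both identities in \eqref{e139} to one- and two-dimensional midpoint-quadrature estimates, carefully retaining the structured terms that must not be discarded. First I would rewrite the auxiliary function as a $y$-cell average minus a curvature correction: with $\hat u^{x,n}_{i,j+1/2}:=\frac{1}{k_{j+1/2}}\int_{y_j}^{y_{j+1}}u^{x,n}(x_i,y)\,dy$ one has $\widetilde u^{x,n}_{i,j+1/2}=\hat u^{x,n}_{i,j+1/2}-\frac{k_{j+1/2}^2}{6}\partial_{yy}u^{x,n}_{i,j+1/2}$, so that applying the fundamental theorem of calculus in $x$ to each summand yields
\[
 d_x\widetilde u^{x,n}_{i+1/2,j+1/2}=A_{i+1/2,j+1/2}-\frac{k_{j+1/2}^2}{6}\,B_{i+1/2,j+1/2},
\]
where $A_{i+1/2,j+1/2}:=\frac{1}{h_{i+1/2}k_{j+1/2}}\int_{\Omega_{i+1/2,j+1/2}}\partial_x u^{x,n}\,dx\,dy$ and $B_{i+1/2,j+1/2}:=\frac{1}{h_{i+1/2}}\int_{x_i}^{x_{i+1}}\partial_x\partial_{yy}u^{x,n}(x,y_{j+1/2})\,dx$, and similarly at $i-1/2$.

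Next I would Taylor-expand these cell averages with integral remainder about the cell centres. Using only $\mathbf{u}\in W^{4,\infty}$, the two-dimensional midpoint rule over the rectangle centred at $(x_{i+1/2},y_{j+1/2})$ gives, the odd-order moments vanishing by symmetry,
\[
 A_{i+1/2,j+1/2}=\partial_x u^{x,n}_{i+1/2,j+1/2}+\frac{h_{i+1/2}^2}{24}\partial_x^3 u^{x,n}_{i+1/2,j+1/2}+\frac{k_{j+1/2}^2}{24}\partial_x\partial_{yy}u^{x,n}_{i+1/2,j+1/2}+O\!\big(\|\mathbf{u}\|_{W^{4,\infty}}(h^3+k^3)\big),
\]
while the one-dimensional midpoint rule gives $B_{i+1/2,j+1/2}=\partial_x\partial_{yy}u^{x,n}_{i+1/2,j+1/2}+O(\|\mathbf{u}\|_{W^{4,\infty}}h)$. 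Then I would form $\frac{1}{h_i}\big(d_x\widetilde u^{x,n}_{i+1/2,j+1/2}-d_x\widetilde u^{x,n}_{i-1/2,j+1/2}\big)$ by subtraction and observe three things: (i) the leading terms telescope exactly, $\frac{1}{h_i}\big(\partial_x u^{x,n}_{i+1/2,j+1/2}-\partial_x u^{x,n}_{i-1/2,j+1/2}\big)=\frac{1}{h_i}\int_{x_{i-1/2}}^{x_{i+1/2}}\partial_x^2 u^{x,n}(x,y_{j+1/2})\,dx$, which is precisely the integral subtracted in \eqref{e139}; (ii) since $k_{j+1/2}$ is independent of $i$, the $\frac{k^2}{24}$ piece of $A$ and the $-\frac{k^2}{6}B$ piece combine with coefficient $\frac1{24}-\frac16=-\frac18$, so that together with the $\frac{h^2}{24}\partial_x^3 u$ piece of $A$ they reassemble into exactly $\frac{1}{h_i}\big(\epsilon^{x,x,n}_{i+1/2,j+1/2}-\epsilon^{x,x,n}_{i-1/2,j+1/2}\big)$ with $\epsilon^{x,x,n}$ as defined in \eqref{e38}; (iii) the leftover — the $O(h^3+k^3)$ remainder of $A$ divided by $h_i$ and the $O(h)$ remainder of $B$ multiplied by $k^2/6$ and divided by $h_i$ — is $O(\|\mathbf{u}\|_{W^{4,\infty}}(h^2+k^2))$, where the quasi-uniformity \eqref{e2} is used to turn stray factors such as $k^3/h$ into $O(k^2)$. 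The second identity in \eqref{e139} follows by interchanging the roles of $x$ and $y$ throughout.

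The main obstacle is the order-bookkeeping rather than any single difficult estimate: one must carry the Taylor expansions in integral-remainder form just far enough to expose the $\frac{h^2}{24}$ and $\frac{k^2}{24}$ coefficients, and then verify that the divided difference of the remainders really is $O(h^2+k^2)$ and not merely $O(h+k)$. The delicate point — and the reason the lemma keeps $\epsilon^{x,x,n}$ as an explicit term on the right rather than absorbing it — is that $\epsilon^{x,x,n}$ itself produces only an $O(h)$ contribution after the divided difference on a non-uniform grid, since $h_{i+1/2}^2-h_{i-1/2}^2=O(h\,h_i)$; it therefore cannot go into the $O(h^2+k^2)$ remainder, and correctly separating the algebraic combination that rebuilds $\epsilon^{x,x,n}$ from the genuinely negligible part is the crux (this is also exactly the non-uniform-grid feature that later forces $\epsilon$-terms to be moved onto the test function via Lemma \ref{le2}).
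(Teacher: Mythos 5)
Your proposal is correct and follows essentially the same route as the paper: Taylor expansion of $d_x\widetilde{u}^{x,n}_{i\pm 1/2,j+1/2}$ about the cell centres, the combination $\tfrac{1}{24}-\tfrac{1}{6}=-\tfrac18$ reassembling $\epsilon^{x,x,n}$ exactly as defined in \eqref{e38}, exact telescoping of the leading term into the integral of $\partial_x^2 u^{x,n}$, and quasi-uniformity \eqref{e2} to control the divided remainders. Your packaging via the fundamental theorem of calculus and midpoint-quadrature estimates is only a cosmetic variant of the paper's direct expansion, and your closing remark on why $\epsilon^{x,x,n}$ cannot be absorbed into the $O(h^2+k^2)$ remainder on non-uniform grids correctly identifies the point of keeping it explicit.
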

\begin{proof}
	From (\ref{e25}) we have that
	\begin{equation}
	\aligned
	d_x &\widetilde{u}^{x,n}_{i+1/2,j+1/2} \\
	&= \frac{\partial u^x_{i+1/2,j+1/2}}{\partial x} + \frac{h^2_{i+1/2}}{24}\frac{\partial^3 u^{x,n}_{i+1/2,j+1/2}}{\partial x^3} + \frac{k^2_{j+1/2}}{24}\frac{\partial^3 u^{x,n}_{i+1/2,j+1/2}}{\partial x \partial y^2}\\
	&~~~~~~-
	\frac{k^2_{j+1/2}}{6}\frac{\partial^3 u^{x,n}_{i+1/2,j+1/2}}{\partial x \partial y^2}+ O(\|\mathbf{u}\|_{L^\infty(J;W^{4,\infty}(\Omega))}(h^3 + k^2h + k^3))\\
	&=\frac{\partial u^x_{i+1/2,j+1/2}}{\partial x} + \frac{h^2_{i+1/2}}{24}\frac{\partial^3 u^{x,n}_{i+1/2,j+1/2}}{\partial x^3} -
	\frac{k^2_{j+1/2}}{8}\frac{\partial^3 u^{x,n}_{i+1/2,j+1/2}}{\partial x \partial y^2}\\
	&~~~~~~+ O(\|\mathbf{u}\|_{L^\infty(J;W^{4,\infty}(\Omega))}(h^3 + k^2h + k^3)),
	\endaligned
	\end{equation}
	thus we obtain that
	\begin{equation}
	\aligned
	&\frac{d_x\widetilde{u}^{x,n}_{i+1/2,j+1/2} - d_x\widetilde{u}^{x,n}_{i-1/2,j+1/2}}{h_i} - \frac{1}{h_i}\int_{x_{i - 1/2}}^{x_{i + 1/2}}\frac{\partial^2 u^{x,n}}{\partial x^2}(x,y_{j + 1/2})dx \\
	&~~~~~~~~= \frac{\epsilon^{x,x,n}_{i+1/2,j+1/2} - \epsilon^{x,x,n}_{i-1/2,j+1/2}}{h_i} 
	+ O(\|\mathbf{u}\|_{L^{\infty}(J;W^{4,\infty}(\Omega))}(h^2 + k^2)).
	\endaligned
	\end{equation}
	Similarly, we can have the same conclusion for $y$-direction.
\end{proof}

Next we estimate $\frac{D_y\widetilde{u}^{x,n}_{i,j+1} - D_y\widetilde{u}^{x,n}_{i,j}}{k_{j + 1/2}}$
and $\frac{D_x\widetilde{u}^{y,n}_{i+1,j} - D_x\widetilde{u}^{y,n}_{i,j}}{h_{i + 1/2}}$.
Set for $n\geq1$,
\begin{equation}\label{e111}
\left\{
\begin{array}{l}
\displaystyle\epsilon^{x,y,n}_{i,j+1}=-\frac{k^3_{j+3/2}+k^3_{j+1/2}}{24k_{j+1}}\frac{\partial^3 u^{x,n}_{i,j+1}}{\partial y^3},~~~1\leq i\leq N_x-1,~1\leq j\leq N_y-1,\\
\displaystyle\epsilon^{x,y,n}_{i,0}=-\frac{k^2_{1/2}}{12}\frac{\partial^3 u^{x,n}_{i,0}}{\partial y^3},~
\epsilon^{x,y,n}_{i,N_y}=-\frac{k^2_{N_y-1/2}}{12}\frac{\partial^3 u^{x,n}_{i,N_y}}{\partial y^3},
~~~1\leq i\leq N_x-1,\\
\displaystyle\epsilon^{y,x,n}_{i+1,j}=-\frac{h^3_{i+3/2}+h^3_{i+1/2}}{24h_{i+1}}\frac{\partial^3 u^{y,n}_{i+1,j}}{\partial x^3},~~~1\leq i\leq N_x-1,~1\leq j\leq N_y-1,\\
\displaystyle\epsilon^{y,x,n}_{0,j}=-\frac{h^2_{1/2}}{12}\frac{\partial^3 u^{y,n}_{0,j}}{\partial x^3},~
\epsilon^{y,x,n}_{N_x,j}=-\frac{h^2_{N_x-1/2}}{12}\frac{\partial^3 u^{y,n}_{N_x,j}}{\partial x^3},
~~~1\leq j\leq N_y-1.\\
\end{array}
\right.
\end{equation}
\begin{lem}\label{le8}
	Under the condition of Lemma \ref{lemcon}, then there holds
	\begin{equation}\label{e112}
	\aligned
	&\frac{D_y\widetilde{u}^{x,n}_{i,j+1} - D_y\widetilde{u}^{x,n}_{i,j}}{k_{j+1/2}} - \frac{1}{h_i}\int_{x_{i-1/2}}^{x_{i+1/2}}\frac{\partial^2 u^{x,n}}{\partial y^2}(x,y_{j+1/2})dx \\ 
	&~~~~~~~~= -\frac{\delta^{x,n}_{i+1/2,j+1/2} - \delta^{x,n}_{i-1/2,j+1/2}}{h_i} + \frac{\epsilon^{x,y,n}_{i,j+1} - \epsilon^{x,y,n}_{i,j}}{k_{j + 1/2}} + O(\|\mathbf{u}\|_{L^{\infty}(J;W^{4,\infty})}(h^2+k^2)),\\
	&\frac{D_x\widetilde{u}^{y,n}_{i+1,j} - D_x\widetilde{u}^{y,n}_{i,j}}{h_{i+1/2}} - \frac{1}{k_j}\int_{y_{j-1/2}}^{y_{j+1/2}}\frac{\partial^2 u^{y,n}}{\partial x^2}(x_{i+1/2},y)dy \\
	&~~~~~~~~= -\frac{\delta^{y,n}_{i+1/2,j+1/2} - \delta^{y,n}_{i+1/2,j-1/2}}{k_j} + \frac{\epsilon^{y,x,n}_{i+1,j} - \epsilon^{y,x,n}_{i,j}}{h_{i+1/2}}+O(\|\mathbf{u}\|_{L^{\infty}(J;W^{4,\infty})}(h^2+k^2)),
	\endaligned
	\end{equation}
	where $ \displaystyle \delta_{i+1/2,j+1/2}^{x,n} = \frac{h^2_{i+1/2}}{8}\frac{\partial^3 u^{x,n}_{i+1/2,j+1/2}}{\partial y^2 \partial x}, ~\delta_{i+1/2,j+1/2}^{y,n} = \frac{k^2_{j+1/2}}{8}\frac{\partial^3 u^{y,n}_{i+1/2,j+1/2}}{\partial x^2 \partial y}$.
\end{lem}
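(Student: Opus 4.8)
The plan is to establish the two identities in \eqref{e112} by splitting each discrepancy into a one-dimensional part in the direction of the difference operator and a part produced by the finite-volume average over the transverse control cell, and then Taylor-expanding the two parts separately; the scheme parallels Lemmas~\ref{le9} and~\ref{le7}, but the cross term now produces \emph{both} the $\epsilon^{x,y,n}$ and the $\delta^{x,n}$ contributions. It suffices to treat the $x$-component, the other one following after interchanging $x\leftrightarrow y$ and $u^x\leftrightarrow u^y$.

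First I would write
\begin{equation*}
\frac{D_y\widetilde{u}^{x,n}_{i,j+1}-D_y\widetilde{u}^{x,n}_{i,j}}{k_{j+1/2}}-\frac{1}{h_i}\int_{x_{i-1/2}}^{x_{i+1/2}}\frac{\partial^2 u^{x,n}}{\partial y^2}(x,y_{j+1/2})\,\diff x = A+B,
\end{equation*}
where $A=\frac{D_y\widetilde{u}^{x,n}_{i,j+1}-D_y\widetilde{u}^{x,n}_{i,j}}{k_{j+1/2}}-\partial^2_y u^{x,n}(x_i,y_{j+1/2})$ and $B=\partial^2_y u^{x,n}(x_i,y_{j+1/2})-\frac{1}{h_i}\int_{x_{i-1/2}}^{x_{i+1/2}}\partial^2_y u^{x,n}(x,y_{j+1/2})\,\diff x$. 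The term $B$ involves only the smooth function $f(x):=\partial^2_y u^{x,n}(x,y_{j+1/2})$: since $x_i$ is offset from the midpoint of $[x_{i-1/2},x_{i+1/2}]$, a Taylor expansion of $f$ about $x_i$ gives a leading term $f'(x_i)(h_{i+1/2}^2-h_{i-1/2}^2)/(8h_i)$, and using $h_{i+1/2}^2-h_{i-1/2}^2=2h_i(h_{i+1/2}-h_{i-1/2})$ together with $f'(x_i)=f'(x_{i\pm1/2})+O(\|\mathbf{u}\|_{W^{4,\infty}}h)$ this becomes exactly $-(\delta^{x,n}_{i+1/2,j+1/2}-\delta^{x,n}_{i-1/2,j+1/2})/h_i$ up to an $O(\|\mathbf{u}\|_{L^\infty(J;W^{4,\infty})}(h^2+k^2))$ remainder.

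The heart of the proof is $A$, which is a one-dimensional statement at fixed $x=x_i$. The key point is that the $-\tfrac{k^2}{6}\partial^2_y u$ built into \eqref{e25} is tuned so that, expanding $\widetilde{u}^{x,n}_{i,m+1/2}$ about $y_{m+1/2}$, the Simpson-type $+\tfrac{k^2}{24}\partial^2_y u$ from the cell average and this correction combine into $\widetilde{u}^{x,n}_{i,m+1/2}=u^{x,n}_{i,m+1/2}-\tfrac{k^2_{m+1/2}}{8}\partial^2_y u^{x,n}_{i,m+1/2}+O(\|\mathbf{u}\|_{W^{4,\infty}}k^4)$, and this effective $-\tfrac{k^2}{8}\partial^2_y u$ then cancels exactly the $+\tfrac{k^2}{8}\partial^2_y u$ that arises from the quadratic Taylor term when $\widetilde{u}^{x,n}_{i,j+3/2}$ and $\widetilde{u}^{x,n}_{i,j+1/2}$ are re-expanded about the common node $y_{j+1}$. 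This cancellation is precisely the removal of the $O(h)$ error intrinsic to the non-uniform centered difference $D_y$, and what survives is $D_y\widetilde{u}^{x,n}_{i,j+1}=\partial_y u^{x,n}_{i,j+1}+\epsilon^{x,y,n}_{i,j+1}+O(\|\mathbf{u}\|_{W^{4,\infty}}k^3)$, the cubic residue being exactly $\epsilon^{x,y,n}_{i,j+1}$ of \eqref{e111}; the rows $j+1\in\{0,N_y\}$ use the extension \eqref{e27} and give the modified $\epsilon^{x,y,n}_{i,0},\epsilon^{x,y,n}_{i,N_y}$. Forming the difference quotient of this relation, noting that $y_{j+1/2}$ \emph{is} the midpoint of $[y_j,y_{j+1}]$ so that $(\partial_y u^{x,n}_{i,j+1}-\partial_y u^{x,n}_{i,j})/k_{j+1/2}=\partial^2_y u^{x,n}_{i,j+1/2}+O(\|\mathbf{u}\|_{W^{4,\infty}}k^2)$ with no mesh-offset term, and using $k_{j+1/2}\ge C_0 k$ to absorb $(O(k^3)-O(k^3))/k_{j+1/2}=O(k^2)$, one obtains $A=(\epsilon^{x,y,n}_{i,j+1}-\epsilon^{x,y,n}_{i,j})/k_{j+1/2}+O(\|\mathbf{u}\|_{L^\infty(J;W^{4,\infty})}k^2)$. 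Adding $A$ and $B$ gives the first line of \eqref{e112}, and the second follows by the $x\leftrightarrow y$ symmetry.

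The main obstacle---and the reason the lemma is not merely a direct Taylor exercise---is that in the $A$-step the $\epsilon^{x,y,n}$ and $\delta^{x,n}$ contributions must be kept in \emph{divided-difference} form rather than bounded pointwise, since pointwise each is only $O(h)$; the missing order is recovered only later in Theorem~\ref{th9}, where the discrete summation by parts of Lemma~\ref{le2} transfers the difference onto the test function. A secondary technical point is the bookkeeping in the boundary rows $j\in\{0,N_y\}$ (and $i\in\{0,N_x\}$ for the $y$-component), where \eqref{e27}--\eqref{e28} replace the interior interpolation and alter the Taylor analysis, yielding the separate formulas in \eqref{e111} for $\epsilon^{x,y,n}_{i,0},\epsilon^{x,y,n}_{i,N_y},\epsilon^{y,x,n}_{0,j},\epsilon^{y,x,n}_{N_x,j}$.
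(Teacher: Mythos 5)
Your proposal is correct and follows essentially the same route as the paper: both split the discrepancy into a $y$-direction difference-quotient error (yielding the $\epsilon^{x,y,n}$ divided difference via the exact cancellation of the $\partial_y^2 u$ terms produced by the $-\tfrac{k^2}{6}\partial_y^2 u$ correction in \eqref{e25}) and an $x$-averaging error (yielding the $-\delta^{x,n}$ divided difference from the offset of $x_i$ in $[x_{i-1/2},x_{i+1/2}]$), the only cosmetic difference being that the paper pivots on $\tfrac{1}{k_{j+1/2}}\int_{y_j}^{y_{j+1}}\partial_y^2u^{x,n}(x_i,y)\,\diff y$ rather than the point value $\partial_y^2u^{x,n}(x_i,y_{j+1/2})$, which differ by $O(k^2)$. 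Your observation that the $\epsilon$ and $\delta$ terms must be retained in divided-difference form for the later summation by parts accurately reflects how the lemma is used in Theorem \ref{th9}.
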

\begin{proof}
	First we consider that
	\begin{equation}
	\aligned
	\frac{D_y\widetilde{u}^{x,n}_{i,j+1} - D_y\widetilde{u}^{x,n}_{i,j}}{k_{j+1/2}} - \frac{1}{k_{j+1/2}}\int_{y_j}^{y_{j+1}}\frac{\partial^2 u^{x,n}}{\partial y^2}(x_i,y)dy.
	\endaligned
	\end{equation}
	By Talor's expansion it is clear that for $(x_i,y_{j+1}) \in \Omega$
	\begin{equation}\label{le8e1}
	\aligned
	D_y\widetilde{u}^{x,n}_{i,j+1} = &\frac{1}{k_{j+1}} (\widetilde{u}^{x,n}_{i,j+3/2} - \widetilde{u}^{x,n}_{i,j+1/2}) \\
	&= \frac{\partial u^{x,n}_{i,j+1}}{\partial y} + \frac{\partial^2 u^{x,n}_{i,j+1}}{\partial y^2} \frac{k_{j+3/2} - k_{j+1/2}}{3} + \frac{\partial^3 u^{x,n}_{i,j+1}}{\partial y^3} \frac{k^3_{j+3/2}+k^3_{j+1/2}}{24k_{j+1}} \\
	&~~~~~~-\frac{\partial^2 u^{x,n}_{i,j+1}}{\partial y^2} \frac{k_{j+3/2} - k_{j+1/2}}{3} - \frac{\partial^3 u^{x,n}_{i,j+1}}{\partial y^3} \frac{k^3_{j+3/2}+k^3_{j+1/2}}{12k_{j+1}}\\
	&~~~~~~+O(\|\mathbf{u}\|_{L^\infty(J;W^{4,\infty}(\Omega))}k^3)\\
	&= \frac{\partial u^{x,n}_{i,j+1}}{\partial y} - \frac{\partial^3 u^{x,n}_{i,j+1}}{\partial y^3} \frac{k^3_{j+3/2}+k^3_{j+1/2}}{24k_{j+1}} + O(\|\mathbf{u}\|_{L^\infty(J;W^{4,\infty}(\Omega))}k^3).
	\endaligned
	\end{equation}
	To consider the case for $j=0$, we have to estimate $D_y\widetilde{u}^{x,n}_{i,0}$. Since $\displaystyle k_0 = \frac{k_{1/2}}{2}$, we have
	\begin{equation}\label{le8e3}
	\aligned
	D_y\widetilde{u}^{x,n}_{i,0} =& \frac{1}{k_0} \left(\widetilde{u}^{x,n}_{i,1/2} - u^{x,n}_{i,0}\right) \\
	=& \frac{1}{k_0}\left(k_0\frac{\partial u^{x,n}_{i,0}}{\partial y}-\frac{k_0^3}{3}\frac{\partial^3 u^{x,n}_{i,0}}{\partial y^3} + O(\|\mathbf{u}\|_{L^\infty(J;W^{4,\infty}(\Omega))}k^4)\right) \\
	=& \frac{\partial u^{x,n}_{i,0}}{\partial y}-\frac{k_{1/2}^2}{12}\frac{\partial^3 u^{x,n}_{i,0}}{\partial y^3} + O(\|\mathbf{u}\|_{L^\infty(J;W^{4,\infty}(\Omega))}k^3).
	\endaligned
	\end{equation}
	Similarly to (\ref{le8e3}), we obtain for $j = N_y$
	\begin{equation}
	D_y\widetilde{u}^{x,n}_{i,N_y} = \frac{\partial u^{x,n}_{i,N_y}}{\partial y}- \frac{k_{N_y-1/2}^2}{12}\frac{\partial^3 u^{x,n}_{i,N_y}}{\partial y^3} + O(\|\mathbf{u}\|_{L^\infty(J;W^{4,\infty}(\Omega))}k^3).
	\end{equation}
	Therefore, we have that
	\begin{equation}\label{lemD1}
	\aligned
	&\frac{D_y\widetilde{u}^{x,n}_{i,j+1} - D_y\widetilde{u}^{x,n}_{i,j}}{k_{j+1/2}} - \frac{1}{k_{j+1/2}}\int_{y_j}^{y_{j+1}}\frac{\partial^2 u^{x,n}}{\partial y^2}(x_i,y)dy \\
	&~~= \frac{\epsilon^{x,y,n}_{i,j+1} - \epsilon^{x,y,n}_{i,j}}{k_{j + 1/2}} + O(\|\mathbf{u}\|_{L^{\infty}(J;W^{4,\infty})}k^2).
	\endaligned
	\end{equation}
	Then we consider that 
	\begin{equation}
	\aligned
	\frac{1}{k_{j+1/2}}\int_{y_j}^{y_{j+1}}\frac{\partial^2 u^{x,n}}{\partial y^2}(x_i,y)dy - \frac{1}{h_i}\int_{x_{i-1/2}}^{x_{i+1/2}}\frac{\partial^2 u^{x,n}}{\partial y^2}(x,y_{j+1/2})dx.
	\endaligned
	\end{equation}
	It is clear that
	\begin{equation}\label{5.21}
	\frac{1}{k_{j+1/2}}\int_{y_j}^{y_{j+1}}\frac{\partial^2 u^{x,n}}{\partial y^2}(x_i,y)dy = \frac{\partial^2 u^{x,n}_{i,j+1/2}}{\partial y^2} + O(\|\mathbf{u}\|_{L^\infty(J;W^{4,\infty}(\Omega))}k^2).
	\end{equation}
	And we have that by Talor's expansion
	\begin{equation}
	\aligned
	\frac{\partial^2 u^{x,n}}{\partial y^2}(x,y_{j+1/2}) =  \frac{\partial^2 u^{x,n}_{i,j+1/2}}{\partial y^2} + \frac{\partial^3 u^{x,n}_{i,j+1/2}}{\partial y^2\partial x}(x - x_i) + O(\|\mathbf{u}\|_{L^\infty(J;W^{4,\infty}(\Omega))}h^2), 
	\endaligned
	\end{equation}
	then
	\begin{equation}\label{5.23}
	\aligned
	\frac{1}{h_i}&\int_{x_{i-1/2}}^{x_{i+1/2}}\frac{\partial^2 u^{x,n}}{\partial y^2}(x,y_{j+1/2})dx \\
	&= \frac{\partial^2 u^{x,n}_{i,j+1/2}}{\partial y^2} + \frac{1}{8h_i}(\frac{\partial^3 u^{x,n}_{i,j+1/2}}{\partial y^2\partial x}h^2_{i+1/2} - \frac{\partial^3 u^{x,n}_{i,j+1/2}}{\partial y^2\partial x}h^2_{i-1/2}) + O(\|\mathbf{u}\|_{L^\infty(J;W^{4,\infty}(\Omega))}h^2)\\
	&= \frac{\partial^2 u^{x,n}_{i,j+1/2}}{\partial y^2} + \frac{1}{8h_i}(\frac{\partial^3 u^{x,n}_{i+ 1/2,j+1/2}}{\partial y^2\partial x}h^2_{i+1/2} - \frac{\partial^3 u^{x,n}_{i-1/2,j+1/2}}{\partial y^2\partial x}h^2_{i-1/2}) + O(\|\mathbf{u}\|_{L^\infty(J;W^{4,\infty}(\Omega))}h^2)\\
	&=\frac{\partial^2 u^{x,n}_{i,j+1/2}}{\partial y^2} + \frac{\delta^{x,n}_{i+1/2,j+1/2} - \delta^{x,n}_{i-1/2,j+1/2}}{h_i} + O(\|\mathbf{u}\|_{L^\infty(J;W^{4,\infty}(\Omega))}h^2)
	\endaligned
	\end{equation}
	Combining (\ref{5.21}) and (\ref{5.23}) we obatin that
	\begin{equation}
	\aligned
	&\frac{1}{k_{j+1/2}}\int_{y_j}^{y_{j+1}}\frac{\partial^2 u^{x,n}}{\partial y^2}(x_i,y)dy - \frac{1}{h_i}\int_{x_{i-1/2}}^{x_{i+1/2}}\frac{\partial^2 u^{x,n}}{\partial y^2}(x,y_{j+1/2})dx\\
	&~~~~~~~~= -\frac{\delta^{x,n}_{i+1/2,j+1/2} - \delta^{x,n}_{i-1/2,j+1/2}}{h_i} + O(\|\mathbf{u}\|_{L^\infty(J;W^{4,\infty}(\Omega))}(h^2 + k^2)),
	\endaligned
	\end{equation}
	thus we have that
	\begin{equation}
	\aligned
	&\frac{D_y\widetilde{u}^{x,n}_{i,j+1} - D_y\widetilde{u}^{x,n}_{i,j}}{k_{j+1/2}} - \frac{1}{h_i}\int_{x_{i-1/2}}^{x_{i+1/2}}\frac{\partial^2 u^{x,n}}{\partial y^2}(x,y_{j+1/2})dx \\ 
	&~~~~~~~~= -\frac{\delta^{x,n}_{i+1/2,j+1/2} - \delta^{x,n}_{i-1/2,j+1/2}}{h_i} + \frac{\epsilon^{x,y,n}_{i,j+1} - \epsilon^{x,y,n}_{i,j}}{k_{j + 1/2}} + O(\|\mathbf{u}\|_{L^{\infty}(J;W^{4,\infty})}(h^2+k^2)).
	\endaligned
	\end{equation}
	Similarly, the same conclusion can be obtained for $y$-direction.
\end{proof}
\section{ $l^{\infty}(l^{\infty})$ error analysis on uniform grids}
Inspired by \cite{dong2023second,hou1993second,strang1964accurate}, we use the Strang's trick to obtain the main lemma of high-order consistency on uniform grids. 
\begin{lem}\label{final}
	Assume the solution of Stokes equations is sufficiently smooth. There exist error expansions $\hat{\mathbf{u}}$ and $\hat{p}$, which are $O(h^2)$ perturbations of $\mathbf{u}$ (the exact solusion) with $\Delta t = O(h^2)$:
	 \begin{equation}
	 	\aligned
	 	\hat{\mathbf{u}} &= \mathbf{u} + h^2\tilde{\mathbf{u}}_1 + h^4\tilde{\mathbf{u}}_2, \\
	 	\hat{p} &= h^2\tilde{p}_1 + h^4\tilde{p}_2,
	 	\endaligned
	 \end{equation}
	 where the function $\tilde{\mathbf{u}}$ and $\tilde{p}$ are smooth and their derivatives can be bounded in terms of $\mathbf{u}$ and its derivatives. These functions satisfy the discrete equations to a high order of accuracy in the following sense:
	 \begin{align}
	 &d_{t}{\hat{u}}^{x,n}_{i,j+1/2}-\mu \frac{d_x\hat{u}^{x,n}_{i+1/2,j+1/2}-d_x\hat{u}^{x,n}_{i-1/2,j+1/2}}{h_i}
	 -\mu \frac{D_y\hat{u}^{x,n}_{i,j+1}-D_y\hat{u}^{x,n}_{i,j}}{k_{j+1/2}}\notag\\
	 &~~~~~+D_x\hat{p}^{n}_{i,j+1/2}=\hat{\beta}^{x,n}_{i,j+1/2} + O(h^6),\ \ 1\leq i\leq N_x-1,0\leq j\leq N_y-1,\\
	 &d_{t}{\hat{u}}^{y,n}_{i+1/2,j}-\mu \frac{D_x\hat{u}^{y,n}_{i+1,j}-D_x\hat{u}^{y,n}_{i,j}}{h_{i+1/2}}
	 -\mu \frac{d_y\hat{u}^{y,n}_{i+1/2,j+1/2}-d_y\hat{u}^{y,n}_{i+1/2,j-1/2}}{k_{j}}\notag\\
	 &~~~~~+D_y\hat{p}^{n}_{i+1/2,j}=\hat{\beta}^{y,n}_{i+1/2,j} + O(h^6),\ \ 0\leq i\leq N_x-1,1\leq j\leq N_y-1,\\
	 &d_x\hat{u}^{x,n}_{i+1/2,j+1/2}+d_y\hat{u}^{y,n}_{i+1/2,j+1/2}=O(h^4),\ \ 0\leq i\leq N_x-1,0\leq j\leq N_y-1, \\
	 &\hat{u}^{x,n}_{i,-1/2} + \hat{u}^{x,n}_{i,1/2} = O(h^6),~~\hat{u}^{x,n}_{i,N_y+1/2} + \hat{u}^{x,n}_{i,N_y-1/2} = O(h^6)\\
	 &\hat{u}^{y,n}_{-1/2,j} + \hat{u}^{y,n}_{1/2,j} = O(h^6),~~\hat{u}^{y,n}_{N_x+1/2,j} + \hat{u}^{y,n}_{N_y-1/2,j} = O(h^6),
	 \end{align}
	 with $\hat{\mathbf{u}}^0 = 0$.
\end{lem}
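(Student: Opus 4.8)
The plan is to use the asymptotic error expansion (``Strang's trick''): we successively build smooth corrector fields that annihilate the leading discrete truncation errors order by order in $h^{2}$, the residual at each stage being governed by an auxiliary continuous Stokes problem whose solvability and regularity furnish the stated bounds. Throughout we exploit that the mesh is uniform and $\Delta t=O(h^{2})$, so that the centred difference quotients $d_x(d_x\cdot)$, $D_y(D_y\cdot)$, $D_x(\cdot)$, the cell averages defining $\hat{\beta}$, and $d_t$ ($=\partial_t-\tfrac{\Delta t}{2}\partial_{tt}+O(h^{4})$) are all second-order accurate with expansions in \emph{even} powers of $h$. Substituting the exact pair $(\mathbf{u},0)$ (recall $p^{(1)}=0$ in the splitting) into the operators of \eqref{dise1}--\eqref{dise3} and Taylor-expanding on the uniform grid, and using $\partial_t\mathbf{u}-\mu\Delta\mathbf{u}=\beta$ with $\beta=\partial_t\mathbf{u}-\mu\Delta\mathbf{u}$ and $\nabla\cdot\mathbf{u}=0$, one obtains residuals $h^{2}\mathbf{T}_1+O(h^{4})$ in the two momentum equations, $h^{2}d_1+O(h^{4})$ in the discrete divergence with $d_1=\tfrac1{24}(\partial_x^3u^x+\partial_y^3u^y)$, and $h^{2}\mathbf{b}_1+O(h^{4})$ in the reflection identities of the last two lines (e.g.\ $b_1^{x}=\tfrac14\partial_{yy}u^x$ on the bottom wall), where $\mathbf{T}_1,d_1,\mathbf{b}_1$ are explicit smooth fields depending on derivatives of $\mathbf{u}$ and on the fixed ratio $\Delta t/h^{2}$.

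Next I would define $(\tilde{\mathbf{u}}_1,\tilde p_1)$ as the solution of the time-dependent Stokes problem
\[
\partial_t\tilde{\mathbf{u}}_1-\mu\Delta\tilde{\mathbf{u}}_1+\nabla\tilde p_1=-\mathbf{T}_1,\qquad
\nabla\cdot\tilde{\mathbf{u}}_1=-d_1\quad\text{in }\Omega\times J,
\]
with $\tilde{\mathbf{u}}_1(\cdot,0)=0$, zero-mean $\tilde p_1$, and Dirichlet trace prescribed to cancel $\mathbf{b}_1$ (so $\tilde u_1^{x}=0$ on the vertical walls and $\tilde u_1^{x}=-\tfrac18\partial_{yy}u^x$ on the horizontal walls, and symmetrically for $\tilde u_1^{y}$). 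The solvability constraint $\int_\Omega\nabla\cdot\tilde{\mathbf{u}}_1=\int_{\partial\Omega}\tilde{\mathbf{u}}_1\!\cdot\!\mathbf{n}$ holds because this trace has zero normal component on each wall while $\int_\Omega d_1=0$ follows from incompressibility together with $\mathbf{u}|_{\partial\Omega}=0$ (a one-line integration by parts, or the discrete Gauss identity applied to $\mathbf{u}$); the traces also agree at the corners since $\partial_{yy}u^x$ and $\partial_{xx}u^y$ vanish there. Parabolic regularity then yields $\tilde{\mathbf{u}}_1,\tilde p_1$ smooth, with derivatives controlled by finitely many derivatives of $\mathbf{u}$.

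Putting $\hat{\mathbf{u}}^{(1)}=\mathbf{u}+h^{2}\tilde{\mathbf{u}}_1$, $\hat p^{(1)}=h^{2}\tilde p_1$ and re-expanding — using that on the uniform grid the discrete operators act on the smooth field $h^{2}\tilde{\mathbf{u}}_1$ as $h^{2}$ times the continuous operators plus $O(h^{4})$ — the momentum residual drops to $h^{4}\mathbf{T}_2+O(h^{6})$, the discrete divergence to $O(h^{4})$, and the reflection residuals to $h^{4}\mathbf{b}_2+O(h^{6})$, with $\mathbf{T}_2,\mathbf{b}_2$ again smooth and controlled by $\mathbf{u}$. Repeating once more with $(\tilde{\mathbf{u}}_2,\tilde p_2)$ solving the analogous Stokes problem with data $(-\mathbf{T}_2,\,0,\,\mathbf{b}_2,\,0)$ (any divergence is admissible here since the target for the divergence is only $O(h^{4})$) and setting $\hat{\mathbf{u}}=\mathbf{u}+h^{2}\tilde{\mathbf{u}}_1+h^{4}\tilde{\mathbf{u}}_2$, $\hat p=h^{2}\tilde p_1+h^{4}\tilde p_2$ makes the momentum and boundary residuals $O(h^{6})$ while the divergence stays $O(h^{4})$ and $\hat{\mathbf{u}}^{0}=0$; since moreover $\hat{\mathbf{u}}-\mathbf{u}=O(h^{2})$ and $\hat p=O(h^{2})$, this is precisely the assertion. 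Exactly two corrector levels are needed because the demanded accuracies $O(h^{6})$ (momentum and boundary) and $O(h^{4})$ (divergence) correspond to $h^{4}\!\cdot\!h^{2}$ and $h^{2}\!\cdot\!h^{2}$ respectively.

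The main obstacle is the boundary bookkeeping: one must simultaneously match the reflection-type residuals at all four walls with the correct $h^{2}$- and $h^{4}$-coefficients, keep the Dirichlet data of the two correctors mutually consistent at the corners, and respect the flux constraint $\int_\Omega\nabla\cdot\tilde{\mathbf{u}}_k=\int_{\partial\Omega}\tilde{\mathbf{u}}_k\!\cdot\!\mathbf{n}$ for each inhomogeneous-divergence Stokes problem — all of which reduce to identities forced by $\mathbf{u}\equiv0$ on $\partial\Omega$ together with incompressibility, but require careful verification. A secondary point is checking the parabolic compatibility conditions at $t=0$ (guaranteed by the homogeneous initial data $\mathbf{u}(\cdot,0)=0$) so the correctors are smooth up to the initial time, and tracking that each corrector consumes only a bounded number of derivatives of $\mathbf{u}$, which is why the ``sufficiently smooth'' hypothesis is imposed.
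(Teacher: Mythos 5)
Your proposal is correct and follows exactly the approach the paper invokes: the paper gives no written proof of this lemma at all, merely attributing it to ``Strang's trick'' with citations to Strang (1964), Hou--Wetton, and Dong--Ying, and your asymptotic error expansion with two corrector levels solving auxiliary time-dependent Stokes problems (with the divergence, boundary-trace, and corner compatibility checks you identify) is precisely that technique, worked out in the detail the paper omits.
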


By combining the consistency lemma \ref{final} with the preceding non-uniform grids error analysis and inverse inequalities, we rigorously derive the spatial $l^\infty$ norm estimates on uniform grids.
\bibliographystyle{siamplain}
\bibliography{Stokes_Robustness}

\end{document}